\documentclass[12pt]{amsart}

\usepackage{amsthm, amsmath, amssymb, amsrefs, color}

\usepackage{thmtools, thm-restate}
\declaretheorem[numberwithin=section]{theorem}

\usepackage{fullpage}

\usepackage{hyperref}

\newcommand{\T}{\mathbb{T}} 
\newcommand{\C}{\mathbb{C}} 
\newcommand{\D}{\mathbb{D}} 
\newcommand{\ints}{\mathbb{Z}} 
\newcommand{\uhp}{\mathbb{H}}
\newcommand{\R}{\mathbb{R}}

\newcommand{\p}{\mathfrak{p}}
\newcommand{\mcI}{\mathcal{I}}
\renewcommand{\Im}{\mathrm{Im}}
\newcommand{\refl}[1]{\bar{#1}}

\numberwithin{equation}{section}

\setcounter{tocdepth}{1}

\newif\ifletter

\letterfalse

\newtheorem{prop}[theorem]{Proposition}

\newtheorem{lemma}[theorem]{Lemma}

\theoremstyle{definition}

\newtheorem{remark}[theorem]{Remark}
\newtheorem{example}[theorem]{Example}

\keywords{Stable polynomials, Schur functions, real stable polynomials, 
rational inner functions, rational singularities,
local integrability, local boundedness, derivative integrability,
hyperbolic polynomials, Puiseux series, de Branges spaces}
 \subjclass[2020]{32A40, 13H10, 14H45}

\address{Washington University in St. Louis\\ Department of Mathematics\\ One Brookings Drive\\ St. Louis, MO 63130 USA}

\email{geknese@wustl.edu}

\title{Boundary local Integrability of Rational Functions in two variables}

\author{Greg Knese}

\date{\today}

\thanks{Partially supported by NSF grants DMS-1900816 and DMS-2247702}

\begin{document}

\maketitle

\begin{abstract}
Motivated by studying boundary singularities of rational
functions in two variables that are analytic
on a domain, we investigate local integrability
on $\R^2$ near $(0,0)$
of rational functions with denominator non-vanishing
in the bi-upper half-plane but with an isolated
zero (with respect to $\R^2$) at the origin.
Building on work of Bickel-Pascoe-Sola \cites{BPS18,BPS20},
we give a necessary and sufficient test for membership
in a local $L^{\p}(\R^2)$ space and we give a complete description
of all numerators $Q$ such that $Q/P$ is locally
in a given $L^{\p}$ space.  
As applications, we prove that every bounded
rational function on the bidisk has partial derivatives
belonging to $L^1$ on the two-torus.
In addition, we give a new proof of a conjecture started
in \cite{BKPS} and completed by Koll\'ar \cite{kollar} 
characterizing the ideal of $Q$ such
that $Q/P$ is locally bounded.
A larger takeaway from this work is that a local model
for stable polynomials we employ is a flexible tool and may be
of use for other local questions about stable polynomials.  
\end{abstract}

\tableofcontents

\section{Introduction}

This paper continues the study of boundary singularities of rational
functions in several complex variables.  
Our starting point is a rational function $Q(z,w)/P(z,w)$
where $P$ has no zeros in the bidisk $\D^2 = \{(z,w) \in \C^2: |z|,|w|<1\}$ yet $P$ may have zeros on the boundary, say
$P(1,1)=0$.  
There are many ways to study the boundary
singularity and how its nature affects
the related rational function.
One way to understand the singularity at $(1,1)$
is to determine the integrability of $Q/P$ on 
the two-torus $\T^2 = \{(z,w)\in \C^2: |z|=|w|=1\}$.  
In particular, we can ask if $Q/P$ belongs to $L^2(\T^2)$ or
some other $L^{\p}$ space.  The question of when
$Q/P$ is bounded was addressed in the works \cites{BKPS, kollar}, but 
we give a new approach to this aspect here as well.

Our interest in this question originated
from the study of stable polynomials.
Stable polynomials are polynomials
with no zeros on either $\D^d$ or the product of upper
half planes $\uhp^d$.
They appear in many different areas
of mathematics  (see the introduction of \cite{BKPS} for references).
One could of course study polynomials with no zeros
on some other domain in $\C^d$, 
however the reflection across the circle or real line
makes polynomials with no zeros on $\D^d$ or $\uhp^d$
especially tractable.  Applications to combinatorics and 
other areas have most likely emphasized product domains because
of a necessary independence of the variables involved.
(That said, it would certainly be interesting to
develop a theory for other domains---the method of using Puiseux series
in this paper and \cite{BKPS} offers some promise for domains in $\C^2$.)
There is an interesting connection between
stable polynomials and sums of squares formulas associated to them
where a natural issue of membership in $L^2$ arises; \cites{gKpnozb, gKintreg}.  
We discuss this connection in greater detail in Section \ref{sec:sos}.  
Understanding integrability and boundedness of rational functions
on $\D^2$ then naturally leads to the study of
local properties of stable polynomials around a boundary zero.
The paper \cite{BKPS} presented a detailed local description 
near a boundary zero of stable polynomials in two variables.
One outcome of that work is that for some 
questions our initial stable polynomial $P$ can be replaced
by a simpler polynomial $[P]$ with a property weaker than being stable; 
see Theorem \ref{brackthm} below.
One big takeaway from the \emph{present} work is that this local model
is surprisingly robust.
Our specific interpretation of robust here is that much of the local structure of
the real and imaginary parts of $P$ surprisingly carries over to the real and imaginary 
parts of $[P]$.
At a more practical level, we learn about membership
in some classical function spaces.
Membership of an analytic function in one of the boundary
$L^{\p}$ spaces is really a question about membership 
in a Hardy space $H^{\p}(\D^d)$.  For $\p \in (0,\infty]$, the Hardy space
on the polydisk $H^{\p}(\D^d)$ consists of analytic functions $f:\D^d\to \C$
on the polydisk satisfying
\[
\|f\|_{H^{\p}} := \sup_{0<r<1} \|f_r\|_{L^{\p}(\T^d)} <\infty
\]
where $f_r(z):= f(rz)$.
The Hardy spaces on the disk and polydisk have 
been studied for decades (see the books \cite{Rudin} and \cite{Dirichletbook}) 
and here we try to 
simply understand which rational functions belong
to these spaces---this seems to only
recently have come up as a question, as natural
as it is. 
It should be emphasized that in \cite{Dirichletbook} many of the
important motivating questions (related to analysis
of Dirichlet series) require the use of $H^{\p}$ spaces
beyond the standard cases of $\p=1,2,\infty$.

In one variable, there is
not much to discuss while in several variables
we can have competing boundary zeros in the numerator
and denominator of a rational function.
It is natural to apply conformal maps in order
to study the problem more locally on a flat surface.  
If we map $(\R\cup\{\infty\})^2 \to \T^2$ and $(0,0)$ to $(1,1)$
via
\[
(x,y) \mapsto (z,w) = \left(\frac{1+ix}{1-ix}, \frac{1+iy}{1-iy}\right)
\]
then we can switch to studying a rational
function $Q/P$, where $P$ has no zeros in $\uhp^2 = \{(x,y): \Im (x), \Im (y) >0\}$, 
the bi-upper half plane, yet $P(0,0)=0$.
Now we ask if $Q/P$ is locally in $L^{\p}$
in the sense of being $L^{\p}$-integrable restricted to a neighborhood of $(0,0)$ 
in $\R^2$; 
 when this happens we write $Q/P \in L^{\p}_{loc}$.  
The approach taken here is to fix $P$ and then characterize
all $Q$ such that $Q/P \in L^{\p}_{loc}$.
In this paper, we give a complete solution to this problem in the
case of an isolated zero with respect to $\R^2$.
We are able to obtain a necessary and sufficient test for
membership in $L^{\p}_{loc}$ (Theorem \ref{finalint})
as well as a complete description of all $Q$
with $Q/P \in L^{\p}_{loc}$ (Theorem \ref{uberthm}).
The case of a boundary curve of zeros is interesting
but we do not pursue it here (see Remark \ref{rem:curve}).
We employ many of the results from \cite{BKPS} on a
local theory of stable polynomials; however, some 
novel extensions of this theory are also required
and developed here (Section \ref{sec:locface}). 
We do not go beyond two variables in this paper.  
The main obstructions to a theory in more variables is the
inapplicability of Puiseux series and B\'ezout's theorem 
as well as a more technical fact 
that certain stable polynomials (the \emph{real} stable 
polynomials) in two variables have analytic branches through a 
singularity (see the
related result Theorem \ref{facerealbranches}) while
this result does not carry over to three or more variables.
See \cite{BPSAJM} for a study of various things
that can happen in higher variables.

The objects of study here bear some similarities with 
log-canonical thresholds and multiplier ideals in algebraic 
geometry but the nature and focus of our results do not seem to connect 
directly with this theory (see \cite{LCTsurvey}).  
Motivated by the study
of oscillatory integrals there are also
a variety of results on studying integrability of inverses
of real analytic functions \cites{PS, Collins}, real analytic ratios \cite{Pramanik},
 or inverses of smooth functions \cite{Greenblatt}
 using Newton diagrams associated to the functions.
While the situation we study is more special
in some ways, we are also able to obtain an essentially
complete understanding; not just characterizing 
integrability but also obtaining precise descriptions of functions with
prescribed integrability.  The approach we follow is not 
short but it does not involve any hard analysis 
of breaking up a neighborhood of $(0,0)$ in $\R^2$ 
into regions where our function has 
certain behavior.  Our softer approach leads to 
simple algebraic conditions for integrability.

\subsection{First Example}

It is helpful to begin with the simplest non-trivial
example.

\begin{example} \label{favex}
Consider $P(z,w) = 2-z-w$ which has no zeros in $\D^2$ but a zero at $(1,1)$.
Given $Q \in \C[z,w]$, we ask when is $\frac{Q(z,w)}{P(z,w)}$ in $L^\p(\T^2)$?  The answer in this case
turns out to be relatively simple. For $\p < 3/2$ this is automatic, 
for $3/2\leq \p < 3$, we merely need $Q(1,1) = 0$, 
while for $\p\geq 3$ we need the additional first order condition $\frac{\partial Q}{\partial z}(1,1) = \frac{\partial Q}{\partial w}(1,1)$.
To get some sense why, we switch to the upper half plane setting via $z = \frac{1+ix}{1-ix}$, $w = \frac{1+iy}{1-iy}$ and clear
denominators to get
\[
(1-ix)(1-iy)P(z(x),w(y)) = -2i(x+y - i2xy)
\]
and it is enough to consider the local $L^{\p}$ integrability of rational functions with
the denominator $x+y-2ixy$.   
This polynomial can be written in terms of a parametrization of its zero set
\[
x+y-2ixy = (1-2ix)\left(y + \frac{x}{1-2ix}\right) =(1-2ix)(y+ x + i2x^2 + \text{ higher order})
\]
The factor $1-2ix$ does not vanish at $0$, so it is unimportant.  
The main factor is bounded above and below by $y+x+2ix^2$.
Thus, when analyzing integrability we can study
\[
\int_{(-\epsilon, \epsilon)^2} \frac{|Q(x,y)|^{\p}}{|y+x+2ix^2|^{\p}} dx dy \approx
\int_{(-\epsilon, \epsilon)^2} \frac{|Q(x,y-x)|^{\p}}{|y+2ix^2|^{\p}} dx dy
\]
by employing the transformation $y \mapsto y-x$.  Here $\approx$ just means both sides
are finite or both are infinite.  (Note also that the change of variables to the upper half
plane
introduces factors that are non-vanishing at $0$, so they can be disregarded 
for this problem.)
To proceed
we write $Q(x,y-x) = h(x,y) = h_0(x) + y h_1(x,y)$.  
Since $\frac{y}{y+2ix^2}$ is bounded on $\R^2\setminus \{(0,0)\}$, we need only consider $h_0(x)$
and its order of vanishing.
In effect we are only considering the
order of vanishing of $Q(x,-x)$.

The rest can be accomplished with the technical fact that
\[
\frac{x^{j}}{y+2ix^2} \in L^{\p}_{loc}
\]
if and only if $j > 2 - \frac{3}{\p}$.  For $\p < 3/2$, we have $2-3/\p<0$,
so the condition is $j \geq 0$ in this range (i.e.\ vacuous).
For $3/2\leq \p < 3$, we have $0\leq 2-3/\p < 1$, so the condition is $j \geq 1$ (i.e. $Q(0,0)=0$).
For $\p \geq 3$, the condition is $j \geq 2$, which amounts to 
\[
0=\left.\frac{d}{dx} Q(x,-x)\right|_{x=0} = \frac{\partial Q}{\partial x}(0,0) - \frac{\partial Q}{\partial y}(0,0) = 0. \qquad\qquad \diamond
\]
  \end{example}
This example outlines some aspects of our approach, however
the analysis becomes much more interesting when $P$
has multiple branches with
various orders of contact between 
distinct branches.

\subsection{Main Theorems}

An important first step in our approach is that we can
replace our polynomial $P$ with a simpler polynomial.
In the example above
we replaced $x+y-2ixy$ with $y+x+2ix^2$
and something similar can be done in general.
First, we define
\[
\refl{P}(x,y) = \overline{P(\bar{x},\bar{y})}
\]
which makes sense for any two variable polynomial.

In the following theorem we assume $P$ and $\refl{P}$
have no common factors---this
puts us into the situation of
finitely many (hence isolated) zeros in $\R^2$.
The case of boundary zeros filling out a curve in $\R^2$
is again interesting but not the focus
here. See Remark \ref{rem:curve} below.

The following was proven in \cite{BKPS}.

\begin{restatable}{theorem}{brackthm}[Theorem 1.2 \cite{BKPS}] \label{brackthm}
Let $P(x,y) \in \C[x,y]$ have no zeros in $\uhp^2$ and no 
common factors with $\refl{P}$.
Suppose $P$ vanishes to order $M$ at $(0,0)$.
Then there exist natural numbers $L_1,\dots, L_M \geq 1$
and real coefficient polynomials 
$q_1(x),\dots, q_M(x) \in \R[x]$
satisfying
\begin{itemize}
\item $q_j(0)=0$
\item $q_j'(0) > 0$
\item $\deg q_j(x) < 2L_j$ 
\end{itemize}
for $j=1,\dots, M$
such that if we define
\begin{equation} \label{pbrack}
[P](x,y) = \prod_{j=1}^{M} (y+ q_j(x) + i x^{2L_j})
\end{equation}
then
\[
P/[P] \text{ and } [P]/P
\]
are bounded in a punctured neighborhood of 
$(0,0)$ in $\R^2$.
\end{restatable}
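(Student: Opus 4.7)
The plan is to obtain a local factorization of $P$ by combining Weierstrass preparation with Puiseux's theorem, and then to use $\uhp^2$-stability together with the absence of common factors between $P$ and $\refl{P}$ to pin down the structure of each branch.

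I would first apply Weierstrass preparation at $(0,0)$ to write $P(x,y) = U(x,y) W(x,y)$, where $U$ is a unit in the local ring of convergent power series at $(0,0)$ (hence bounded above and below on a small neighborhood) and $W$ is distinguished as a polynomial in $y$ of degree $M$. Since $U$ contributes a bounded multiplicative factor it may be discarded. Puiseux's theorem then factors $W$ over the convergent Puiseux series as $W(x,y) = \prod_{j=1}^M (y - \phi_j(x))$ with $\phi_j(0) = 0$. The hypothesis that $P$ and $\refl{P}$ share no common factors forces every $\phi_j$ to be non-real, since a real branch would give $y - \phi_j$ as a common divisor of $W$ and $\refl{W}$.

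Next, I would extract the shape of each branch from $\uhp^2$-stability. For $x$ varying in a small real interval around $0$, $\Im \phi_j(x)$ must have a definite sign on a punctured neighborhood: if $\Im \phi_j(x_0) > 0$ for some real $x_0$ near $0$, then moving $x_0$ slightly into $\uhp$ would by continuity keep $\phi_j(x)$ in $\uhp$, producing a zero of $P$ inside $\uhp^2$. A definite-sign imaginary part through $x = 0$ forces the leading nonzero imaginary term of $\phi_j$ to have an even integer exponent $2L_j \geq 2$ with negative coefficient, while the terms of lower order must be real with integer exponent and assemble into a polynomial $-q_j(x) \in \R[x]$ of degree less than $2L_j$; this step uses that a Puiseux term with fractional exponent or complex coefficient at an exponent below $2L_j$ would cause $\Im \phi_j$ to change sign along the real axis through $0$. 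Applying the same stability-driven argument to the linear behavior---perturbing $x$ along the imaginary axis and using that $\phi_j(x)$ cannot enter $\uhp$---yields $q_j(0)=0$ and $q_j'(0) > 0$.

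To finish, I would verify that with $[P](x,y) := \prod_j (y + q_j(x) + i x^{2L_j})$, both $P/[P]$ and $[P]/P$ are bounded on a punctured real neighborhood of $(0,0)$. Factorwise, write $\phi_j(x) = -q_j(x) - i c_j x^{2L_j} + r_j(x)$ with $c_j > 0$ and $r_j(x) = o(x^{2L_j})$, so that for real $(x,y)$
\[
\frac{y - \phi_j(x)}{y + q_j(x) + i x^{2L_j}} = \frac{A + i c_j B - r_j(x)}{A + i B}
\]
with $A = y + q_j(x)$ and $B = x^{2L_j}$. The leading part is a bounded M\"obius-like function of $A/B$ (the positive constant $c_j$ contributes only a bounded factor), and the remainder satisfies $|r_j(x)|/|A+iB| \lesssim |r_j(x)|/|B| = o(1)$ since $|A + iB| \geq |B| = |x|^{2L_j}$. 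The main obstacle, and the real content of the argument, is this last verification: one must control the higher-order remainder $r_j$ (which may contain genuinely fractional-exponent and complex-coefficient terms) uniformly on a full $\R^2$-neighborhood and not merely along the branch curves, and must guarantee the necessary cross-factor cancellations so that the product over $j$ inherits a uniform two-sided bound.
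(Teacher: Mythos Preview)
This theorem is not proved in the paper; it is cited from \cite{BKPS}. The paper does, however, prove the closely related Theorem~\ref{facebranches} (the $\R\times\uhp$ analogue, which drops the conclusion $q_j'(0)>0$), and your outline matches that approach: Weierstrass preparation, Puiseux factorization, then a branch-by-branch analysis showing each branch has the required form. What you sketch informally as ``a Puiseux term with fractional exponent or complex coefficient below $2L_j$ would cause $\Im\phi_j$ to change sign'' is made precise in the paper as Proposition~\ref{locruhp}, proved via an arithmetic-progression argument (Lemma~\ref{proglemma}) tracking which exponents $k$ allow $a_k t^k\in\overline{\uhp}$ for all $t$ with $t^N\in\R$.

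Two small corrections to your boundedness paragraph. First, the decomposition $\phi_j(x)=-q_j(x)-ic_jx^{2L_j}+r_j(x)$ with $r_j(x)=o(x^{2L_j})$ is not generally available: the branch can carry a nonzero \emph{real} coefficient at order exactly $2L_j$, which fits neither into $q_j$ (degree $<2L_j$) nor into the purely imaginary $ic_jx^{2L_j}$ term. The correct form is $\phi_j(x)=-q_j(x)-x^{2L_j}\psi_j(x^{1/k})$ with $\Im\psi_j(0)=c_j>0$ but $\Re\psi_j(0)$ possibly nonzero. Your argument survives this: with $\psi_j(0)=a_j+ic_j$ the ratio becomes $\frac{(A+a_jB)+ic_jB+o(B)}{A+iB}$, and $\frac{(t+a_j)^2+c_j^2}{t^2+1}$ is bounded above and below uniformly in $t=A/B\in\R$. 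Second, your closing worry about ``cross-factor cancellations'' is unfounded: once each individual factor ratio is bounded above and below on the punctured real neighborhood, the finite product is too, with no interaction between branches required.
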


Because of this theorem, we can replace $P$
which might be irreducible or even locally irreducible
with $[P]$, a product of simple factors
that capture the relevant geometry of the zero
set of $P$ near $(0,0)$.

\begin{restatable}{definition}{IP} \label{IP} 
For $\p \in [1,\infty]$, we define the ideal
\[
\mathcal{I}^\p_{P} = \{Q \in \C\{x,y\}: Q/P \in L^\p_{loc}(0)\}.
\]
Here $\C\{x,y\}$ is the set of convergent power series in $(x,y)$ 
centered at $(0,0)$ and $L^{\p}_{loc}(0)$ refers to the locally $L^{\p}$
functions; namely, those $f$ defined on a neighborhood $U$ of $(0,0)$ in $\R^2$ which  
belong to $L^{\p}(U)$ with respect to Lebesgue measure.  
We will often write $L^{\p}_{loc} = L^{\p}_{loc}(0)$ since
the ``$0$'' will be implied throughout.
\end{restatable}

Note that $\mathcal{I}^{\p}_{P} = \mathcal{I}^{\p}_{[P]}$ by Theorem \ref{brackthm}.

Our first main theorem is a complete characterization of the elements
of $\mcI_{P}^{\p}$.  To state it we need Definitions \ref{Onq} and \ref{Oij}.

\begin{restatable}{definition}{Onq}  \label{Onq}
For any $Q(x,y) \in \C\{x,y\}$, $q(x)\in \C[x]$, and $n \in \mathbb{N}$, let 
\[
O(n,q,Q)
\]
denote the highest power $k$ such that $x^k$ divides $Q(x, x^ny - q(x))$.
\end{restatable}

\begin{example} \label{Onqex}
For example, if $Q = (y + x + x^2 + i x^4)(y+x +i x^4)$
then
\[
O(3, x, Q(x,y)) = 5
\]
since $Q(x, x^3 y - x) = (x^3y + x^2 + i x^4)(x^3y + i x^4)$ 
is divisible by $x^5$.
\end{example}

\begin{restatable}{definition}{Oij} \label{Oij}
Given data $L_1,\dots, L_M \in \mathbb{N}$, $q_1(x),\dots, q_M(x) \in \R[x]$
we define
\[
O_{ij} = \min\{ \text{Ord}(q_j(x)-q_i(x)), 2L_j, 2L_i\}.
\]
Here ``Ord'' denotes the order of vanishing of a 
one variable polynomial (or analytic function) at the origin. 
In particular, $O_{jj} = 2L_j$.
\end{restatable}

Here is our characterization of integrability.

\begin{restatable}{theorem}{finalint} \label{finalint}
Let 
\begin{equation} \label{charP}
P(x,y) = \prod_{j=1}^{M} (y+q_j(x)+ i x^{2L_j})
\end{equation}
where $L_1,\dots, L_M \in \mathbb{N}$ and $q_1,\dots, q_M \in \R[x]$ with 
$\deg q_j < 2L_j$, $q_j(0)=0$. Let $1\leq \p < \infty$ and $Q \in \C\{x,y\}$.  
Then, $Q \in \mcI_{P}^{\p}$
if and only if for $j=1,\dots, M$,
\[
O(2L_j, q_j, Q) \geq \sum_{i=1}^{M} O_{ij} - \left\lceil \frac{2L_j+1}{\p} \right\rceil +1
\]
\end{restatable}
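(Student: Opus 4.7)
The plan is to reduce the local $L^\p$ integrability of $Q/P$ to a one-variable power integral via a change of variables adapted to each branch. For each $j \in \{1,\dots,M\}$, consider $\Phi_j(x,u) := (x,\, x^{2L_j} u - q_j(x))$, whose Jacobian determinant is $x^{2L_j}$. The $j$-th factor of $P$ pulls back to $x^{2L_j}(u+i)$, bounded above and below by positive constants times $x^{2L_j}$ on any bounded $u$-set. For $i \ne j$ the $i$-th factor pulls back to $x^{2L_j} u + (q_i(x) - q_j(x)) + i\, x^{2L_i}$; each summand is divisible by $x^{O_{ij}}$ by the definition of $O_{ij}$, so the factor equals $x^{O_{ij}} \phi_{ij}(x,u)$ with $\phi_{ij}$ analytic and $\phi_{ij}(0,u) \not\equiv 0$. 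Therefore
\[
P \circ \Phi_j(x,u) = x^{S_j}(u+i) \prod_{i\ne j} \phi_{ij}(x,u), \qquad S_j := \sum_{i=1}^{M} O_{ij},
\]
and by the very definition of $N_j := O(2L_j, q_j, Q)$ one has $Q \circ \Phi_j = x^{N_j} \widetilde Q_j(x,u)$ with $\widetilde Q_j(0,u)$ a nonzero polynomial in $u$.

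For necessity I would fix $j$, choose a compact interval $I \subset \R$ avoiding the finitely many zeros of $\widetilde Q_j(0,\cdot)$ and of each $\phi_{ij}(0,\cdot)$, and note that $\Phi_j$ is a diffeomorphism of $(0,\epsilon) \times I$ onto its image for small $\epsilon$. On this set the non-monomial factors are bounded above and below by positive constants, so the change-of-variables formula reduces the integral of $|Q/P|^\p$ over the image to
\[
\int_I \int_0^\epsilon x^{\p(N_j - S_j) + 2L_j}\, dx\, du,
\]
up to bounded multiplicative constants. Finiteness forces $\p(N_j - S_j) + 2L_j > -1$, equivalent (since $N_j, S_j \in \ints$) to $N_j \geq S_j - \lceil (2L_j + 1)/\p \rceil + 1$, which is the stated inequality for each $j$.

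For sufficiency, the same calculation produces an integrable pointwise bound for $|Q/P|^\p$ on each image $\Phi_j((0,\epsilon) \times I)$, so the remaining obstacle is to conclude integrability on a full neighborhood of $(0,0)$. The introduction explicitly disavows a geometric partitioning of the neighborhood into tubes around branches, so I expect the sufficiency proof to combine the $M$ pointwise bounds via the ideal-theoretic description in Theorem \ref{uberthm}: the inequalities stated here are shown to characterize $\mcI^\p_P$ exactly, and integrability is verified on a convenient set of generators of this ideal, from which the general case follows by linearity. The hardest step, I expect, is confirming that the stated inequalities actually cut out an ideal of $\C\{x,y\}$ and matching it with the ideal generated by ``model'' numerators whose integrability can be checked directly.
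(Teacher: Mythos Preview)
Your necessity argument is correct and is genuinely different from the paper's route. The paper does not use the change of variables $\Phi_j$; instead it first proves Theorem~\ref{lpchar} by applying a parametrized one-variable $L^{\p}$--$\ell^{\p}$ equivalence (Proposition~\ref{onevarlp}, with constants controlled in Lemma~\ref{Iorder}) to $y\mapsto Q(x,y)/P(x,y)$, obtaining the condition $\text{Ord}(Q(x,a_j(x;t)))\geq S_j-\lceil(2L_j+1)/\p\rceil+1$ along the analytic branches of $A+tB$. It then identifies this order of vanishing with $O(2L_j,q_j,Q)$ for proper $t$ via Lemma~\ref{lemorddatum} and Proposition~\ref{distval}. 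Your horn-region computation is more elementary for necessity and avoids the de~Branges machinery entirely; the paper's approach, on the other hand, yields necessity and sufficiency simultaneously from a two-sided norm equivalence.

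The sufficiency half of your proposal has a real gap, and your guess about how to close it is circular. Theorem~\ref{uberthm} is proved \emph{after} Theorem~\ref{finalint} in the logical order (it relies on Theorem~\ref{lpchar} through Proposition~\ref{ccond}), so you cannot invoke it here. The horns $\Phi_j((0,\epsilon)\times I)$ do not cover a neighborhood of the origin, and it is not clear how to promote integrability on them to integrability on a ball without exactly the kind of region decomposition you note the paper avoids. The missing idea is the interpolation/representation formula~\eqref{Qform}: for $\deg_y Q<M$ one writes
\[
Q(x,y)=\sum_{j=1}^{M}\frac{Q(x,a_j(x;t))}{(A_y+tB_y)(x,a_j(x;t))}\,\frac{A(x,y)+tB(x,y)}{y-a_j(x;t)},
\]
then applies the $L^{\p}$ triangle inequality in $y$ and the estimate $I_j(t,\p)\approx |x|^{-2L_j(1-1/\p)}$ to bound $\|Q(x,\cdot)/P(x,\cdot)\|_{L^{\p}(\R)}$ pointwise in $x$ by $\sum_j |Q(x,a_j(x;t))|\,|x|^{r_j}$. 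Integrating in $x$ gives sufficiency directly, with no partitioning needed. This is the step your proposal is missing.
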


The main point is that integrability of $Q/P$ can
be characterized by measuring the generic
order of vanishing of $Q$ along
certain types of curves $y + q(x)+ x^{2L} s = 0$  
where $s$ is a ``generic'' parameter.

\begin{example} \label{favex2}
Returning to Example \ref{favex}, which we
reduced to the case of $P = y+x + ix^2$,
we have $M=1$, $L_1=1$, $q_1(x) = x$.
We can check when $Q(x,y)/P(x,y) \in L^{\p}_{loc}$ 
by computing $O(2, x, Q(x,y))$.
This equals the power of $x$ dividing 
$Q(x, x^2y - x)$.
It also helps to compute 
that 
\[
\left\lceil \frac{3}{\p} \right\rceil  = \begin{cases} 3 & \text{ for } 1\leq \p < 3/2\\
2 & \text{ for } 3/2 \leq \p < 3 \\
1& \text{ for } \p \geq 3\end{cases}
\]
In this case, $\sum_{i,j=1}^{1} O_{ij} = 2L_1 = 2$ so $Q/P \in L^{\p}_{loc}$
if and only if
\[
O(2, x, Q) \geq 3 - \left\lceil \frac{3}{\p} \right\rceil  = \begin{cases} 0 & \text{ for } 1\leq \p < 3/2\\
1 & \text{ for } 3/2 \leq \p < 3 \\
2 & \text{ for } \p \geq 3\end{cases}.
\]
The case $O(2,-x,Q) \geq 0$ is vacuous.
The case $O(2,-x,Q) \geq 1$ simply means $Q(0,0)=0$.
The case $O(2,-x,Q) \geq 2$ means $x^2$ divides $Q(x,x^2y-x)$.

This means $Q(0,0)= 0$ and $x$ divides
\[
\frac{d}{dx} Q(x,x^2y-x) = Q_x(x,x^2y-x) + Q_y(x,x^2y-x)(2xy-1)
\]
or rather $Q_x(0,0) - Q_y(0,0) = 0$.  This matches what was said
in Example \ref{favex} $\diamond$
\end{example}

As an application of Theorem \ref{finalint} we can prove that 
derivatives of locally bounded rational functions belong to $L^1_{loc}$.  
(See Theorem \ref{Derthm} in Section \ref{Dersec}.)
Translating this back to the bidisk/two-torus setting we obtain the following.

\begin{theorem} \label{elloneder}
If $f(z,w)$ is rational as well as bounded and analytic on $\D^2$, then the partial derivatives
\[
\frac{\partial f}{\partial z}, \frac{\partial f}{\partial w}
\]
belong to $H^1(\D^2)$.
\end{theorem}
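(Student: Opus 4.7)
The plan is to reduce the global $H^1(\D^2)$ claim to the local $L^1_{loc}$ statement of Theorem \ref{Derthm} via conformal pullback. Write $f=Q/P$ in lowest terms, so $P$ has no zeros in $\D^2$. Since $|\partial f/\partial z|$ is plurisubharmonic, the radial $L^1$ means $r\mapsto\int_{\T^2}|(\partial f/\partial z)(rz,rw)|\,dm(z,w)$ are increasing in $r$, so $\partial f/\partial z\in H^1(\D^2)$ if and only if its boundary trace is in $L^1(\T^2)$. Because $Q,P$ are coprime, B\'ezout's theorem makes $V(P)\cap V(Q)\subset\C^2$ finite; in particular $P$ cannot vanish along any real curve on $\T^2$, since boundedness of $f$ would force $Q$ to vanish on that curve and therefore $V(P)\cap V(Q)$ to be infinite. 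Consequently $P$ has only finitely many boundary zeros $\zeta_1,\dots,\zeta_N\in\T^2$, and $\partial f/\partial z$ is smooth on $\T^2$ off this finite set, reducing the $L^1(\T^2)$ estimate to a finite union of local $L^1$ estimates near the $\zeta_k$.

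Next I would pull each local problem back to the upper half-plane. At $\zeta_k=(z_0,w_0)\in\T^2$, compose two Cayley transforms mapping a neighborhood of $(\D^2,\zeta_k)$ conformally onto a neighborhood of $(\uhp^2,(0,0))$, obtaining a rational $\tilde f=\tilde Q/\tilde P$ with $\tilde P$ nonvanishing on $\uhp^2$ and $\tilde P(0,0)=0$. Boundedness of $f$ on $\D^2$ transfers to local boundedness of $\tilde f$ near $(0,0)$ in $\R^2$. Since the surface measure on $\T^2$ and Lebesgue measure on $\R^2$ differ by a smooth, strictly positive density near $\zeta_k\mapsto(0,0)$, and since the chain rule expresses $\partial f/\partial z$ (respectively $\partial f/\partial w$) as $\partial\tilde f/\partial x$ (respectively $\partial\tilde f/\partial y$) multiplied by a smooth nonvanishing Cayley Jacobian factor, local $L^1$-integrability on $\T^2$ near $\zeta_k$ is equivalent to $L^1_{loc}$-integrability of $\partial\tilde f/\partial x$ and $\partial\tilde f/\partial y$ at the origin of $\R^2$. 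This is precisely what Theorem \ref{Derthm} provides. Summing over $k=1,\dots,N$ yields $\partial f/\partial z\in L^1(\T^2)$ and hence $\partial f/\partial z\in H^1(\D^2)$; the same argument handles $\partial f/\partial w$.

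I expect the main technical obstacle to lie in making sure the hypotheses supporting Theorem \ref{Derthm}---in particular the coprimality of $\tilde P$ with $\refl{\tilde P}$ needed to apply the bracket reduction Theorem \ref{brackthm}---remain valid after the Cayley pullback. The B\'ezout argument above yields isolated boundary zeros, but a shared real-stable factor of $\tilde P$ and $\refl{\tilde P}$ could a priori persist; such a factor has only isolated boundary zeros as well, so one expects to be able to extract it and handle it within the same local bracket framework. Carefully tracking this algebraic reduction, together with the chain-rule bookkeeping under the Cayley pullback and the $H^1$-vs-$L^1(\T^2)$ equivalence quoted above, constitutes the main bookkeeping in the proof; once these are in place, everything else reduces to the application of Theorem \ref{Derthm}.
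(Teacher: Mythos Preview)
Your proposal is correct and follows the paper's route exactly: the paper states Theorem \ref{elloneder} as the bidisk translation of the local half-plane result Theorem \ref{Derthm}, leaving to the reader precisely the Cayley bookkeeping you outline. Your one stated worry also dissolves: a common factor of $\tilde P$ and $\refl{\tilde P}$ would be (up to a scalar) a real-coefficient polynomial nonvanishing on $\uhp^2$, i.e.\ real stable, and for each real $x_0$ the slice $y\mapsto g(x_0,y)$ would then have only real roots, forcing a curve of zeros of $\tilde P$ in $\R^2$---contrary to the isolated-zero conclusion you already drew from boundedness and B\'ezout.
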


Such a result is trivial in one variable, but if 
we do not assume $f$ is rational then nothing like this is true even in
one variable.  The singular inner function $f(z) = \exp\left(-\frac{1+z}{1-z}\right)$
(which in particular is bounded)
has $f'(z) = \frac{-2}{(1-z)^2} f(z) \notin H^1(\D)$. 
The conclusion of Theorem \ref{elloneder} cannot
be caused by any sort of bounded embedding since monomials 
$z^n$ have sup norm $1$ but the $L^1$ norm of the derivative equal to $n$. 
The theorem should be of interest because
although $H^1(\D^2)$ is widely studied
in connections with Hankel operators and harmonic
analysis, it is non-trivial to construct
elements of $H^1(\D^2)$ in a natural manner---most constructions
are ad hoc (i.e. products of $H^2$ functions).
The result is related to work of Bickel-Pascoe-Sola \cites{BPS18, BPS20}.
In those papers it is proven 
that given a rational \emph{inner} function $f:\D^2 \to \D$,
then 
$\frac{\partial f}{\partial z} \in H^{\p}(\D^2)$ if and only 
if $\p < 1+1/K$ where $K$ is a geometric quantity associated to
the denominator of $f$ (called the contact order).  
We will have a similar result along these lines but it
will not be an if and only if because a general bounded
rational function could behave better than this (see Theorem \ref{Derthm}).
In any case, by these results 
$H^1$ in Theorem \ref{elloneder} cannot be improved
to any $H^{\p}$ for $\p>1$.

To discuss our next main result 
consider the ideal $([P],\refl{[P]})$ generated by $[P]$ and $[\refl{P}]$
in $\C\{x,y\}$.  Elements of this ideal evidently belong
to $\mathcal{I}^{\p}_{P} = \mathcal{I}^{\p}_{[P]}$ so it makes sense to study the quotient 
$\mathcal{I}^{\p}_{P}/([P],[\refl{P}])$.  
This is finite dimensional because the larger set
\begin{equation} \label{IntQuot}
\C\{x,y\}/([P],[\refl{P}])
\end{equation}
is finite dimensional.  
In fact, the dimension of \eqref{IntQuot} is equal to the \emph{finite} 
 intersection multiplicity of the common zero at $(0,0)$ of
 $[P]$ and $[\refl{P}]$; see Fulton \cite{Fulton} Chapter 3, especially Section 3.3.
 Referring to Definition \ref{Oij}, it turns out that
 \begin{equation} \label{dims}
 \dim \C\{x,y\}/([P],[\refl{P}]) = \dim \C\{x,y\}/(P,\refl{P}) = \sum_{i,j} O_{ij} = \sum_{j} 2L_j + 2 \sum_{i<j} O_{ij}.
 \end{equation}

One of our main results is the following.

\begin{theorem} \label{l2dim}
Assume $P \in \C[x,y]$ has no zeros in $\uhp^2$, $P(0,0)=0$, and $P, \refl{P}$ have
no common factors.  Then,
\[
\dim( \mathcal{I}^2_P/([P],[\refl{P}])) = \frac{1}{2} \dim(\C\{x,y\}/(P,\refl{P}))
\]
\end{theorem}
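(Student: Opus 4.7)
The plan is to exploit the Gorenstein structure of $R := \C\{x,y\}/([P], \refl{[P]})$ and show that the image of $\mathcal{I}^2_P$ in $R$ is a maximal isotropic (Lagrangian) subspace with respect to the Grothendieck residue pairing. Since $P$ and $\refl{P}$ share no common factor, the same is true of $[P]$ and $\refl{[P]}$ (they have the same zero set near the origin by Theorem \ref{brackthm}), so these two polynomials form a regular sequence in the regular local ring $\C\{x,y\}$. The quotient $R$ is therefore a complete intersection, hence Gorenstein Artinian local, of length $N := \sum_{i,j} O_{ij}$ by equation (1.4). The Grothendieck residue
\[
\phi(a) := \mathrm{Res}_{(0,0)}\!\left(\frac{a\,dx \wedge dy}{[P]\,\refl{[P]}}\right)
\]
defines a perfect bilinear pairing $(a,b) \mapsto \phi(ab)$ on $R$ satisfying $\mathrm{Ann}_R(I) = I^{\perp}$ and $\dim I + \dim \mathrm{Ann}_R(I) = N$ for every $\C$-subspace $I \subseteq R$.

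By Theorem \ref{finalint} at $\p = 2$, and using $\lceil(2L_j+1)/2\rceil = L_j+1$, membership $Q \in \mathcal{I}^2_P$ is equivalent to the bounds
\[
O(2L_j, q_j, Q) \geq c_j := \sum_{i=1}^{M} O_{ij} - L_j \qquad (j=1,\ldots,M).
\]
The map $Q \mapsto O(2L_j, q_j, Q)$ is additive under products, so any $Q, Q' \in \mathcal{I}^2_P$ satisfy $O(2L_j, q_j, QQ') \geq 2c_j$. Writing $1/[P] = \sum_j 1/(P_j D_j(x))$ by partial fractions---with $P_j(x,y) := y + q_j(x) + ix^{2L_j}$ and $D_j(x) := \prod_{i\neq j} P_i(x, -q_j(x) - ix^{2L_j})$---the residue splits as
\[
\phi(QQ') = \sum_{j=1}^M \mathrm{Res}_{x=0}\frac{QQ'(x, -q_j(x) - ix^{2L_j})}{D_j(x)\,\refl{[P]}(x, -q_j(x) - ix^{2L_j})}\,dx.
\]
A direct order count gives $\mathrm{ord}_x \refl{[P]}|_{\text{branch } j} = \sum_i O_{ij}$ and $\mathrm{ord}_x D_j \geq \sum_{i\neq j} O_{ij}$, so the denominator has $x$-order at least $2c_j$, matching the numerator's guaranteed order. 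When equality holds at every $j$ each integrand is individually holomorphic; in the degenerate cases where some $D_j$ picks up extra vanishing (e.g.\ when two branches agree beyond order $\min(2L_i,2L_j)$), the excess orders trace back via the identity $P_j - \refl{P_j} = 2ix^{2L_j}$ to shared algebraic structure between branches, producing the cancellation in the branch-sum needed for $\phi(QQ') = 0$. This gives the isotropic inclusion $I \subseteq \mathrm{Ann}_R(I)$, where $I := \mathcal{I}^2_P/([P],\refl{[P]})$.

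The main obstacle is the reverse containment $\mathrm{Ann}_R(I) \subseteq I$. I would approach it constructively: for each $Q \notin \mathcal{I}^2_P$, select an index $j$ at which $O(2L_j, q_j, Q) < c_j$ and build an explicit test function $Q' \in \mathcal{I}^2_P$ concentrated on the $j$-th branch---vanishing sufficiently along the other branches to stay in $\mathcal{I}^2_P$, but achieving exactly the complementary order along the $j$-th---so that the sum defining $\phi(QQ')$ reduces to a single nonvanishing contribution. Handling the degenerate cases above is the most delicate part, since one must simultaneously control $Q'$'s orders along all branches; the branch parametrizations coming from Theorem \ref{brackthm} furnish the coordinates needed to make such a construction concrete. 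With both inclusions established, the Gorenstein duality $\dim I + \dim \mathrm{Ann}_R(I) = N$ forces $\dim I = N/2 = \tfrac12 \dim \C\{x,y\}/(P,\refl{P})$, completing the proof.
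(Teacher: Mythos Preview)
Your approach via Gorenstein duality and the Grothendieck residue pairing is genuinely different from the paper's, and conceptually attractive: if $\mathcal{I}^2_P$ were Lagrangian for the residue pairing on $R$, the dimension formula would drop out immediately. The paper instead deduces Theorem~\ref{l2dim} as the $\p=2$ case of Theorem~\ref{lpdim}, which in turn is proven by constructing an explicit basis for $\C\{x,y\}/([P],\refl{[P]})$ out of the factors of $A+tB$ for \emph{generic} $t$ (Proposition~\ref{basis}, Theorem~\ref{uberthm}) and then counting how many basis coefficients survive the order-of-vanishing constraints from Theorem~\ref{lpchar}. So the paper never touches residues or duality; it is a direct constructive dimension count.

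However, your argument has a genuine gap beyond the hand-waving you flag. The partial fraction decomposition $1/[P]=\sum_j 1/(P_j D_j)$ requires the linear factors $P_j=y+q_j(x)+ix^{2L_j}$ to be pairwise distinct, but Theorem~\ref{brackthm} does \emph{not} guarantee this: two branches of $P$ can share the same datum $(2L_j,q_j)$, in which case $[P]$ has a repeated factor and $D_j\equiv 0$. A concrete instance is $P=(y+x+ix^2)(y+x+2ix^2)$, which has no zeros in $\uhp^2$, no common factor with $\refl{P}$, yet $[P]=(y+x+ix^2)^2$. Your residue computation collapses here, and this is not a boundary case one can perturb away---repeated data occur stably. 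You would need to redo the partial fractions with higher-order poles, and then the clean order-matching that drives your isotropy argument no longer applies term-by-term.

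This is precisely why the paper works with $A+tB$ rather than $[P]$ itself: Theorem~\ref{matchbranches} and Proposition~\ref{distval} show that for proper $t$ the branches of $A+tB$ are always simple and have exactly the predicted orders of contact $O_{ij}$, with no degenerate cases. That is the technical heart of the paper's method, and it is what lets the explicit basis and the order conditions mesh cleanly in Proposition~\ref{simplec}. Your coisotropy sketch would face the same obstruction: building a test function $Q'$ ``concentrated on the $j$-th branch'' is ill-posed when branches coincide. If you want to salvage the duality approach, you would likely need to import the $A+tB$ machinery anyway to separate the branches, at which point much of the paper's work is already done.
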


Necessarily, the intersection multiplicity is even. 
Thus, if we disregard combinations of $[P],[\refl{P}]$, then ``half'' of the remaining power series are
square integrable.  A non-local version of this result was obtained in \cite{gKintreg}.
Along the way we obtain an
explicit basis for $\mathcal{I}^2_P/([P],[\refl{P}])$.
Combined with our test for membership in $\mathcal{I}^2_P$, this
 gives us a nearly complete understanding of $\mathcal{I}^2_{P}$.

Computing the dimension of the ideals corresponding to $L^{\p}$ is more subtle.
Define for $\p \in (0,\infty]$
\[
\mathcal{I}^{\p}_{P} = \{Q \in \C\{x,y\}: Q/P \in L^{\p}_{loc}(0)\}.
\]
Here $L^{\p}_{loc}(0)$ is the $\p$ analogue of $L^2_{loc}(0)$.
Again, $([P],[\refl{P}]) \subset \mcI_{P}^{\p}$ 
so it makes sense to consider the quotient $\mcI_{P}^{\p}/([P],[\refl{P}])$ 
and compute its dimension.  

\begin{restatable}{theorem}{lpdim} 
\label{lpdim}
Assume $P \in \C[x,y]$ has no zeros in $\uhp^2$, $P(0,0)=0$, and $P, \refl{P}$ have
no common factors.  Let $[P](x,y)$ and  
$L_1,\dots, L_M \in \mathbb{N}$ be defined
as in Theorem \ref{brackthm} and $O_{ij}$ defined 
as in Definition \ref{Oij}.
For $\p \in [1,\infty)$
\[
\dim \left(\mcI_{P}^{\p}/([P],[\refl{P}]) \right)= 
\sum_{(j,k):j<k} O_{jk} 
+ \sum_{k=1}^{M} \left(\left\lceil \frac{2L_k + 1}{\p}\right\rceil -1\right).
\]
and
\[
\dim\left( \mcI_P^{\infty}/([P],[\refl{P}]) \right)
= \sum_{(j,k):j<k} O_{jk}.
\]
\end{restatable}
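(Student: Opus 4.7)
The plan is to reduce the theorem to a dimension count in the Artinian ring $R := \C\{x,y\}/([P],[\refl P])$ via Theorem~\ref{finalint}. Using $\dim R = \sum_j 2L_j + 2\sum_{j<k} O_{jk}$ from \eqref{dims}, the claim is equivalent to showing
\[
\dim R - \dim\bigl(\mcI_P^\p/([P],[\refl P])\bigr) = \sum_j 2L_j + \sum_{j<k} O_{jk} - \sum_j\left(\left\lceil\tfrac{2L_j+1}{\p}\right\rceil -1\right).
\]

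For each $j$ introduce the injective ring homomorphism $\phi_j: \C\{x,y\} \to \C\{x,y\}$ by $\phi_j(Q)(x,y) = Q(x, x^{2L_j}y - q_j(x))$, so Theorem~\ref{finalint} identifies $\mcI_P^\p$ as the intersection of the ideals $I_j := \phi_j^{-1}((x^{N_j}))$ with $N_j := \sum_i O_{ij} - \lceil(2L_j+1)/\p\rceil + 1$. A direct factor-by-factor calculation shows
\[
\phi_j(y + q_k + ix^{2L_k}) = x^{2L_j}y + (q_k - q_j)(x) + ix^{2L_k}
\]
is divisible by $x^{O_{jk}}$ (case analysis on which term realizes the minimum defining $O_{jk}$), and similarly for the factors of $[\refl P]$; hence $\phi_j([P])$ and $\phi_j([\refl P])$ are both divisible by $x^{\sum_k O_{jk}} \ge x^{N_j}$. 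This confirms the conditions descend to $R$, so that $W_j := I_j/([P],[\refl P])$ are subspaces of $R$ with $\mcI_P^\p/([P],[\refl P]) = \bigcap_j W_j$.

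The main step is computing the codimension of $\bigcap_j W_j$ in $R$, equivalently the dimension of $\sum_j W_j^\perp$ in $R^*$. The defining functionals of $W_j^\perp$ are $\ell_{j,a,b}(Q) := [x^a y^b]\phi_j(Q)$ for $0 \le a < N_j$ and $b \ge 0$, viewed as functionals on $R$ (well-defined by the divisibility above). My plan is to compute the span of $\{\ell_{j,a,b}\}$ by working in a basis of $R$ built from the pairwise-branch intersection data, with contributions $O_{jk}$ per pair assembled to match the dimension formula in \eqref{dims}. Careful tracking should then split the codimension into ``diagonal'' contributions $2L_j - (\lceil(2L_j+1)/\p\rceil -1)$ from each branch and ``off-diagonal'' contributions $\sum_{j<k} O_{jk}$ from branch pairs, yielding the claimed total codimension.

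The main obstacle is carrying out this basis construction and count rigorously. The substitutions $\phi_j$ mix diagonal and off-diagonal parts, and showing the functionals $\ell_{j,a,b}$ contribute independently in the expected pattern requires the Puiseux-series machinery and local face decomposition of Section~\ref{sec:locface}, together with the factor-by-factor estimates central to Theorem~\ref{finalint}. For $\p = \infty$, taking $\p \ge \max_j(2L_j+1)$ reduces the finite-$\p$ formula to $\sum_{j<k} O_{jk}$; one verifies $\mcI_P^\p = \mcI_P^\infty$ in this regime either directly from Theorem~\ref{finalint} or via Koll\'ar's characterization of locally bounded $Q/P$ in \cite{kollar}.
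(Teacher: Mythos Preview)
Your proposal sets up a plausible framework but stops short of the key computation. You correctly reduce to computing the codimension of $\bigcap_j W_j$ in $R$, and you correctly observe that this amounts to computing $\dim\bigl(\sum_j W_j^\perp\bigr)$ where $W_j^\perp$ is spanned by the coefficient-extraction functionals $\ell_{j,a,b}$. But you never compute this span; you only announce a plan and then concede that ``carrying out this basis construction and count rigorously'' is the main obstacle. This is precisely where the content lies. The difficulty is that for distinct $j,k$ with large $O_{jk}$, the substitutions $\phi_j$ and $\phi_k$ agree to high order, so the corresponding functionals overlap substantially. Showing that the overlaps contribute exactly $\sum_{j<k} O_{jk}$ to the codimension is not a bookkeeping matter; it is the heart of the theorem, and nothing in your outline explains how to do it.

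The paper takes a different route that avoids this functional linear-algebra problem entirely. It constructs an explicit basis of $R$ from the branches $y=a_k(x;t)$ of $A+tB$ for a generic (``proper'') $t$, namely the elements $x^iF_k(x,y)$ of Proposition~\ref{basis}. The de Branges representation formula (Theorem~\ref{parseval}) then expresses any $Q$ in terms of this basis and simultaneously gives $Q(x,a_j(x;t))$ as an explicit linear combination of the basis coefficients $c_k(x)$. The crucial step (Proposition~\ref{simplec}) is that after a suitable relabeling of the branches, the integrability conditions of Theorem~\ref{lpchar} become \emph{lower triangular} in the $c_k$: the condition at branch $j$ reduces to a vanishing-order condition on $c_j$ alone. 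Once the conditions are triangular, the dimension count is a one-line sum. Your approach, by contrast, works with the raw conditions from Theorem~\ref{finalint} and would have to disentangle their dependencies directly, which is exactly what the paper's basis and relabeling are designed to circumvent.

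For $\p=\infty$ your reduction is fine in spirit, but note that Theorem~\ref{finalint} is stated only for finite $\p$; the paper obtains the $\p=\infty$ case via Theorem~\ref{Kmax}, which shows $\mcI_P^{K+1}=\mcI_P^\infty$ for $K=\max_j 2L_j$.
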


Using \eqref{dims} it is possible
to express the above dimensions entirely
in terms of $\p$, $L_1,\dots, L_M$, and
$\dim \C\{x,y\}/(P,\refl{P})$; however,
the above expressions might be
the best way to see how the dimensions
change with respect to $\p$.
For examples, when $\p=1$ the dimension is 
\[
\sum_{(j,k):j<k} O_{jk}  + \sum_{k=1}^{M} 2L_k
\]
and for $\p=2$ it is
\[
\sum_{(j,k):j<k} O_{jk}  + \sum_{k=1}^{M} L_k
\]
which is $1/2$ of \eqref{dims}. 

\begin{example} 
Returning to Examples \ref{favex},\ref{favex2}, 
we have $[P] = y+ x + ix^2$, $[\bar{P}] = y + x - ix^2$
so that $([P],[\bar{P}]) = (y+x, x^2)$ after
manipulating combinations of $[P],[\bar{P}]$. 
Every $Q \in \C\{x,y\}$ can be reduced mod $([P],[\bar{P}])$
to a degree one polynomial $Q(x,y) = a+bx$.
According to Example \ref{favex2}, 
we therefore have $\mcI_P^{\p}/([P],[\refl{P}])$
is two dimensional for $1\leq \p < 3/2$,
one dimensional for $3/2\leq \p < 3$,
and zero dimensional for $\p \geq 3$.
This matches the computation
\[
\dim\left( \mcI_P^{\p}/([P],[\refl{P}]) \right)
= \left\lceil \frac{3}{\p} \right\rceil - 1. \quad \diamond
\]
\end{example}

Because dimensions are integers,
we always have the property that if $Q/P \in L^{\p}_{loc}$, then  $Q/P \in L^{\p+\epsilon}_{loc}$ for 
some $\epsilon>0$.
In particular, setting $K = \max\{2L_1,\dots, 2L_M\}$,
if $Q/P \in L^{1}_{loc}$ then 
$Q/P \in L^{\p}_{loc}$ for $\p < 1+1/K$.
On the other hand,
if $Q/P$ belongs to $L^{K+1}_{loc}$ then $Q/P$ belongs to $L^{\infty}_{loc}$.  
In the course of proving Theorem \ref{lpdim} 
we are able to give an explicit basis for $\mcI_P^{\p}/([P],[\refl{P}])$. 
We are also able to give a new
proof of the theorem of Koll\'ar \cite{kollar} which completed
a characterization of the ideal $\mathcal{I}_{P}^{\infty}$ conjectured
in \cite{BKPS}.  

\begin{restatable}{theorem}{Iinf} [See Theorem 1.2 of \cite{BKPS}, Theorem 3 of \cite{kollar}] \label{Iinf}
Assume the setup of Theorem \ref{lpdim}.
The ideal $\mathcal{I}^{\infty}_{P}$ is equal to the
product ideal
\[
\prod_{j=1}^{M} (y+q_j(x), x^{2L_j}).
\]
\end{restatable}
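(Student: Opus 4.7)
Write $J := \prod_{j=1}^{M}(y+q_j(x), x^{2L_j})$ for the candidate ideal. The plan is to check the inclusion $J \subseteq \mathcal{I}^{\infty}_P$ together with $([P],[\refl{P}]) \subseteq J$ by hand, and then match the dimensions of the two quotients $J/([P],[\refl{P}])$ and $\mathcal{I}^{\infty}_P/([P],[\refl{P}])$ using Theorem \ref{lpdim} with $\p=\infty$.

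For the containment $J \subseteq \mathcal{I}^{\infty}_P$, Theorem \ref{brackthm} lets me replace $P$ by $[P] = \prod_j(y+q_j(x)+ix^{2L_j})$. A typical generator of $J$ has the form $\prod_{j\in A}(y+q_j(x))\prod_{j\notin A} x^{2L_j}$ for some $A \subseteq \{1,\dots, M\}$. Dividing by $[P]$ factor by factor turns this into a product whose individual terms are either $(y+q_j(x))/(y+q_j(x)+ix^{2L_j})$ or $x^{2L_j}/(y+q_j(x)+ix^{2L_j})$; for real $(x,y)$ the denominator has real part $y+q_j(x)$ and imaginary part $x^{2L_j}$, so each factor has modulus at most one and the whole ratio is locally bounded. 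Expanding $[P] = \sum_{A} i^{|A^c|}\prod_{j\in A}(y+q_j(x))\prod_{j\notin A}x^{2L_j}$ (and analogously for $[\refl{P}]$) exhibits each as a sum of generators of $J$, so $([P],[\refl{P}]) \subseteq J$ and the natural map $J/([P],[\refl{P}]) \hookrightarrow \mathcal{I}^{\infty}_P/([P],[\refl{P}])$ is well defined.

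To finish I would promote this containment to an equality. By Theorem \ref{lpdim} at $\p=\infty$ the target has dimension $\sum_{j<k} O_{jk}$, which via \eqref{dims} is equivalent to $\dim_{\C}(\C\{x,y\}/J) = \sum_j 2L_j + \sum_{j<k}O_{jk}$. My primary route is to lean on the explicit basis of $\mathcal{I}^{\infty}_P/([P],[\refl{P}])$ produced in the course of proving Theorem \ref{lpdim} and check, by direct expansion, that each representative can be rewritten as a polynomial combination of the generators $y+q_j(x)$ and $x^{2L_j}$ and hence already sits in $J$. A backup is induction on $M$ via the short exact sequence
\[
0 \to J_{M-1}/J_M \to \C\{x,y\}/J_M \to \C\{x,y\}/J_{M-1} \to 0,
\]
where $J_k := \prod_{j=1}^{k}(y+q_j(x), x^{2L_j})$; the identification $J_{M-1}/J_M = J_{M-1}\otimes_{\C\{x,y\}} \C\{x,y\}/I_M$, together with the observation that each $(y+q_k(x), x^{2L_k})$ restricts along $y=-q_M(x)$ to $(x^{O_{kM}})$, should contribute exactly $2L_M + \sum_{k<M} O_{kM}$ at each step.

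The main obstacle is this last dimension calculation. The basis route requires that the basis of $\mathcal{I}^{\infty}_P/([P],[\refl{P}])$ constructed for Theorem \ref{lpdim} be explicit enough to recognize each representative as a polynomial combination of the $y+q_j(x)$'s and $x^{2L_j}$'s; the inductive route instead requires care about possibly coinciding $O_{kM}$'s and ruling out contributions from $\mathrm{Tor}_1(\C\{x,y\}/J_{M-1},\C\{x,y\}/I_M)$ in the restriction computation.
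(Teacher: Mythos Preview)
Your containments $J \subseteq \mcI^{\infty}_P$ and $([P],[\refl{P}]) \subseteq J$ are fine, but the appeal to Theorem~\ref{lpdim} at $\p=\infty$ is circular: in this paper the $\p=\infty$ dimension formula is \emph{deduced from} Theorem~\ref{Kmax}, which is exactly the hard inclusion of Theorem~\ref{Iinf}. Theorems~\ref{lpchar} and~\ref{uberthm} are proved only for finite $\p$, so no independent ``explicit basis of $\mcI^{\infty}_P/([P],[\refl{P}])$'' is available to you. The fix is to squeeze with a large finite exponent: with $K=\max_j 2L_j$ one has $\lceil (2L_k+1)/(K+1)\rceil =1$ for all $k$, so the finite-$\p$ case of Theorem~\ref{lpdim} already gives $\dim \mcI^{K+1}_P/([P],[\refl{P}]) = \sum_{j<k} O_{jk}$, and the chain $J \subseteq \mcI^{\infty}_P \subseteq \mcI^{K+1}_P$ collapses once you either match $\dim J/([P],[\refl{P}])$ (your backup colength computation) or show that the $\p=K+1$ basis from Theorem~\ref{uberthm} lies in $J$ (your primary route, redirected from $\mcI^{\infty}_P$ to $\mcI^{K+1}_P$).

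The paper (Theorem~\ref{Kmax}) takes this latter idea but with a decomposition that sidesteps the colength computation entirely. It shows $\mcI^{K+1}_P \subseteq J$ directly via the interpolation formula \eqref{Qform}: any $Q$ with $\deg_y Q<M$ and $Q/P\in L^{K+1}_{loc}$ is written as $\sum_j \gamma_j(x)\,\frac{A+tB}{y-a_j(x;t)}$, where $\text{Ord}(\gamma_j)\ge 2L_j$ by Theorem~\ref{lpchar} and Corollary~\ref{Bvanish}, and $\frac{A+tB}{y-a_j}=\prod_{k\ne j}(y+q_k+x^{2L_k}\psi_k)$ visibly lies in $\prod_{k\ne j}(y+q_k,\,x^{2L_k})$; each summand is then manifestly in $J$. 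The advantage over the triangular $F_k$ basis is that the full product over $k\ne j$ leaves only the single missing factor $(y+q_j,\,x^{2L_j})$, which the coefficient supplies via $x^{2L_j}$. With the $F_k$ basis you would instead need $x^i\in\prod_{j\le k}(y+q_j,\,x^{2L_j})$ for $i\ge 2L_k+\sum_{j<k}O_{jk}$, and verifying that is essentially the same colength/Tor problem you already flagged as the main obstacle.
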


Koll\'ar's proof used a theorem of Zariski about
integrally closed ideals in 2-dimensional regular rings, while
the proof given here uses relatively soft analysis and elementary 
theory of algebraic curves (Puiseux series and intersection multiplicity 
 at the level of Fulton \cite{Fulton}). 

\begin{remark} \label{rem:curve}
We make a final quick comment about the situation where
$P$ and $\bar{P}$ may have a common factor---again
this is the case where $P$ will have a curve of zeros in $\R^2$.
In this case, if $Q(x,y)$ is analytic and has no
factors (locally) in common with $P$, then $Q/P$ is not locally
integrable.  
Here is a rough argument.  
We can move to a smooth point of the zero set of $P$ on $\R^2$, 
where $P(x,y)$ would look locally like $y+\phi(x)$
with $\phi(x)$ analytic with real coefficients.
When studying the integral $\int_{(-\epsilon,\epsilon)^2} |Q/P|$
we can apply a shear transformation 
and replace $y+\phi(x)$ (and hence $P$) with $y$.  
Any terms involving $y$ in $Q(x,y)$ can be disregarded since
they will be integrable, and we are left with analyzing
\[
\int_{(-\epsilon,\epsilon)^2} \left| \frac{Q(x,0)}{y}\right| dxdy
\]
which is infinite unless $Q(x,0) \equiv 0$.  
This would imply $y$ divides $Q$.  

Obtaining a full understanding of integrability in this situation
would force us to use $L^{\p}$ spaces for $\p<1$
and there would be some difficulties associated
with what happens when there are multiple factors
with curve and isolated zeros through $(0,0)$ in $\R^2$.  
$\diamond$
\end{remark}

\begin{remark}
Since this article first appeared several papers appeared
with connections to the work here.
The papers Bickel-Hong \cite{BH}, Tully-Doyle et al \cite{Tully} explore
the use of adjacency matrices for graphs
as a way to construct stable polynomials
with certain boundary behavior.  
The paper \cite{BKPS2} studied bounded rational functions
on the polydisk in more than 2 variables with smoothness
assumptions on the denominator stable polynomial.
\end{remark}

\section*{Acknowledgments}
A big thanks goes to my collaborators Kelly Bickel, James Pascoe,
and Alan Sola for listening to early versions of some of these
arguments.  A bonus thanks goes to Alan Sola for commenting 
on a draft of the paper.  Also, I offer my sincere gratitude
to the anonymous referee for a thoughtful and detailed report
that improved the paper in numerous ways.

 \section{Outline of our approach}
Section \ref{secfreq} contains restatements of the major 
theorems and definitions of the paper as well as some 
frequently used intermediate results so that readers need not 
always track them down in the body of the paper.  One could attempt to
simply read Section \ref{secfreq} to get a rapid summary of the technical
content of the paper.  The current section is a more
leisurely outline of the paper.

As we did above, the first step is to replace $P$ with the simpler $[P]$ as in \eqref{pbrack}.
Since $P/[P]$ is bounded above and below near $(0,0)$ in $\uhp^2$
(and also in $\R^2$), this replacement does not change the set of $Q(x,y)$ 
such that $Q/P$ belongs to $L^{\p}_{loc}$.  

The next step is to tackle the $L^2_{loc}$ case via a one variable ``quadrature'' or interpolation formula.
A simple version of the formula we refer to is the following; 
we need a more general version later.

\begin{theorem} \label{simplePW}
Given monic $p(y) \in \C[y]$ with no zeros in
$\overline{\uhp}$, let $A = \frac{1}{2}(p+\refl{p})$, $B = \frac{1}{2i}(p - \refl{p})$.
Then, $A$ has $M=\deg p$ distinct real zeros $a_1,\dots, a_M$
and for any $Q \in \C[y]$ with $\deg Q < M$ we have
\[
\int_{\R} \left|\frac{Q}{p}\right|^2 \frac{dy}{\pi} = \sum_{j=1}^{M} \frac{|Q(a_j)|^2}{B(a_j)A'(a_j)}
\]  
\end{theorem}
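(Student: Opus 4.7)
The plan is to reduce the identity to an orthogonality statement for the Lagrange interpolation basis at the zeros of $A$ in the finite-dimensional Hilbert space of polynomials of degree $<M$ with inner product $\langle Q_1,Q_2\rangle = \int_{\R} Q_1\overline{Q_2}/|p|^2\,dy/\pi$, and then to evaluate the resulting Gram matrix by residue calculus. Since $\deg Q < \deg p = M$, the integrand is $O(1/y^2)$ at infinity, so convergence of the integral is not an issue.

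First I would verify that $A$ has $M$ distinct real zeros. Writing $p(y)=\prod_{k=1}^{M}(y-\zeta_k)$ with $\Im\zeta_k<0$, the logarithmic derivative $p'/p=\sum_k 1/(y-\zeta_k)$ has strictly negative imaginary part on $\R$, so $\arg p(y)$ is strictly decreasing and decreases by $M\pi$ as $y$ ranges from $-\infty$ to $+\infty$. The equation $A(y)=0$ is equivalent to $\arg p(y)\equiv \pi/2\pmod \pi$, which therefore has exactly $M$ distinct real solutions $a_1<\cdots<a_M$. At each, $p(a_j)=iB(a_j)\neq 0$ forces $B(a_j)\neq 0$, and the identity $\Im(p'/p)=(AB'-A'B)/|p|^2$ evaluated at $a_j$ yields $A'(a_j)B(a_j)>0$, so the denominators in the formula are genuinely positive.

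Next, Lagrange interpolation gives $Q(y)=\sum_{j=1}^{M} Q(a_j) L_j(y)$ with $L_j(y)=A(y)/((y-a_j)A'(a_j))$. Since each $L_j$ has real coefficients, $|Q(y)|^2 = \sum_{j,k} Q(a_j)\overline{Q(a_k)}\,L_j(y)L_k(y)$ on $\R$, so the theorem reduces to the orthogonality relations $\langle L_j, L_k\rangle = \delta_{jk}/(A'(a_j)B(a_j))$. The workhorse identity is
\[
\frac{A^2}{|p|^2} = \tfrac{1}{2}\!\left(\frac{A}{p} + \frac{A}{\refl{p}}\right) = \mathrm{Re}\!\left(\frac{A}{p}\right) \quad \text{on } \R,
\]
using $\refl{p}(y)=\overline{p(y)}$ for real $y$. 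Together with $L_j L_k = A^2/((y-a_j)(y-a_k)A'(a_j)A'(a_k))$, this recasts the Gram entry as $\frac{1}{\pi A'(a_j)A'(a_k)}\mathrm{Re}\int_{\R} (A/p)/((y-a_j)(y-a_k))\,dy$. The rational function $A/p$ is analytic and bounded in $\overline{\uhp}$ (all zeros of $p$ being in the open lower half plane) and tends to $1$ at infinity.

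For $j\neq k$, the zeros of $A$ at $a_j$ and $a_k$ make $(A/p)/((y-a_j)(y-a_k))$ analytic across the real axis and $O(|y|^{-2})$ at infinity, so closing the contour in $\uhp$ gives $0$. For $j=k$, the Taylor expansion $(A/p)(y) = (A'(a_j)/(iB(a_j)))(y-a_j) + O((y-a_j)^2)$ shows that $(A/p)/(y-a_j)^2$ has a simple pole at $a_j$ with purely imaginary residue $-iA'(a_j)/B(a_j)$; an indented contour in $\uhp$ gives the Cauchy principal value $i\pi\cdot(-iA'(a_j)/B(a_j)) = \pi A'(a_j)/B(a_j)$. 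The main obstacle is this diagonal case: one has to check that the genuine, convergent integral of $\mathrm{Re}((A/p)/(y-a_j)^2)$ (the original integrand $A^2/((y-a_j)^2|p|^2)$ has a removable singularity at $a_j$) coincides with the real part of the principal value. This works precisely because the residue is purely imaginary, i.e.\ the singularity at $a_j$ lives entirely in the imaginary part — a direct consequence of $A'(a_j),B(a_j)\in\R$. Dividing by $\pi A'(a_j)^2$ yields the normalization $1/(A'(a_j)B(a_j))$ and completes the proof; no hypothesis on the multiplicities of the zeros of $p$ is used anywhere.
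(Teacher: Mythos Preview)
Your proof is correct. Both you and the paper expand $Q$ in the Lagrange basis $A(y)/(y-a_j)$ at the zeros of $A$ and establish orthogonality, but the mechanisms differ. The paper first derives the closed-form reproducing kernel
\[
K(y,\eta)=\frac{p(y)\overline{p(\eta)}-\refl{p}(y)\overline{\refl{p}(\eta)}}{-2i(y-\bar\eta)}
\]
by a residue argument, then reads off $K(a_j,a_k)=0$ and $K(a_j,a_j)=B(a_j)A'(a_j)$ directly; positivity of $K(y,y)$ also gives the simplicity and reality of the zeros of $A$. You instead prove the zero structure by tracking the monotone phase of $p$ along $\R$, and you bypass the kernel formula entirely with the identity $A^2/|p|^2=\mathrm{Re}(A/p)$, reducing each Gram entry to a single contour integral in $\uhp$; the diagonal case is handled by noting the residue at $a_j$ is purely imaginary, so the principal value agrees with the convergent real-part integral. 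Your route is a bit more self-contained for this particular identity, while the paper's kernel formula is a reusable tool: it immediately yields the $A+tB$ version (Theorem~\ref{parseval}) needed later, whereas your identity $A^2/|p|^2=\mathrm{Re}(A/p)$ is special to $t=0$ and would need modification to reach the parametrized statement.
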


See Section \ref{onevarsec}.  
This is really a sampling/interpolation
 formula for the simplest type of 
de Branges space of entire functions, but the formula can
be deduced from scratch for readers unfamiliar with this background.

A parametrized version of this formula allows us to
express integrability in two variables in terms
of integrability of several one variable terms.
The trade-off is that we obtain useful duality techniques and algebraic
formulas in exchange for having to do a ``global'' integral with 
respect to $y$.  This is only a real problem for studying $L^1$
but it is surmountable.

Starting with $Q(x,y) \in \C\{x,y\}$ we can apply the Weierstrass
division theorem to write $Q(x,y) = Q_0(x,y) + Q_1(x,y) P(x,y)$
where 
\[
\deg_y Q_0(x,y) < \deg_y P(x,y).
\] 
We can then simply replace $Q$ with $Q_0$ and assume
$Q$ has this degree constraint.
We then employ a parametrized version of Theorem \ref{simplePW} to our 
two variable polynomial $P(x,y)$ to express integrals
\[
\int_{\R} \left|\frac{Q(x,y)}{P(x,y)}\right|^2 \frac{dy}{\pi}
= \sum_{j=1}^{M} \frac{|Q(x,a_j(x))|^2}{B(x,a_j(x))A_y(x,a_j(x))}
\]
in terms of evaluation at the roots $a_1(x),\dots, a_M(x)$ of $y\mapsto A(x,y)$.
Here $A_y = \frac{\partial A}{\partial y}$.
Then,
\[
\int_{(-\epsilon,\epsilon)\times \R} |Q/P|^2 \frac{dx dy}{\pi} 
= \sum_{j=1}^{M} \int_{(-\epsilon,\epsilon)} \frac{|Q(x,a_j(x))|^2}{B(x,a_j(x))A_y(x,a_j(x))} dx
\]
The roots $a_j(x)$ turn out to be analytic as a function of $x$ and integrability of
the terms on the right then only depends on the order of vanishing of 
\[
\frac{|Q(x,a_j(x))|^2}{B(x,a_j(x))A_y(x,a_j(x))}.
\]
This allows us to understand integrability on $(-\epsilon,\epsilon)\times \R$.
The contribution
from $\{(x,y): |x|<\epsilon, |y|\geq \epsilon\}$ is unimportant because for small
$x$ and large $y$, $Q/P$ behaves at worst like $1/y$ which is square integrable away from
$0$.
Putting this together we get that $Q/P$ is in $L^2_{loc}$ if and only if for every $j$
\begin{equation} \label{oov}
2\text{Ord}(Q(x,a_j(x))) \geq \text{Ord}(B(x,a_j(x))) + \text{Ord}(A_y(x,a_j(x))).
\end{equation}
where $\text{Ord}$ refers to the order of vanishing at zero of a one variable analytic function.
This gives a concrete condition for checking membership
in $L^2_{loc}$ in specific examples,
and with some modifications it can be used as a part of
our general approach to describing 
$\mcI^{2}_{P}$. 

Our next goal is to describe a basis
for $\C\{x,y\}/(P,\refl{P})$.  
By digging into the explanation for a formula
for the intersection multiplicity of two curves from
Fulton \cite{Fulton} (section 3.3), 
we can give an explicit basis. 
For $1\leq k \leq M$, let 
\[
F_k(x,y) = \frac{A(x,y)}{\prod_{j=1}^{k} (y-a_j(x))} = \prod_{j=k+1}^{M}(y-a_j(x))
\]
and $m_k = \text{Ord}B(x,a_k(x))$.
Then,
\begin{equation} \label{simplebasis}
\{ x^j F_k(x,y): k=1,\dots, M,\ 0\leq j <m_k\}
\end{equation}
is a basis for $\C\{x,y\}/(P,\refl{P})$.  More precisely, the
representatives of these power series in the quotient form a basis
for the quotient.
In particular, 
$\dim \C\{x,y\}/(P,\refl{P}) 
 = \sum_{k=1}^{M} m_k$.
Thus, each element $Q \in \C\{x,y\}/(P,\refl{P})$ can be
represented as
\[
Q(x,y) = \sum_{k=1}^{M} c_k(x) F_k(x,y) 
\]
where $c_k(x) \in \C[x]$, $\deg c_k < m_k$.  A problem
now is to see exactly what \eqref{oov} 
says about the order of vanishing of  the polynomials $c_k(x)$.

To analyze this we need a
companion to Theorem \ref{simplePW}, namely the representation formula
\[
Q(y) = \sum_{j=1}^{M} \frac{Q(a_j)}{A'(a_j)} \frac{A(y)}{y-a_j}
\]
which in parametrized form becomes
\begin{equation} \label{paramform}
Q(x,y) = \sum_{j=1}^{M} \frac{Q(x,a_j(x))}{A_y(x,a_j(x))} \frac{A(x,y)}{y-a_j(x)}.
\end{equation}
If we represent the basis elements $F_k(x,y)$ via this formula
then we can express the coefficients $Q(x,a_j(x))/A_y(x,a_j(x))$
in terms of the basis coefficients $c_k(x)$.  
At this point we use \eqref{oov} to exactly determine how integrability
affects the coefficients $c_k$. This makes it possible to determine
the dimension of $\mathcal{I}^2_{P}/(P,\refl{P})$.

\begin{example} \label{easy2branch}
Let us consider the example $P(x,y) = (x+y - ixy)(2x+y - ixy)$.  
It locally factors into
\[
P = (1-ix)^2(y+ x + i x^2 + \cdots)(  y + 2x + i2x^2+\cdots)
\]
and therefore $[P]$ is $(y+x+ix^2)(y+2x+ix^2)$.
From here we replace $P$ with $[P]$ and proceed to compute
\[
A = y^2 + 3xy +2x^2 - x^4,\ B = x^2(2y+3x).
\]
Locally
\[
A = (y-a_1(x))(y-a_2(x)) = (y + x - x^2 + \cdots)(y+2x + x^2 + \cdots).
\]
Note that $\text{Ord}(B(x,a_j(x))) = 3, \text{Ord}(A_y(x,a_j(x))) = 1$ for $j=1,2$.
Thus, $Q/P \in L^2_{loc}$ if and only if
\[
\text{Ord}(Q(x,a_j(x))) \geq 2
\]
for $j=1,2$. 

Next, we construct a basis for $\C\{x,y\}/(P,\refl{P})$ and
see what these constraints impose.
In the construction above, $F_1(x,y) = y-a_2(x) = y + 2x+x^2+\cdots$,
$F_2(x,y)=1$, $m_1 = \text{Ord}(B(x,a_1(x))) = 3$, $m_2 = 3$.
Therefore, our basis for $\C\{x,y\}/(P,\refl{P})$ is
\[
x^j (y-a_2(x)) \text{ for } 0\leq j < 3, \text{ and } x^j \text{ for } 0\leq j < 3.
\]
Thus, the dimension of $\C\{x,y\}/(P,\refl{P})$ is $6$ and 
we can represent an element as
\[
Q(x,y) = c_1(x)(y-a_2(x)) + c_2(x) 
\]
where $c_1(x),c_2(x)$ are polynomials with degree less than $3$.
In this simple example, we can directly check our
condition for $Q/P\in L^2_{loc}$, namely
that $Q(x,a_2(x)) = c_2(x)$ vanishes to order at least $2$
and $Q(x,a_1(x)) = c_1(x)(a_1(x)-a_2(x)) + c_2(x)$
also vanishes to order at least $2$.
Since $a_1(x)-a_2(x)$ vanishes to order $1$ we must
have $c_1(x)$ vanishes to order at least $1$.
Thus we have whittled the dimension from $6$ down to $3$,
the dimension of $\mathcal{I}^2_{P}/(P,\refl{P})$. $\diamond$
\end{example}

This outline works well for some 
examples; however for a general argument
 there is a subtle problem.
In order to obtain a precise dimension count easily expressible in
terms of the data contained in $[P]$, we need to be able to
compute the $m_j = \text{Ord}(B(x,a_j(x)))$ 
as well as $\text{Ord}(a_j(x)-a_k(x))$ for
$j\ne k$ and relate these to $[P]$.  
Unfortunately, we cannot \emph{always} do this.
But,
if we replace $A$ with $A+tB$ for a generic $t$, we can.
There is a way to generically match up the
branches of $A+tB$ with those of $P$.  
For the polynomial in Example \ref{easy2branch},
we do not even need to specify ``generically'' because
locally
\[
A+tB = (y + x + O(x^2))(y+ 2x + O(x^2))
\]
for every $t\in \R$.  However, in general
there can be exceptional values of $t$
as the following example shows.

\begin{example} \label{exex}
Consider $P = (y+x+ix^2)(y+x+ix^4)$.  Then,
\[
A+tB = (y+x)^2 + t x^2(1+x^2)(y+x) -x^{6}
\]
The value $t=0$ is exceptional because
\[
A = (y+x+x^3)(y+x-x^3)
\]
and there is no correspondence between the branches of $P$ and $A$;
both branches of $A$ fit the pattern of $y+x + O(x^2)$ but neither fit the pattern $y+ x+ O(x^4)$.
For $t\ne 0$ we can solve for the branches of $A+tB$
\[
A+tB = (y +x+  t x^2 + (t+t^{-1})x^4 + O(x^5))(y+x - (1/t)x^4 + O(x^5))
\]
and here we \emph{do} have a clear correspondence to the branches of $P$.  $\diamond$
\end{example}

Something like this generic correspondence 
was already established 
in \cite{BKPS} for polynomials with no zeros
in $\uhp^2$ (see Theorem 2.20 of \cite{BKPS}).  However, since we have replaced
$P$ with $[P]$ we cannot directly apply 
theorems from \cite{BKPS}.  Fortunately,
$[P]$ satisfies the weaker but still global
property of having no zeros in $\R\times \uhp$.

The formal part of the paper now begins with a local
theory of polynomials with no zeros on $\R\times \uhp$.
We then discuss a more general version of Theorem \ref{simplePW}
involving $A+tB$ and its associated two variable version.
Next, we construct a basis for $\C\{x,y\}/(P,\refl{P})$ built out of $A+tB$.
Then, we use the representation formula
associated to Theorem \ref{simplePW} to determine the
dimension of $\mathcal{I}^2_{P}/(P,\refl{P})$.
Along the way we take a few detours to determine
$L^{\p}$ conditions on the coefficients in \eqref{paramform}
and its $A+tB$ analogue.

\section{Local theory for polynomials with no zeros on $\R\times \uhp$} \label{sec:locface}

Proceeding as in our outline, our goal is to 
understand $P$ with no zeros in $\uhp^2$, however
our local model $[P]$ from Theorem \ref{brackthm}
merely has no zeros in $\R\times \uhp$.
Therefore, we study polynomials or even analytic functions
with this weaker property. 
There is some good secondary motivation for studying
this weaker property.  As shown in the papers Geronimo-Iliev-Knese \cite{GIK}, Knese \cite{semi},
non-vanishing on $\R\times \uhp$ is related to an associated
homogeneous polynomial being \emph{hyperbolic} with respect to a single
direction.  Hyperbolic polynomials are a generalization of stable polynomials; see 
for example \cite{hyperbolic} for general
information.  

Our goal in this section is to prove the following.

\begin{restatable}{theorem}{facebranches} \label{facebranches}
    Suppose $P\in \C\{x,y\}$ has no zeros in $\R\times \uhp$ and no factors in common
    with $\refl{P}$.  
Let $M = \text{Ord}(P(0,y))$. Then, $P$ satisfies
    \begin{equation} \label{Pfactored}
    P(x,y) = 
    u(x,y) \prod_{j=1}^{M} (y + q_j(x) + x^{2L_j} \psi_{j}(x^{1/k}))
    \end{equation}

    where $u \in \C\{x,y\}$ is a unit, $k\geq 1$ and 
    for $j=1,\dots M$
    \begin{itemize}
        \item $L_j \in \mathbb{N}$ 
        \item $q_j \in \R[x], q_j(0)=0$, $\deg q_j < 2L_j$,
        \item $\psi_j(x) \in \C\{x\}$, $\Im \psi_j(0) >0$. 
        \end{itemize}
 \end{restatable}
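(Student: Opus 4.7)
The plan is to combine Weierstrass preparation and Puiseux's theorem with a geometric analysis of each individual branch. Apply Weierstrass preparation to write $P(x,y) = u(x,y) W(x,y)$ with $u$ a unit in $\C\{x,y\}$ and $W \in \C\{x\}[y]$ a Weierstrass polynomial of $y$-degree $M$. By Puiseux's theorem, choose $k \geq 1$ so that $W$ factors over $\C\{x^{1/k}\}[y]$ as $W(x,y) = \prod_{j=1}^{M}(y - \phi_j(x^{1/k}))$ with $\phi_j(t) \in \C\{t\}$, $\phi_j(0)=0$. The hypothesis that $P$ has no zeros in $\R \times \uhp$ translates to the statement that for each $j$, $\Im \phi_j(t) \leq 0$ whenever $t^k$ is real and $|t|$ is small, that is, on each of the $2k$ rays $\arg t = \pi l/k$, $l = 0, 1, \ldots, 2k-1$. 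The coprimality of $P$ and $\refl{P}$ rules out any $\phi_j(x^{1/k})$ being a real power series in $x$, since otherwise $y - \phi_j(x^{1/k})$ would be a common factor in $\C\{x,y\}$.

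\textbf{Branch analysis.} Fix $j$, write $\phi_j(t) = \sum_{n \geq 1} a_n t^n$, and call an index $n$ \emph{anomalous} if $a_n \neq 0$ and either $k \nmid n$ or $\Im a_n \neq 0$. The coprimality remark guarantees the existence of a smallest anomalous $n_0$; for $n < n_0$ with $a_n \neq 0$ we must have $k \mid n$ and $a_n \in \R$, and one checks directly that such a term $a_n t^n$ is real on every ray $\arg t = \pi l/k$. Consequently, evaluating on $t = s\,e^{i\pi l/k}$ with $s>0$ small gives
\[
\Im \phi_j(t) = s^{n_0}\, \Im\bigl(a_{n_0}\,e^{i\pi n_0 l/k}\bigr) + O(s^{n_0+1}),
\]
so the non-vanishing hypothesis forces $\Im(a_{n_0}\,e^{i\pi n_0 l/k}) \leq 0$ for all $l = 0, 1, \ldots, 2k-1$.

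\textbf{Geometric step.} Let $d = \gcd(n_0, 2k)$. The set $\{e^{i\pi n_0 l/k} : l = 0, \ldots, 2k-1\}$ consists of $2k/d$ equally spaced points on the unit circle, and for $a_{n_0}$ times this set to lie in the closed lower half-plane, the set itself must fit into some closed half-plane. Since three or more equally spaced points on a circle cannot be contained in any closed half-plane, we must have $2k/d \leq 2$. The case $d = k$ (points $\pm 1$) forces $\Im a_{n_0} = 0$, hence $a_{n_0} \in \R$ with $k \mid n_0$, contradicting the anomalous assumption. The surviving case is $d = 2k$, which gives $n_0 = 2kL_j$ for some $L_j \in \mathbb{N}$ and reduces the constraint to $\Im a_{n_0} \leq 0$; equality would again make $a_{n_0}$ real with $k \mid n_0$, violating anomalous, so $\Im a_{n_0} < 0$.

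\textbf{Assembly and main obstacle.} Set $q_j(x) := -\sum_{n < n_0,\,k \mid n} a_n\,x^{n/k} \in \R[x]$, which satisfies $q_j(0)=0$ and $\deg q_j \leq 2L_j - 1$, and $\psi_j(t) := -t^{-n_0}\bigl(\phi_j(t) + q_j(t^k)\bigr) \in \C\{t\}$, for which $\psi_j(0) = -a_{n_0}$, hence $\Im\psi_j(0) > 0$. By construction $\phi_j(x^{1/k}) = -q_j(x) - x^{2L_j}\psi_j(x^{1/k})$, and substituting back into the Puiseux factorization yields
\[
P(x,y) = u(x,y)\prod_{j=1}^{M}\bigl(y + q_j(x) + x^{2L_j}\,\psi_j(x^{1/k})\bigr)
\]
as required. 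The principal obstacle is the geometric step: one must exploit non-vanishing on \emph{all} $2k$ rays (not merely on the two halves of the real axis) to rule out both fractional-power leading contributions and leading imaginary contributions at odd integer powers of $x$. It is precisely the inability of three or more equally spaced points on a circle to lie in any closed half-plane that pins down $2L_j$ as an even positive integer and simultaneously forces strictly negative imaginary part of the leading coefficient.
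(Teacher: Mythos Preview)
Your proof is correct and follows essentially the same route as the paper: Weierstrass preparation, Puiseux factorization, and then a per-branch analysis showing that the leading ``non-real'' term must occur at an even integer power of $x$ with strictly nonreal coefficient. The only cosmetic difference is in how the arithmetic constraint is extracted---the paper phrases it as an arithmetic-progression lemma (an integer progression avoiding every other unit interval forces the step to be even), whereas you phrase it geometrically (three or more equally spaced points on the unit circle cannot lie in a closed half-plane through the origin); these are equivalent reformulations of the same elementary fact.
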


This is basically Theorem 2.16 of \cite{BKPS} 
for polynomials with no zeros in $\uhp^2$ with
the hypotheses weakened to
``no zeros in $\R\times \uhp$'' and the only weakening of the conclusion
 is that we cannot say $q_j'(0)>0$, which holds in the case of $P$ having
no zeros in $\uhp^2$.  
Note that if $P$ has no
zeros in $\uhp^2$, then any zeros in $\R\times \uhp$
must occur as part of ``vertical'' lines $\{(x_0,y):y\in \C\}$ for $x_0 \in \R$. 
(This follows from Hurwitz's theorem---for $t>0$, $x_0\in\R$, 
the polynomial $y\mapsto P(x_0+it,y)$
has no zeros in $\uhp$ and we can send $t\searrow 0$ to see
either $P(x_0,y) \equiv 0$ or $y\mapsto P(x_0,y)$ has no zeros in $\uhp$.) 
So, if $P$ 
and $\refl{P}$ have no common factors, then $P$
has no zeros on $\R\times \uhp$.
On the other hand, if $P$ and $\refl{P}$ do have
common factors then we cannot rule out
factors of the form $(x-x_0)$.
Since we are only interested in local properties
we need only consider the possibility of
a factor of the form $x^N$.  In any case,
the hypothesis of no zeros in $\R\times \uhp$ that
we now are using conveniently avoids these possibilities.

We can also establish
\begin{theorem}\label{facerealbranches}
If $P(x,y) \in \C\{x,y\}$ 
has no zeros in the region $\R \times (\C\setminus \R)$
then the zero set of $P$ is locally a union of smooth curves parametrized by 
 analytic functions with real coefficients.
\end{theorem}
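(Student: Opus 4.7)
The plan is to reduce to Weierstrass preparation, factor into irreducible Puiseux branches, and then exploit the hypothesis to force each branch to be unramified with a real-coefficient parametrization. First I would observe that $P(0,y) \not\equiv 0$: otherwise the whole vertical line $\{0\}\times \C$ lies in the zero set, so every point $(0,y)$ with $y \in \C\setminus\R$ would violate the hypothesis. Thus the Weierstrass preparation theorem lets us write $P(x,y) = u(x,y) W(x,y)$ with $u \in \C\{x,y\}$ a unit and $W \in \C\{x\}[y]$ a Weierstrass polynomial in $y$. Factor $W = W_1 \cdots W_r$ into irreducibles in $\C\{x\}[y]$. For each irreducible factor $W_i$ of degree $k_i$, Puiseux's theorem supplies a primitive parametrization $t \mapsto (t^{k_i}, \phi_i(t))$ with $\phi_i \in \C\{t\}$, $\phi_i(0) = 0$, and $\gcd(k_i, \{n : c_n \neq 0\}) = 1$, where $\phi_i(t) = \sum_n c_n t^n$.

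The key step is to show $k_i = 1$ for every $i$. Fix $i$ and write $k = k_i$. The hypothesis demands $\phi_i(t) \in \R$ whenever $t^k \in \R$. The set $\{t : t^k \in \R\}$ is the union of the $2k$ rays $t = r\,e^{im\pi/k}$ for $m = 0, 1, \ldots, 2k-1$ and $r > 0$. Plugging in, $\phi_i(r e^{im\pi/k}) = \sum_n c_n r^n e^{inm\pi/k}$ must be real for all sufficiently small $r > 0$, so $\Im(c_n e^{inm\pi/k}) = 0$ for every $n$ and every $m$. The case $m = 0$ gives $c_n \in \R$; the case $m = 1$ combined with $c_n \in \R$ forces $e^{in\pi/k} \in \R$ whenever $c_n \neq 0$, that is, $k \mid n$. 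Hence $c_n = 0$ for every $n$ not divisible by $k$, so $\gcd(k, \{n : c_n \neq 0\}) \geq k$. Primitivity then requires $k = 1$.

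With $k_i = 1$ for all $i$, each irreducible component of the zero set is the graph of an analytic function $y = \phi_i(x)$, and the computation in the previous paragraph already establishes that $\phi_i$ has real Taylor coefficients. Since $u(0,0) \neq 0$, the zero set of $P$ in a neighborhood of the origin coincides with that of $W$, which is the union of the smooth curves $\{(x, \phi_i(x))\}$ for $i = 1, \ldots, r$; these are parametrized by real-coefficient convergent power series, as required.

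The main obstacle is the rigidity step in the second paragraph: showing that realness on the $2k$ angular rays $\{t^k \in \R\}$ forces all exponents in the Puiseux expansion to be divisible by $k$. Once this is accomplished, the primitivity condition built into the Puiseux parametrization collapses the ramification index to $1$ and the rest of the argument is immediate.
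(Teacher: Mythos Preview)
Your proof is correct and follows the same overall architecture as the paper: Weierstrass preparation, factorization into irreducible Puiseux branches, and then a rigidity argument showing each branch must be unramified with real coefficients. The paper deduces Theorem~\ref{facerealbranches} from Proposition~\ref{locruhp}, which is built to handle the harder $\overline{\uhp}$ case of Theorem~\ref{facebranches} and relies on the arithmetic-progression Lemma~\ref{proglemma}; the real case then falls out as the $b=0$ specialization. Your argument bypasses all of that machinery: for the purely real hypothesis you only need the two angles $m=0$ (forcing $c_n\in\R$) and $m=1$ (forcing $k\mid n$ when $c_n\neq 0$), after which primitivity immediately gives $k=1$. This is shorter and more elementary for the statement at hand; the paper's route is longer here only because it is organized to prove the stronger Theorem~\ref{facebranches} simultaneously.
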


Let us proceed into the details of Theorems \ref{facebranches} and \ref{facerealbranches}.
First, in either theorem we can apply the Weierstrass preparation theorem (since 
$P(0,y) \not \equiv 0$) and write
\[
P(x,y) = u(x,y) P_0(x,y)
\]
where $P_0(x,y) = \C\{x\}[y]$ is a Weierstrass polynomial of degree
$M := \text{Ord}(P(0,y))$ in $y$ and $u(x,y) \in \C\{x,y\}$
is non-vanishing at $(0,0)$.
As in the case of $\uhp^2$ studied in \cite{BKPS}, 
we can analyze the zero
set of $P_0(x,y)$ near $(0,0)$ 
via the Newton-Puiseux theorem and produce a factorization 
of $P_0$ into Puiseux series.
The branches of $P$ going through $(0,0)$
can be locally parametrized via $t\mapsto (t^N, -\phi(t))$
for analytic $\phi$.
For the branches to avoid $\R\times \uhp$,
we must have the property that whenever $t^N \in \R$ then
$\phi(t) \in \overline{\uhp}$.  Therefore, the following
proposition is really about describing
the branches of polynomials
with no zeros in $\R\times \uhp$.

\begin{prop} \label{locruhp}
Let $N\geq 1$ and let $\phi(t) \in \C\{t\}$ with $\phi(0)=0$.
If $\phi(t) \in \overline{\uhp}$ whenever $t^N\in \R$ then
there exists $\psi_0(t) \in \R\{t\}$ and $L\geq 1$ such that
\[
\phi(t) = \psi_0(t^N) + i b t^{2LN} \psi_1(t)
\]
where $b\geq 0$, $\psi_1\in \C\{t\}$, $\psi_1(0)=1$.  

Moreover, $\phi$ has the property that $\phi(t) \in \R$ whenever $t^N \in \R$
if and only if $b=0$ above; namely, 
\[
\phi(t) = \psi_0(t^N)
\]
and the $\psi_1$ term is unnecessary.
\end{prop}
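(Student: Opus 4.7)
The plan is to extract the ``real, $t^N$-symmetric'' part of $\phi$ explicitly and then pin down the remainder by analyzing its leading term on the $2N$ rays $\{t^N\in\R\}$. Expand $\phi(t)=\sum_{j\geq 1}c_j t^j$ and set
\[
\psi_0(s) := \sum_{m\geq 1}\Re(c_{mN})\, s^m \in \R\{s\}, \qquad \eta(t) := \phi(t) - \psi_0(t^N),
\]
so that $\eta(t)=\sum_{j\not\equiv 0\,(\bmod N)} c_j t^j + \sum_{m\geq 1} i\,\Im(c_{mN})\, t^{mN}$. Let $\omega := e^{i\pi/N}$; the rays $\{t^N\in\R\}$ are precisely $\{r\omega^k:r>0,\ 0\leq k\leq 2N-1\}$. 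Since $\psi_0((-1)^k r^N)\in\R$, the hypothesis reduces to
\[
\Im\eta(r\omega^k)\geq 0 \quad\text{for all small } r>0 \text{ and } k=0,1,\dots,2N-1.
\]

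Assuming $\eta\not\equiv 0$, let $j_0$ be the order of $\eta$ and $c_{j_0}$ its leading coefficient. The plan is to show that $j_0=2LN$ for some $L\geq 1$ and that $c_{j_0}=ib$ with $b>0$. The leading term of $\Im\eta(r\omega^k)$ is $\Im(c_{j_0}\omega^{j_0 k})\,r^{j_0}$, required to be $\geq 0$ for every $k$. Summing over $k$ and using $\sum_{k=0}^{2N-1}\omega^{j_0 k}=0$ whenever $\omega^{j_0}\neq 1$, these nonnegative numbers sum to zero, so $c_{j_0}\omega^{j_0 k}\in\R$ for each $k$. When $j_0\not\equiv 0\pmod N$, the subgroup $\langle\omega^{j_0}\rangle$ of the $(2N)$-th roots of unity has order $2N/\gcd(j_0,2N)\geq 3$, producing three distinct unimodular $\zeta$ with $c_{j_0}\zeta\in\R$; since at most two unimodular numbers can lie on a single line through the origin, this forces $c_{j_0}=0$, contradicting the choice of $j_0$. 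Thus $j_0=mN$, in which case $c_{j_0}=i\Im(c_{mN})$ and the leading term of $\Im\eta(r\omega^k)$ is $\Im(c_{mN})(-1)^{mk}r^{mN}$. Comparing $k=0$ and $k=1$ forces $m$ even, so $m=2L$ with $L\geq 1$, and nonnegativity at $k=0$ gives $b:=\Im(c_{2LN})>0$. Factoring out the leading monomial yields $\eta(t)=ib\, t^{2LN}\psi_1(t)$ with $\psi_1\in\C\{t\}$ and $\psi_1(0)=1$. If instead $\eta\equiv 0$, take $b=0$ and $\psi_1\equiv 1$.

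For the ``moreover'' I will reuse the same analysis. If $\phi(t)\in\R$ whenever $t^N\in\R$, then $\Im\eta\equiv 0$ on every ray; on the $k=0$ ray, the leading coefficient would have to vanish, but the case analysis just given forces this leading coefficient to equal $b>0$ whenever $\eta\not\equiv 0$. Hence $\eta\equiv 0$ and $\phi=\psi_0(t^N)$. The converse is immediate since $\psi_0$ has real coefficients.

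The main obstacle I anticipate is the roots-of-unity averaging step: one must carefully verify that the orbit $\{\omega^{j_0 k}\}$ genuinely contains at least three distinct elements precisely when $j_0\not\equiv 0\pmod N$ (equivalently when $\gcd(j_0,2N)<N$), so that the geometric observation ``three unimodular points cannot share a line through the origin'' can be deployed to force $c_{j_0}=0$. Everything else is routine extraction of leading terms from one-variable real-analytic nonnegativity together with a short parity check.
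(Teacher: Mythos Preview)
Your proof is correct and follows essentially the same strategy as the paper: subtract off the real part $\psi_0(t^N)$, then analyze the leading term of the remainder on the $2N$ rays $\{t^N\in\R\}$ to force the leading index to be a multiple of $2N$ with purely imaginary coefficient $ib$, $b>0$. The only real difference is in the key technical step: where the paper encodes the constraints as an arithmetic progression $(j_0/N)\mathbb{Z}+\alpha\subset[0,1]+2\mathbb{Z}$ and invokes a separate lemma classifying such progressions, you instead average $\Im(c_{j_0}\omega^{j_0 k})\ge 0$ over $k$ to force $c_{j_0}\omega^{j_0 k}\in\R$ and then observe that three distinct unit complex numbers cannot all lie on one line through the origin---a self-contained argument that avoids the auxiliary lemma but lands at the same conclusion.
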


In the case where $b>0$, 
we can absorb terms of $\psi_0(t^N)$ of degree 
greater than $2LN$ into the term $\psi_1(t)$ in order to rewrite
\[
\phi(t) = q(t^N) + t^{2LN} \psi(t)
\]
where $q\in \R[t]$, $\deg q < 2L$, $q(0)=0$, and $\Im\, \psi(0) > 0$.  
Looking at Theorem \ref{facebranches} where $P$ has no factors in common 
with $\refl{P}$, we see that all of the branches must have this form with $b>0$, namely
\[
0=y + q(x) + x^{2L}\psi(x^{1/N})
\]
where $\Im \psi(0) >0$
because otherwise we would have a whole family of branches with
real coefficients and this would mean $P$ and $\refl{P}$ would have a
common factor.
Theorem \ref{facebranches} is then a consequence of 
Proposition \ref{locruhp} because the conclusion can be rewritten in the form of
a factorization.  

In the case of $b=0$ we can dispense with Puiseux series
and obtain a factorization into analytic curves.
If $t\mapsto (t^N, -\phi(t))$ is an irreducible Puiseux parametrization with the
property that ``whenever $t^N$ is real then $\phi(t)$ is real'', 
then by Proposition \ref{locruhp} and
the assumption of irreducibility we see that $N=1$ and $\phi$ has only real coefficients.
Such a branch is simply the graph of an analytic function with real coefficients.
If all branches have this property, then we can break up our variety into smooth
branches.  This is the situation of Theorem \ref{facerealbranches},
which is therefore also a consequence of Proposition \ref{locruhp}.

To prove Proposition \ref{locruhp} we need the following lemma.

\begin{lemma}\label{proglemma}
Let $d>0$ and $a \in [0,1]$.
Suppose we have an arithmetic progression that avoids every other open unit interval:
\[
d \ints + a \subset [0,1] + 2\ints = \bigcup_{j\in \ints} [2j, 2j+1].
\]
Then, either 
\begin{itemize}
\item $d$ is an even integer $2m$ or
\item $d$ is an odd integer $2m+1$ and $a \in \{0,1\}$.
\end{itemize}
Both of these situations produce arithmetic progressions avoiding
every other closed unit interval so that this is a complete characterization.
\end{lemma}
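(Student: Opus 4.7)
The plan is to pass to the quotient group $\R/2\ints$. Under reduction modulo $2$, the target set $[0,1] + 2\ints$ is exactly the closed arc $[0,1] \subset \R/2\ints$, whose complement is the open arc $(1,2)$ of length $1$. The hypothesis then becomes: the coset $a + H$ lies entirely inside the closed arc $[0,1]$, where $H \leq \R/2\ints$ is the subgroup generated by the image of $d$.

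First I would rule out the possibility that $H$ is infinite. By the standard classification of closed subgroups of the circle, if $H$ were not finite then its closure would be all of $\R/2\ints$, forcing the coset $a + \overline{H}$ to meet the open arc $(1,2)$, contrary to hypothesis. Hence $H$ is finite, so $d/2$ is rational; write $d/2 = p/q$ in lowest terms with $q \geq 1$ and $\gcd(p,q) = 1$. Since $\gcd(p,q)=1$, as $k$ ranges over $\ints$ the residue $kp \bmod q$ covers $\{0,1,\dots,q-1\}$, and so $H = \{2j/q \bmod 2 : 0 \leq j < q\}$ consists of $q$ equally spaced points with successive gap $2/q$.

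The core geometric observation is that $q$ equally spaced points on a circle of circumference $2$ fit inside some closed arc of length $1$ only when $q \leq 2$, since the minimal arc containing them has length $2 - 2/q$. Thus $q \in \{1,2\}$. If $q = 1$ then $d = 2p$ is even and any $a \in [0,1]$ is allowed. If $q = 2$ then $p$ must be odd, so $d$ is an odd integer; moreover the two elements of $H$ are $0$ and $1$, and the requirement $a + H \subset [0,1]$ forces $a \in \{0,1\}$. This establishes the stated dichotomy.

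For the converse, a direct check shows that $\{2mk + a : k \in \ints\} \subset a + 2\ints \subset [0,1] + 2\ints$ when $a \in [0,1]$, and that $\{(2m+1)k + a : k \in \ints\}$ with $a \in \{0,1\}$ alternates between $2\ints$ and $1 + 2\ints$, both inside $[0,1] + 2\ints$. I do not anticipate a serious obstacle; the only step requiring a moment of thought is the geometric claim above, which follows from noting that consecutive points at gap $2/q$ on a circle of circumference $2$ leave a maximal gap also equal to $2/q$, so the minimal covering arc has length $2 - 2/q$.
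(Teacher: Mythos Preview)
Your proof is correct and takes a genuinely different route from the paper's. The paper argues directly in $\R$: it first shows $d \geq 1$ by locating the smallest element of the progression exceeding $1$, then finds $n$ with $a+d-2n \in [0,1]$ so that $|d-2n| \leq 1$; since the progression with step $|d-2n|$ still lies in $[0,1]+2\ints$, the first step forces $|d-2n| \in \{0,1\}$, and the odd case $d=2n\pm1$ is finished by checking $a+1 \in [0,1]+2\ints$. Your argument instead passes to $\R/2\ints$, invokes density of infinite cyclic subgroups to force $H$ finite, and then uses the pigeonhole-type observation that $q$ equally spaced points on a circle of circumference $2$ occupy a minimal arc of length $2-2/q$. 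The paper's proof is entirely self-contained and needs no structure theory of the circle, while yours is more conceptual and makes the geometric reason for the dichotomy transparent; it would also adapt more readily to variants of the problem. Either way the argument is short, and both land on the same case split $q\in\{1,2\}$ (equivalently $|d-2n|\in\{0,1\}$).
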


\begin{proof}
We must have $d\geq 1$.  Indeed, if $b$ is the smallest element of $d\ints + a$
 greater than $1$
then $b\geq 2$ while $b-d \leq 1$ so that $d \geq 1$.

There exists $n$ such that $a+d-2n \in [0,1]$ so that $|a- (a+d-2n)|  = |d-2n| \leq 1$.
But then 
\[
|d-2n|\ints + a \subset [0,1] + 2\ints
\]
and this again implies either $d=2n$ or $|d-2n| \geq 1$ (so $|d-2n|=1$).
In the first case $d$ is even as claimed and in the second case $d$ is odd.
When $d$ is odd, say $d=2m+1$, then $a+2m+1-2m \in [0,1] +2\ints$.
So either $a=0$ or $a=1$.  
\end{proof}

\begin{proof}[Proof of Proposition \ref{locruhp}]
Write $\phi(t) = \sum_{j = 1}^{\infty} a_j t^j$ and
define
\[
\psi_0(t^N) := \sum_{j=1}^{\infty} \Re(a_{jN}) t^{jN}.
\]
We can replace $\phi(t)$ with
\[
\phi(t) - \psi_0(t^N)
\]
since this does not affect the hypotheses of the
proposition.  
If $\phi$ is now identically zero we are finished
and can take $b=0$.
So, suppose $\phi$ is not zero and
 let $a_k t^k$ be the first nonzero term of $\phi(t) = a_k t^k  + \text{ higher order}$.
The condition ``$t^N\in \R$ implies $\phi(t) \in \overline{\uhp}$''
implies that $k$ is a multiple of $N$.
Indeed, for $t = r e^{i\pi m/N}$ where $m \in \ints$
and $a_k = |a_k| e^{i\pi \alpha}$ with $\alpha \in [0,2)$
\[
\lim_{r \searrow 0} \frac{\phi(r e^{i\pi m/N})}{r^k} = |a| e^{i\pi(\alpha + km/N)} \in \overline{\uhp}
\]
so that
\[
mk/N + \alpha \in [0,1] + 2 \ints
\]
for $m\in \ints$.
In particular, $\alpha \in [0,1]$.
In other terms, 
\[
(k/N)\ints + \alpha \subset [0,1] + 2 \ints,
\]
i.e.\ an arithmetic progression avoiding every other open
unit interval.
By Lemma  \ref{proglemma}, $k/N=n$ must be a positive integer.
Note that we have removed the real parts of the coefficients of $\phi$
for powers of $t$ that are multiples of $N$.  We conclude
$a_k$ is purely imaginary; i.e. $a_k=ib$ for $b\in \R$.
Thus, $\alpha = 1/2$ (so $b>0$) and again by Lemma \ref{proglemma} 
we must have $k/N$ is even, say $k = 2LN$ (since the case
where $k/N$ is odd is ruled out by the condition $a \in \{0,1\}$ in the lemma).
Altogether $\phi(t)$ has the form
\[
\phi(t) = ib t^{2LN} + \text{ higher order}
\]
with $b>0$ and this is exactly the form of $ib t^{2LN} \psi_1(t)$ with $\psi_1(0)=1$ as
claimed in the proposition.

Finally, for $t^N\in \R$, the imaginary part of $\phi(t)$ is $t^{2LN}\Im(ib \psi_1(t))$
and this is non-zero unless $b=0$.  Thus, $\phi(t)$ is real whenever $t^N$ is
real exactly when $\phi$ has the claimed form.
\end{proof}

One of the main applications of Theorem \ref{facebranches} 
is the computation of the intersection multiplicity of $(0,0)$ as a
common zero of $P$ and $\refl{P}$.
We shall use standard formulas and the approach from
Fulton \cite{Fulton} (Chapter 3) to do this now, however later on 
(Section \ref{sec:basis}) we need
an explicit basis for $\C\{x,y\}/(P,\refl{P})$ that allows us to address
integrability and therefore we essentially rederive some of
the foundational material in \cite{Fulton} for completeness or lack of a 
suitable reference.

Let $N(Q_1,Q_2)$ be the intersection multiplicity at $(0,0)$
of $Q_1,Q_2 \in \C\{x,y\}$.
By standard formulas in \cite{Fulton}, we have
\[
N(P,\refl{P}) = \dim(\C\{x,y\}/(P,\refl{P})) =  
\sum_{i,j=1}^{M}\text{Ord}(q_j(x)+ x^{2L_j}\psi_{j}(x^{1/k}) 
- q_i(x) - x^{2L_i}\overline{\psi_i(\bar{x}^{1/k})}).
\]
For a fixed pair $i,j$, 
since $\Im \psi_j(0), \Im \psi_i(0) > 0$, the summand equals
\[
\min\{ \text{Ord}(q_j(x)-q_i(x)), 2L_j, 2L_i\}.
\]
Let us recall the following notation.

\Oij*

Note now that
\[
N(P,\refl{P}) = \sum_{i,j=1}^{M} O_{ij} = \sum_{j=1}^{M} 2L_j + 2 \sum_{(i,j): i<j} O_{ij}
\]
which is evidently even.

\subsection{Matching Branches}

For this section we \emph{do} need to assume $P$ 
is a polynomial and not simply analytic. 
If $P$ is analytic we can form $[P]$
just as in Theorem \ref{brackthm} and
work with $[P]$ instead.

As discussed in the outline, the real and imaginary
parts of $P \in \C[x,y]$ with no zeros on $\R\times \uhp$
and no factors in common with $\refl{P}$
have a variety of useful properties.
Recall the
real and imaginary coefficient polynomials
\[
A = \frac{1}{2}(P+\refl{P}), \qquad B = \frac{1}{2i}(P-\refl{P}).
\]
If $M = \text{Ord}(P(0,y))$ it is
convenient to normalize the coefficient
of $y^M$ in $P(0,y)$ to be real so
that $\text{Ord}(A(0,y)) = M$ and
$\text{Ord}(B(0,y)) > M$.

It is not difficult to establish using 
Theorem \ref{onevarfacts} to come later 
and Hurwitz's theorem
for $t\in \R$ that $A+tB \in \R[x,y]$ has no zeros
in $\R\times (\C\setminus \R)$
with the exception of finitely many
vertical lines $x=x_0$ for $x_0\in \R$.
Since $\text{Ord}(A(0,y)) = \text{Ord}((A+tB)(0,y)) = M$,
$A+tB$ does not vanish
on the line $x=0$ and therefore
$A+tB$ is non-vanishing on a neighborhood
of $(0,0)$ intersected with $\R\times (\C\setminus \R)$.
Therefore, Theorem \ref{facerealbranches} applies
to $A+tB$ and we can factor
$A+tB$ into $M$ analytic branches $y+\phi(x)=0$ 
with $\phi(x) \in \R\{x\}$. 
What requires more work is showing
that generically with respect to $t \in \R$ 
these analytic branches match up 
in a precise way with the potentially
non-analytic branches of $P$.
Our goal for this section is the following.

\begin{restatable}{theorem}{matchbranches} \label{matchbranches}
Assume $P(x,y) \in \C[x,y]$ has no zeros in $\R\times \uhp$ and
no factors in common with $\refl{P}$.  Let $M = \deg(P(0,y))$
and we normalize $P$ so that the coefficient of $y^M$
is real.

    Consider the conclusion of Theorem \ref{facebranches}
    including the data $2L_1,\dots, 2L_M$, $q_1(x),\dots, q_M(x)$ associated to $P$.
    Let 
    \[
A = \frac{1}{2}(P+\refl{P}) \qquad B = \frac{1}{2i}(P-\refl{P}).
\]
    Then, for all but finitely many $t\in \R$, we can locally factor 
\begin{equation}\label{AtBfactor}
A(x,y) + t B(x,y) = u(x,y;t) \prod_{j=1}^{M} (y + q_j(x) + x^{2L_j} \psi_{j}(x;t))
\end{equation}
where for each fixed $t$, $u(x,y;t) \in \C\{x,y\}$ is a unit, $\psi_{j}(x;t) \in \R\{x\}$.
This decomposition cannot be improved in the sense that for any $i\ne j$ 
\begin{equation} \label{Oijeq}
\text{Ord}(q_j(x) - q_i(x) + x^{2L_j}\psi_{j}(x;t) - x^{2L_{i}} \psi_{i}(x;t)) 
= \min\{ \text{Ord}(q_j(x)-q_i(x)), 2L_j, 2L_i\}
\end{equation}
\end{restatable}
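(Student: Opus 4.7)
The plan is to exploit the algebraic identity
\[
A + tB = \tfrac{1-it}{2}\, P + \tfrac{1+it}{2}\, \refl{P}, \qquad t \in \R,
\]
which realizes the zero set of $A + tB$ as a perturbation of the zero set of $P$. A local factorization of $A + tB$ into $M$ real-analytic branches $y + s_j(x;t) = 0$ with $s_j(\cdot;t) \in \R\{x\}$ is immediate from Theorem \ref{facerealbranches}, given that $A + tB$ has no zeros in $\R \times (\C \setminus \R)$ near $(0,0)$ (as indicated just before the theorem statement) and that $\text{Ord}(A(0,y)) = M$ ensures the $y$-degree of $A + tB$ near the origin equals $M$.

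The crux is to match these branches with the Puiseux branches $y = r_j(x) := -q_j(x) - x^{2L_j}\sigma_j(x^{1/k})$ of $P$ furnished by Theorem \ref{facebranches}, where $\Im\sigma_j(0)>0$, and to pin down the size of each perturbation. Substituting $y = r_j(x) + \epsilon$ into $A + tB = 0$ and using $P(x,r_j(x))=0$ gives, to leading order in $\epsilon$,
\[
\epsilon\cdot P_y(x, r_j(x)) + \tfrac{1+it}{1-it}\,\refl{P}(x, r_j(x)) = O(\epsilon^2).
\]
Using the product expressions $P_y(x,r_j) = \text{unit}\cdot\prod_{i\neq j}(r_j - r_i)$ and $\refl{P}(x,r_j) = \text{unit}\cdot\prod_i(r_j - \bar r_i)$, together with the orders $\text{Ord}(r_j - \bar r_j) = 2L_j$ and $\text{Ord}(r_j - \bar r_i) = \text{Ord}(r_j - r_i) = O_{ij}$ for $i\neq j$, one computes $\text{Ord}\,\epsilon_j = 2L_j$, with a leading coefficient that is a nonzero $\R$-rational function of $t$ (carrying the factor $\tfrac{1+it}{1-it}$). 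Setting $s_j(x;t) := -r_j(x) - \epsilon_j(x;t)$, the difference $s_j - q_j = x^{2L_j}\sigma_j(x^{1/k}) - \epsilon_j$ is a priori a Puiseux series, but since $s_j - q_j \in \R\{x\}$ all fractional-degree terms must cancel, and Puiseux order $\geq 2L_j$ forces analytic order $\geq 2L_j$. Writing $s_j - q_j = x^{2L_j}\psi_j(x;t)$, we obtain $\psi_j(\cdot;t) \in \R\{x\}$ with $\psi_j(0;t)$ a non-constant real-rational function of $t$, hence nonzero outside a finite set. This yields \eqref{AtBfactor}.

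For \eqref{Oijeq}, the quantity equals $s_j(x;t) - s_i(x;t) = (r_j - r_i) + (\epsilon_j - \epsilon_i)$. The first summand has order exactly $O_{ij}$ while the second has order at least $\min\{2L_j, 2L_i\} \geq O_{ij}$, so the total order is at least $O_{ij}$. Equality fails only when $O_{ij} = \min\{2L_j, 2L_i\}$ and the leading coefficients of the two summands cancel exactly, an algebraic constraint on $t$ satisfied only finitely often; taking the union over $j$ and over pairs $(i,j)$ keeps the exceptional set finite. The main obstacle will be verifying that $\psi_j(0;t)$ and the cancellation coefficients in the second step are genuinely non-constant functions of $t$, so that their zero sets really are finite. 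Both reduce to tracking the explicit leading-coefficient calculation and observing that the imaginary part $b_j = \Im\sigma_j(0) > 0$ guarantees nontrivial dependence on the unimodular factor $\tfrac{1+it}{1-it}$.
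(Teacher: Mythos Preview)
Your perturbation idea is attractive and genuinely different from the paper's route, but it has a real gap in the case where some $O_{kj}=2L_j$ for $k\neq j$, i.e.\ where another branch datum $(2L_k,q_k)$ extends or equals $(2L_j,q_j)$. In that situation the first-order Newton step you write down,
\[
\epsilon\cdot P_y(x,r_j) + \tfrac{1+it}{1-it}\,\refl{P}(x,r_j)=O(\epsilon^2),
\]
does \emph{not} determine the leading coefficient of $\epsilon$. The $n$-th order term $\epsilon^n\,\partial_y^n(A+tB)(x,r_j)$ has order at least $2nL_j+\sum_{i\neq j}O_{ij}-(n-1)\max_{k\neq j}O_{kj}$; when $\max_{k\neq j}O_{kj}=2L_j$ this equals $2L_j+\sum_{i\neq j}O_{ij}=\text{Ord}\,\refl{P}(x,r_j)$ for every $n$, so all Taylor terms compete at the same order. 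Example~\ref{exex} already exhibits this: for $P=(y+x+ix^2)(y+x+ix^4)$ and $j=2$, your formula predicts $\text{Ord}\,\epsilon_2=4$, but at $t=0$ the actual perturbation satisfies $(\epsilon-ix^4)^2=x^6$, so $\text{Ord}\,\epsilon_2=3$ and neither resulting branch has the form $-x-x^4\psi_2$. The difficulty is not merely that $\psi_j(0;t)$ might vanish at isolated $t$; for repeated data the quantity $\psi_j(0;t)$ is not even defined by your first-order equation. (A smaller point: the coefficient of $\epsilon$ should be $(A_y+tB_y)(x,r_j)$, not $P_y$ alone; the orders agree generically but the distinction matters here.)

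The repeated-datum case is exactly the hard case in the paper's proof, and the paper handles it by a different mechanism: for two branches of $A+tB$ sharing a datum $(2L,q)$, the monotonicity of $y\mapsto A(x,y)/B(x,y)$ (Theorem~\ref{onevarfacts}) traps a branch of $A+sB$ for every $s\neq t$ strictly between them; since one of these is generically a segment of $P$, and $t$ was chosen so that its $(2L,q)$-branches do \emph{not} extend to longer $P$-segments, one forces $\psi_j(0;t)\neq\psi_i(0;t)$. Your approach works cleanly when all the $O_{kj}$ with $k\neq j$ are strictly less than $2L_j$, and could replace the easier cases of the paper's case analysis; but to close the argument you would still need an additional idea---likely the same monotonicity trick, or a direct analysis of the higher-degree polynomial in the leading coefficient of $\epsilon$---to separate branches with coinciding segment data.
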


The right side equals the expression $O_{ij}$ from Definition \ref{Oij}.

The content of Theorem \ref{matchbranches} is subtle 
and we will carefully explain it below.
The quick explanation is that we have a reasonable
way to match up the branches of $A+tB$ with the branches
of $P$ in a way to avoid certain collisions.
This is expressed with the technical conclusion \eqref{Oijeq}.
In particular, if $i\ne j$ but $q_j=q_i$ and $2L_j=2L_i$ then 
the only way for \eqref{Oijeq} to hold is that
\[
\psi_{j}(0;t) \ne \psi_i(0;t).
\]
Something similar to this is proven in the paper \cite{Anderson} (see Lemma 5.8).  
We discuss this connection further in Remark \ref{Andersonremark}.

\begin{restatable}{definition}{proper} \label{proper}
We will refer to the $t\in \R$ for which \eqref{AtBfactor} and \eqref{Oijeq}
hold as \emph{proper}.
\end{restatable}

Recall from Example \ref{exex} that the exceptional ``non-proper'' behavior can 
indeed occur. 
The following corollary lets us precisely compute the order
of vanishing of $B$ along branches of $A+tB$.

\begin{restatable}{corollary}{Bvanish} \label{Bvanish}
  Assume the setup and conclusion of Theorem \ref{matchbranches}.
  Then, for proper $t \in \R$, $j=1,\dots, M$, and $O_{ij}$ as in Definition \ref{Oij},
  \[
    \text{Ord}(B(x, -q_j(x)- x^{2L_j} \psi_{j}(x;t))) = \sum_{i=1}^{M} O_{ij} = 2L_j + \sum_{i: i\ne j} O_{ij}
    \]
    \[
      \text{Ord}((A_y+tB_y)(x, -q_j(x) - x^{2L_j} \psi_{j}(x;t))) = \sum_{i: i\ne j} O_{ij}.
    \]    
\end{restatable}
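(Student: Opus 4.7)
The plan is to differentiate the factorization \eqref{AtBfactor} with the product rule and evaluate at $y = a_j(x;t) := -q_j(x) - x^{2L_j}\psi_j(x;t)$, the $j$-th root of $A+tB$ singled out by Theorem \ref{matchbranches}. All terms in the product rule except the one that removes the $j$-th linear factor vanish at this point, leaving
\[
(A_y+tB_y)(x,a_j(x;t)) = u(x,a_j(x;t);t)\prod_{k\neq j}\bigl(a_j(x;t)-a_k(x;t)\bigr).
\]
Since $u$ is a unit and $a_j-a_k$ is, up to sign, exactly the expression whose order is controlled by \eqref{Oijeq}, the order on the right is $\sum_{k\neq j} O_{jk}$, as required.

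\textbf{The $B$ identity.} For the $B$ statement my strategy is to exploit the algebraic identity
\[
(1+t^2)B^2 = (A+iB)(A-iB) - (A+tB)(A-tB) = P\refl{P} - (A+tB)(A-tB).
\]
Evaluating at $y=a_j(x;t)$ kills the second product on the right, so
\[
(1+t^2)\,B(x,a_j(x;t))^2 = P(x,a_j(x;t))\,\refl{P}(x,a_j(x;t)).
\]
Taking orders of vanishing reduces the problem to computing $\text{Ord}(P(x,a_j))$ and $\text{Ord}(\refl{P}(x,a_j))$; I will show each equals $\sum_k O_{jk}$, which then yields $\text{Ord}(B(x,a_j)) = \sum_k O_{jk} = 2L_j + \sum_{k\neq j} O_{jk}$, using $O_{jj}=2L_j$.

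\textbf{The order of $P$ and $\refl{P}$ along the branch.} I would apply Theorem \ref{facebranches} to $P$ itself to write
\[
P(x,y) = u^P(x,y)\prod_{k=1}^M\bigl(y + q_k(x) + x^{2L_k}\phi_k(x^{1/K})\bigr),\qquad \Im\phi_k(0) > 0,
\]
and plug in $y=a_j(x;t)$. Each factor becomes $F_k = (q_k-q_j) - x^{2L_j}\psi_j(x;t) + x^{2L_k}\phi_k(x^{1/K})$. A short case analysis should give $\text{Ord}(F_k)=O_{jk}$: the reality of $q_k-q_j$ and of $\psi_j(x;t)$ combined with $\Im\phi_k(0) > 0$ rules out cancellation whenever $\phi_k$ contributes at the leading order (take imaginary parts), while in the one remaining subcase ($2L_j \leq \min(2L_k,\text{Ord}(q_k-q_j))$ with $j\neq k$, where only $-\psi_j(0;t)$ and possibly the leading term of $q_k - q_j$ would carry the leading order) the nonvanishing of the coefficient is exactly the content of \eqref{Oijeq} applied to $a_j-a_k$. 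Multiplying, $\text{Ord}(P(x,a_j)) = \sum_k O_{jk}$; the computation for $\refl{P}$ is identical since the conjugated branches satisfy $-\Im\phi_k(0) < 0$, which is all that is used. The main obstacle is this case analysis: each case is elementary, but one must carefully use the reality of $\psi_j$ and positivity of $\Im\phi_k(0)$ from Theorem \ref{facebranches}, along with the matching data \eqref{Oijeq}, at just the right moment to rule out accidental cancellations in the leading coefficient.
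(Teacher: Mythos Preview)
Your derivative computation is identical to the paper's. For the $B$ identity, your approach is correct but takes a small detour: the paper instead observes directly that at a root of $A+tB$ one has $A=-tB$, hence $P=A+iB=(i-t)B$, so $\text{Ord}(B(x,a_j))=\text{Ord}(P(x,a_j))$ immediately. Your identity $(1+t^2)B^2=P\refl{P}-(A+tB)(A-tB)$ gives the same conclusion after also computing $\text{Ord}(\refl{P}(x,a_j))$, which doubles the work but causes no difficulty since the argument for $\refl{P}$ is symmetric. The subsequent case analysis computing $\text{Ord}(P(x,a_j))=\sum_k O_{jk}$ from the factorization of $P$ is exactly what the paper does (it says only ``it is not hard to run through cases''), and your identification of \eqref{Oijeq} as the ingredient that rules out cancellation in the purely real subcase is correct and is the one nontrivial point.
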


\begin{proof}
  Since $P(x,-q_j(x) - x^{2L_j}\psi_{j}(x;t)) = (i-t) B(x,-q_j(x) - x^{2L_j}\psi_{j}(x;t))$
  we see that
  \[
    \text{Ord}(B(x, -q_j(x)- x^{2L_j} \psi_{j}(x;t))) =
      \sum_{i} \text{Ord}(q_i(x) - q_j(x) + x^{2L_i}\psi_i(x^{1/k}) - x^{2L_j}\psi_j(x;t)).
    \]
    Since $\Im\ \psi_i(0) > 0$ it is not hard to run through
    cases to see that the summand is equal to  $O_{ij}$ as in Definition \ref{Oij}.

    For the second claim, evaluating the partial derivative $A_y+tB_y$
    at a branch of $A+tB$ yields
    \[
      \text{Ord}((A_y+tB_y)(x, -q_j(x) - x^{2L_j} \psi_{j}(x;t)))
      =
      \sum_{i\ne j} \text{Ord} (q_i(x) - q_j(x) + x^{2L_i} \psi_i(x;t) - x^{2L_j}\psi_j(x;t))
    \]
    and the summands are again $O_{ij}$ by Theorem \ref{matchbranches}.
  \end{proof}

In \cite{BKPS}, the following method was used to track
the relationship between the branches of $P$ and the branches 
of $A+tB$.  
Let us call $q(x)\in \R[x]$ a (real) \emph{initial segment} (of a branch) of $P$
of order $n$ if $P$ has a branch $y + \phi(x) = 0$ such that
$q(x) - \phi(x)$ vanishes to order $n$ or higher.  
We can allow for $\phi(x)$ to be a Puiseux series,
say $\phi(x) = \phi_0(x^{1/N})$, as in \cite{BKPS}. 
Since the branches of $P$ have a clear form, namely $0=y+\phi(x)$ where
\[
\phi(x) =  q(x) + x^{2L}\psi(x^{1/N})
\]
where $q(x) \in \R[x]$, $\deg q(x) < 2L$, $\psi\in \C\{x\}$, $\Im \psi(0)>0$
(from Theorem \ref{facebranches}), we can observe the branch
of $P$ above has $q(x)$ as a \emph{maximal} real initial segment in the following sense:
for any other $\tilde{q}(x) \in \R[x]$,
\[
\text{Ord}(\phi(x) - \tilde{q}(x)) \leq 2L.
\]
This simply relies on the fact that $\Im \psi(0) \ne 0$.
There is an algebraic way to count the presence
of initial segments.
Let us recall

\Onq*

If $q(x) \in \R[x]$ then 
\[
O(n,q,P) = O(n,q, \refl{P})  \leq O(n,q, A+tB)
\]
with equality holding for all $t$ with at most one exception.

\begin{lemma}[Lemma 2.22 of \cite{BKPS}] \label{lem222}
Let $P(x,y) \in \C[x,y]$, $q \in \R[x]$, $n \in \mathbb{N}$. The quantity
\[
O(n+1,q,P)-O(n,q,P)
\]
counts the number of branches of $P$ (with multiplicity) such that $q$ is an initial
segment of order $n+1$.
For $A = (1/2)(P+\refl{P})$, $B =(1/(2i))(P-\refl{P})$,
this is equal to 
\[
O(n+1,q,A+tB) - O(n,q, A+tB)
\]
for all $t$ with at most $2$ exceptions and it counts the analogous quantity:
the number of branches of $A+tB$ with $q$ as an initial segment of 
order $n+1$.
\end{lemma}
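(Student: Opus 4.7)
The plan is to reduce both the single-polynomial statement and the $A+tB$ statement to a Puiseux factorization combined with a short algebraic identity. First I would apply Weierstrass preparation and write $P(x,y) = u(x,y) \prod_{j} (y + \phi_j(x))$ locally at $(0,0)$, where $\phi_j$ ranges over the Puiseux branches of $P$ through $(0,0)$, with each Galois orbit of size $N_i$ counted $N_i$ times, so that the total is $M = \text{Ord}(P(0,y))$. The unit $u$ contributes nothing to $x$-order, so under the substitution $y \mapsto x^n y - q(x)$ the $x$-order in question is that of $\prod_j (x^n y + \phi_j(x) - q(x))$. An orbit-by-orbit computation---factoring $x^{\min(n,\alpha_i)}$ out of each Puiseux factor in an orbit of common order $\alpha_i := \text{Ord}(\phi_j - q)$ and checking that nothing more survives in the resulting element of $\C\{x\}[y]$---yields the master formula
\[
O(n, q, P) = \sum_{j} \min\bigl(n,\ \text{Ord}(\phi_j(x) - q(x))\bigr).
\]
The first claim of the lemma is then immediate by telescoping: each summand contributes $1$ to $O(n+1, q, P) - O(n, q, P)$ exactly when $\text{Ord}(\phi_j - q) \geq n+1$, i.e., exactly when $q$ is an initial segment of $\phi_j$ of order $n+1$.

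To compare with $A+tB$, set $k = O(n, q, P)$ and write $P(x, x^n y - q(x)) = x^k G(x,y)$ with $G \in \C\{x\}[y]$ and $G(0, y) \not\equiv 0$. Because $q$ has real coefficients, the substitution $(x,y) \mapsto (x, x^n y - q(x))$ commutes with the complex conjugation on both variables, so $\refl{P}(x, x^n y - q(x)) = x^k \refl{G}(x, y)$. Setting $A_G := (G + \refl{G})/2$ and $B_G := (G - \refl{G})/(2i)$, we obtain
\[
(A + tB)(x, x^n y - q(x)) = x^k \bigl( A_G(x,y) + t\, B_G(x,y) \bigr),
\]
so $O(n, q, A+tB) > O(n, q, P)$ if and only if $A_G(0, y) + t B_G(0, y) \equiv 0$ as a polynomial in $y$. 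Since $G(0, y) = A_G(0, y) + i B_G(0, y) \not\equiv 0$, the two real-coefficient polynomials $A_G(0, y)$ and $B_G(0, y)$ are not both zero, so at most one real $t$ can achieve this cancellation.

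Applying the same dichotomy with $n$ replaced by $n+1$ gives at most one further exceptional $t$. Outside at most $2$ values of $t$ we therefore have both $O(n, q, A+tB) = O(n, q, P)$ and $O(n+1, q, A+tB) = O(n+1, q, P)$, and hence the two differences coincide. The interpretation of the common value as the number of branches of $A+tB$ (with Puiseux multiplicity) having $q$ as an initial segment of order $n+1$ follows by applying the master formula of the first paragraph directly to $A+tB$ in place of $P$; this is legitimate for every real $t$ because the normalization $\text{Ord}(A(0,y)) = M < \text{Ord}(B(0,y))$ forces $(A+tB)(0, y)$ to have $y$-order $M$ for all real $t$, so the Weierstrass--Puiseux framework applies uniformly.

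The only real obstacle I anticipate is the Puiseux multiplicity bookkeeping in the master formula---verifying that a Galois orbit of size $N_i$ with common order $\alpha_i$ contributes exactly $N_i\min(n,\alpha_i)$ to $O(n,q,P)$, and that no cancellation occurs when the individual Puiseux factors are assembled into an irreducible Weierstrass polynomial in $\C\{x\}[y]$. Once this is in place, the rest is essentially linear algebra: two real polynomials in $y$ that are not both zero can fail to be linearly independent for at most one value of the scalar $t$.
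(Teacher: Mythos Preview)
Your approach is exactly the branch-by-branch argument the paper alludes to (the paper omits the proof, citing \cite{BKPS}). Your master formula $O(n,q,P) = \sum_j \min\bigl(n, \text{Ord}(\phi_j - q)\bigr)$ is correct, and your argument for the $A+tB$ comparison is clean: since $q$ has real coefficients, conjugation commutes with the substitution $y\mapsto x^n y - q(x)$, and the at-most-one-exceptional-$t$ dichotomy for each of $n$ and $n+1$ follows from the observation that two real polynomials not both zero can be proportional for at most one scalar.

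One caveat on the telescoping step. Your claim that ``each summand contributes $1$ exactly when $\text{Ord}(\phi_j - q) \geq n+1$'' tacitly assumes the Puiseux orders $\alpha_j := \text{Ord}(\phi_j - q)$ are integers. If $n < \alpha_j < n+1$ for some branch, the summand is $\alpha_j - n \in (0,1)$, neither $0$ nor $1$; for instance $P = y^2 - x^3$, $q = 0$ gives $O(2,0,P) - O(1,0,P) = 3 - 2 = 1$ while neither Puiseux branch $\phi = \pm x^{3/2}$ satisfies $\text{Ord}(\phi) \geq 2$. This is really an imprecision in the lemma's parenthetical ``arbitrary polynomial'' claim rather than a flaw in your method: in every place the lemma is actually invoked (Proposition~\ref{branchprop} and the proof of Theorem~\ref{matchbranches}), $P$ has no zeros in $\R\times\uhp$ and no factor in common with $\refl{P}$, so by Theorem~\ref{facebranches} each $\alpha_j = \min\bigl(\text{Ord}(q_j - q),\, 2L_j\bigr)$ is an integer and your telescoping is exact. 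It would strengthen your write-up to flag this integrality hypothesis explicitly.
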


This lemma is stated in \cite{BKPS} in the context of polynomials
with no zeros in $\uhp^2$ but this fact is never used in the
proof and the lemma carries over without trouble to
$P$ with no zeros in $\R\times \uhp$ (or
even to an arbitrary polynomial).
The (omitted) proof is accomplished by looking
at $P$ branch by branch.

\begin{example}
As an example, consider $Q = (y+x+x^2+ix^4)(y+x+ix^4)$.
We computed in Example \ref{Onqex} that $O(3,x,Q(x,y)) = 5$.
To compute $O(4,x,Q(x,y))$ we look at
\[
Q(x, x^4y-x) = (x^4y + x^2 + ix^3)(x^4y + i x^4)
\]
which is divisible by $x^6$, so $O(4,x,Q(x,y))=6$.
So, $O(4,x,Q(x,y))- O(3,x,Q(x,y)) = 1$ counts
the one branch with $x$ as an initial segment of 
order $4$ or higher.
Also, $O(2,x,Q) = 4$ so we see 
$O(3,x, Q)-O(2,x,Q) = 1$ and we count
the same branch with $x$ as an initial segment of
order $3$ or higher.
Finally, since $O(1,x,Q) = 2$ we have
$O(2,x,Q)-O(1,x,Q) = 2$ gives the count
of two branches with $x$ as an initial segment of
order $2$ or higher. $\quad \diamond$
\end{example}

Before we begin the proof of Theorem \ref{matchbranches}
we need some preliminary observations.
Consider the \emph{set} of real segment data attached to $P$
\[
D(P) := \{ (2L_j, q_j(x)):  j=1,\dots, M\} \subset  2\mathbb{N} \times \R[x];
\]
namely, the pairs occurring in the product \eqref{Pfactored}
in Theorem \ref{facebranches}.  
Given a pair $(2L,q) \in D(P)$,
define
\[
S(2L, q, P) = \{ c\in \R: q(x) + cx^{2L} \text{ is a segment of } P 
\text{ of order } 2L+1 \text{ (or higher)}\}
\]
which gives us a way to track when $q(x)$
is extendable to a different element of $D(P)$.

Let $\mathcal{B}_t$ be the set of local branches 
of $A+tB$, each written as $y+\phi(x)$ with $\phi(x) \in \R\{x\}$.
We want to keep track of the branches of $A+tB$ that begin
with $q(x)$ but do not begin with any longer segment 
from $D(P)$.
Consider for $(2L,q) \in D(P)$, the set $\mathcal{B}_t(2L,q)$
of branches $y+\phi(x)$ of $A+tB$ such that
\begin{itemize}
\item $\text{Ord}(\phi(x) - q(x)) \geq 2L$, and
\item for any $c\in S(2L,q,P)$, $\text{Ord}(\phi(x) - (q(x) + cx^{2L})) = 2L$.
\end{itemize}

\begin{prop} \label{branchprop}
Assume the setup of Theorem \ref{matchbranches}.
Then, for all but finitely many $t\in \R$,
\[
\mathcal{B}_t = \bigcup_{(2L,q) \in D(P)} \mathcal{B}_t(2L,q)
\]
is a partition of $\mathcal{B}_t$
and the cardinality $\# \mathcal{B}_t(2L,q)$ is the number of
occurrences of $(2L,q)$ in the list $(2L_1,q_1),\dots, (2L_M,q_M)$
from Theorem \ref{facebranches}.
\end{prop}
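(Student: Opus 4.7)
The plan is to prove the partition in two parts: disjointness of the $\mathcal{B}_t(2L,q)$'s and a cardinality formula summing to $M = \#\mathcal{B}_t$. Throughout, I restrict to $t \in \R$ outside a finite exceptional set, namely the union of the exceptions from Lemma \ref{lem222} applied to each pair $(n,r)$ with $r$ a truncation modulo $x^n$ of some $q_j$ and $n \leq 1 + \max_j 2L_j$ (finitely many pairs, each contributing at most two exceptions), together with the finitely many $t$ at which $A+tB$ acquires a repeated irreducible factor near $(0,0)$ (a generic condition, since $A,B$ share no common factor, as follows from $P, \refl{P}$ being coprime). For these $t$, Theorem \ref{facerealbranches} yields $M$ distinct analytic branches $\{y+\phi(x)=0 : \phi \in \R\{x\}\}$, so $\#\mathcal{B}_t = M$.

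For disjointness, suppose $\phi \in \mathcal{B}_t(2L,q) \cap \mathcal{B}_t(2L',q')$ with $(2L,q) \neq (2L',q')$ and, without loss of generality, $2L \leq 2L'$. Since $\phi$ approximates both $q$ and $q'$ to their respective orders, $q \equiv q' \pmod{x^{2L}}$; combined with $\deg q < 2L$ and $\deg q' < 2L'$, this forces $q=q'$ when $2L=2L'$, a contradiction. If $2L < 2L'$, let $c$ be the $x^{2L}$ coefficient of $q'$; then $q+cx^{2L}$ agrees with $q'$ modulo $x^{2L+1}$, so it is an initial segment of order at least $2L+1$ of the $P$-branch associated to $(2L',q')$, placing $c \in S(2L,q,P)$. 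But $\phi \in \mathcal{B}_t(2L',q')$ gives $\text{Ord}(\phi - q - cx^{2L}) \geq 2L+1$, contradicting $\phi \in \mathcal{B}_t(2L,q)$.

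For the cardinality, let $\mathcal{B}_t^{\geq n}(r) = \{\phi \in \mathcal{B}_t : \text{Ord}(\phi - r) \geq n\}$. The definition of $\mathcal{B}_t(2L,q)$, together with the inclusion $\mathcal{B}_t^{\geq 2L+1}(q+cx^{2L}) \subseteq \mathcal{B}_t^{\geq 2L}(q)$ and the fact that these subsets are pairwise disjoint over $c \in S(2L,q,P)$ (distinguished by the $x^{2L}$ coefficient of $\phi$), gives
\[
\#\mathcal{B}_t(2L,q) = \#\mathcal{B}_t^{\geq 2L}(q) - \sum_{c \in S(2L,q,P)} \#\mathcal{B}_t^{\geq 2L+1}(q+cx^{2L}).
\]
By Lemma \ref{lem222}, each count on the right equals the corresponding count for the branches of $P$. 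Using the factorization from Theorem \ref{facebranches} and the key fact that $\Im \psi_j(0) > 0$ implies $\text{Ord}(\phi_j - r) = \min(\text{Ord}(q_j - r),\, 2L_j)$ for any $r \in \R[x]$ (where $\phi_j = q_j + x^{2L_j}\psi_j(x^{1/k})$ parametrizes the $j$-th branch of $P$), one computes that the first count for $P$ equals $\#\{j : 2L_j \geq 2L,\ q_j \equiv q \pmod{x^{2L}}\}$ while the subtracted sum equals $\#\{j : 2L_j \geq 2L+1,\ q_j \equiv q \pmod{x^{2L}}\}$. Their difference is $\#\{j : 2L_j = 2L,\ q_j = q\}$, which is exactly the number of occurrences of $(2L,q)$ in the list $(2L_1,q_1),\ldots,(2L_M,q_M)$.

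Summing over $(2L,q) \in D(P)$ yields $\sum \#\mathcal{B}_t(2L,q) = M = \#\mathcal{B}_t$; combined with disjointness, this forces each branch to lie in exactly one $\mathcal{B}_t(2L,q)$, completing the partition. The main obstacle is managing the finite exceptional set through the several applications of Lemma \ref{lem222} to the relevant truncations and performing the combinatorial calculation showing the subtraction telescopes correctly; the essential geometric input is $\Im \psi_j(0) > 0$, which caps real-coefficient approximations at $2L_j$ and thereby distinguishes branches of $P$ sharing the same real initial segment.
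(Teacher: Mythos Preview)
Your proof is correct and follows essentially the same approach as the paper's. Both arguments establish disjointness by the same case analysis, then use Lemma \ref{lem222} to transfer the branch counts from $A+tB$ to $P$, compute the count for $P$ via the explicit factorization of Theorem \ref{facebranches}, and conclude by summing to $M$. Your version is phrased in terms of the sets $\mathcal{B}_t^{\geq n}(r)$ while the paper introduces the quantity $M(2L,q,\cdot)$ built from differences of $O(n,q,\cdot)$, but these are the same object; you also spell out the telescoping computation $\#\{j:2L_j\geq 2L,\ q_j\equiv q\}-\#\{j:2L_j\geq 2L+1,\ q_j\equiv q\}$ more explicitly than the paper does. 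One minor remark: your extra exclusion of $t$ for which $A+tB$ has a repeated factor is in fact vacuous here, since for every $t\in\R$ the one-variable polynomial $y\mapsto A(x,y)+tB(x,y)$ has only simple real zeros for small $x\neq 0$ (by Theorem \ref{onevarfacts}), which already rules out repeated branches.
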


\begin{proof}
First, let us point out that the sets $\mathcal{B}_t(2L,q)$
are disjoint.
If $y+\phi(x) \in \mathcal{B}_t(2L,q)\cap \mathcal{B}_t(2\tilde{L},\tilde{q})$
for $(2L,q),(2\tilde{L},\tilde{q}) \in D(P)$
then without loss of generality $2L \leq 2\tilde{L}$
and since
\[
\text{Ord}(\phi(x) - q(x))\geq 2L,\ \text{Ord}(\phi(x) - \tilde{q}(x)) \geq 2L
\]
we must have $\text{Ord}(q(x) -\tilde{q}(x)) \geq 2L$.
If $2L=2\tilde{L}$ then $q=\tilde{q}$ and we are finished.
If $2L < 2\tilde{L}$, then the coefficient $c$ of $x^{2L}$ in $\tilde{q}$ 
belongs to $S(2L,q,P)$ and
\[
\text{Ord}(\phi(x) - (q(x) + cx^{2L}) \geq 2L+1
\]
contrary to $y+\phi(x) \in \mathcal{B}_t(2L,q)$.

Next, we explain why every branch belongs to one of these
sets.
Since every branch of $P$ has a maximal initial real segment,
$S(2L,q,P)$ is equal to 
\[
\{c \in \R: \exists (2\tilde{L},\tilde{q}(x)) \in D(P) \text{ such that } \text{Ord}(q(x) +cx^{2L} - \tilde{q}(x)) \geq 2L+1\}.
\]
Notice that if $(2L, q(x))$ is the maximal real initial segment datum of
the branch $0=y+\phi(x)$ of $P$ then for $c\in\R$
\[
\text{Ord}(\phi(x) - (q(x)+cx^{2L})) \leq 2L
\]
by maximality. 
The set $S(2L,q,P)$ gives us a way to algebraically count
the branches of $P$ with maximal real segment datum $(2L,q(x))$.
Indeed,
\[
M(2L,q,P):= O(2L, q, P) - O(2L-1, q, P) - 
\sum_{c \in S(2L,q, P)} O(2L+1, q+cx^{2L}, P) - O(2L, q+cx^{2L},P)
\]
counts all of the branches of $P$ with maximal initial real segment datum $(2L,q(x))$,
Since every branch of $P$ has maximal real segment associated to an element $D(P)$,
the number $M(2L,q,P)$ is exactly the number of occurrences of 
$(2L,q)$ in the list
\[
(2L_1, q_1),\dots, (2L_M, q_M)
\]
and therefore we can count all branches
\[
\sum_{(2L,q)\in D(P)} M(2L,q,P)  = M.
\]

By Lemma \ref{lem222}, the analogous quantity
\[
\begin{aligned}
M(2L, q, A+tB) := & O(2L, q, A+tB) - O(2L-1, q, A+tB)\\
& - 
\sum_{c \in S(2L, q, P)} O(2L+1, q+cx^{2L}, A+tB) - O(2L, q+cx^{2L},A+tB)
\end{aligned}
\]
counts the branches, say $y+\psi(x)=0$, of $A+tB$ such that 
$\text{Ord}(\psi(x) - q(x)) \geq 2L$ and 
for any $(2\tilde{L},\tilde{q}(x)) \in D(P)$ with $2\tilde{L}>2L$ and
$\text{Ord}(q(x) - \tilde{q}(x)) \geq 2L$ we have
\[
\text{Ord}(\psi(x) - \tilde{q}(x)) \leq 2L;
\]
namely, the branches beginning with $q(x)$ 
that do not extend by even one coefficient to
agree with a longer segment associated to $P$.
Thus,
\[
M(2L,q, A+tB) = \# \mathcal{B}_t(2L,q).
\]

We must have 
\[
M(2L, q , P) = M(2L,q, A+tB)
\]
for all but finitely many $t$ and as we vary over all 
$(2L,q) \in D(P)$.  
Therefore, for all but finitely many $t$
\begin{equation} \label{MAtB}
\sum_{(2L,q)\in D(P)} M(2L,q, A+tB) = M
\end{equation}
and this shows that $\mathcal{B}_t(2L,q)$ partitions the branches
of $A+tB$.
\end{proof}

We now
begin the proof of Theorem \ref{matchbranches}.
It is convenient to get ahead of ourselves a little bit
and use a couple of standard one variable results 
that we incidentally prove in the next section;
namely, Theorem \ref{onevarfacts}
which says that if $p(y) \in \C[y]$ has no zeros in $\overline{\uhp}$
then writing $p=A+iB$, for $t\in \R$ the polynomial 
$A+tB$ has only real simple zeros and 
$y\mapsto A(y)/B(y)$ is strictly increasing
on any interval of $\R$ not containing a zero
of $B(y)$.

\begin{proof}[Proof of Theorem \ref{matchbranches}]
Let us fix $t\in \R$ such that the conclusion
of Proposition \ref{branchprop} holds.
The factorization of $A+tB$ follows
from the partitioning of branches 
stated in Proposition \ref{branchprop}.
What is left to prove is the computation
of the order of contact between two branches of
$A+tB$.

It is worth pointing out
now that $A+tB$ cannot have
any repeated branches (although
it can certainly have repeated 
branch types from $D(P)$).
In fact, for fixed $x>0$ small enough, $y\mapsto P(x,y)$ has no zeros in $\overline{\uhp}$.
(Zeros on $\R^2$ are common zeros of $P$ and $\refl{P}$ and 
there are only finitely many of these.)
Then,  $y\mapsto A(x,y)+tB(x,y)$ 
has only real simple zeros
by the forthcoming Theorem \ref{onevarfacts}.  This 
precludes any repeated branches.

Now, we need to prove that for two branches of
$A+tB$ when written according to the partition in Proposition \ref{branchprop}---namely,
\[
0 = y + \underset{\phi(x)}{\underbrace{q(x) + x^{2L} \psi(x)}} \text{ and } 0= y+\underset{\tilde{\phi}(x)}{\underbrace{\tilde{q}(x) + x^{2\tilde{L}} \tilde{\psi}(x)}}
\]
where $(2L,q), (2\tilde{L},\tilde{q}) \in D(P)$,
with the first branch belonging to $\mathcal{B}_t(2L,q)$ and the second
belonging to $\mathcal{B}_t(2\tilde{L},\tilde{q})$---
that we have
\begin{equation}\label{Ords}
\text{Ord}(q(x) + x^{2L} \psi(x) - (\tilde{q}(x) + x^{2\tilde{L}} \tilde{\psi}(x))) = 
\min\{\text{Ord}(q(x) - \tilde{q}(x)), 2L, 2\tilde{L}\}.
\end{equation}

First, suppose $2L < 2\tilde{L}$.
If $\text{Ord}(q(x) - \tilde{q}(x)) < 2L$
then 
\[
\text{Ord}(q(x) - \tilde{q}(x) + x^{2L}\psi(x) - x^{2\tilde{L}} \tilde{\psi}(x)) = \text{Ord}(q(x)-\tilde{q}(x))
\]
and this agrees with the minimum on the right side of \eqref{Ords}.

If $\text{Ord}(q(x) - \tilde{q}(x)) \geq 2L$,
then $\tilde{q}(x) = q(x) + c x^{2L} + O(x^{2L+1})$
where $c\in S(2L,q,P)$.
Since $y+\phi(x) \in \mathcal{B}_t(2L,q)$ 
we cannot 
have $\psi(0)  = c$.
In this case we have that the left side of \eqref{Ords} is $2L$
and so is the right side.

In the case where $2L = 2\tilde{L}$ and $q(x) \ne \tilde{q}(x)$
we must have  
$\text{Ord}(q(x) - \tilde{q}(x)) < 2L$ which works
just like the first case above.

Finally, suppose $(2L,q) = (2\tilde{L},\tilde{q})$.
For $x\ne 0$ small enough the branches do not intersect and so for $x>0$ we have 
without loss of generality
\[
q(x) + x^{2L} \psi(x) > \tilde{q}(x) + x^{2\tilde{L}} \tilde{\psi}(x)
\]
(since otherwise the opposite inequality holds).
Notice that 
for each $x$, as $y$ increases from
$y_1 = -(q(x) + x^{2L} \psi(x))$ 
to
$y_2 = -(q(x) + x^{2L} \tilde{\psi}(x))$ 
the function $y\mapsto A(x,y)/B(x,y)$
runs from $-t$, increases to $\infty$, goes
from $-\infty$ to $-t$;
thus it attains every value except $-t$.  
This follows from Theorem \ref{onevarfacts}.
In particular, for $s \ne t$, $A+sB$ has a branch,
say  $y + \phi(x;s)$, in between $y_1$ and $y_2$.
For fixed $s$ we may have to decrease $x$ so that
this analytic branch exists.
Then,
\[
q(x) + x^{2L} \psi(x) > 
\phi(x;s) > q(x) + x^{2L} \tilde{\psi}(x)
\]
and the initial expansion of $\phi(x;s)$
must begin with $q(x)$.
We cannot have $\psi(0) = \tilde{\psi}(0)$
for then every branch in between would begin 
\[
q(x) + c x^{2L}
\]
for $c =  \psi(0)$.
This would then generically
be an initial segment of
$A+sB$ in which case it would be
an initial segment of $P$.
Indeed,
\[
  O(2L+2, q+cx^{2L},P) - O(2L+1, q+cx^{2L}, P)
\]
counts the number of branches of $P$ with $q+cx^{2L}$ as an
initial segment of order at least $2L+1$.
This generically equals the corresponding quantity for $A+sB$.
However, we deliberately chose $t$
so that its segments built from 
the data $(2L,q)$ do not extend to 
longer segments.  Therefore $\psi(0)\ne \tilde{\psi}(0)$ and then
the left and right sides of \eqref{Ords} equal $2L$.
This completes the proof.
\end{proof}

A useful addendum to the above proof is the following.
Consider two values 
of $t$, say $t=t_1, t_2$, for which Proposition \ref{branchprop} holds.
For a fixed datum $(2L,q) \in D(P)$,
we just proved above that 
$A+t_1 B$ and $A+t_2B$ have 
$M(2L,q,P)$ branches
that begin with $q(x)$ but that
do not extend by even one coefficient
to agree with any other branches of $P$.
Let us label the branches
\begin{equation} \label{labbranch}
q(x) + x^{2L}\psi_j(x;t_1),\ q(x) + x^{2L} \psi_j(x;t_2)
\end{equation}
for $j=1,\dots, M(2L,q,P)$,
where $M(2L,q,P)$, given in the proof of Proposition \ref{branchprop},
is the number of occurrences of $(2L,q)$ in the list $(2L_1,q_1),\dots, (2L_M,q_M)$.
We proved above that the values
$\{\psi_j(0;t_1): j=1,\dots, M(2L,q,P)\}$
are all distinct and likewise for $t_2$.
However we can also argue that
\[
\{\psi_j(0;t_1): j=1,\dots, M(2L,q,P)\} \cap 
\{\psi_j(0;t_2): j=1,\dots, M(2L,q,P)\} = \varnothing.
\]
Indeed if we had $c = \psi_j(0;t_1) = \psi_k(0;t_2)$
then there would be infinitely many
values of $s$ so that $A+sB$ has
a branch beginning $q(x) + c x^{2L}$.
This would imply that $P$
has a branch beginning $q(x)+cx^{2L}$
which again it does not
by choice of $t_1,t_2$.  

\begin{prop} \label{distval}
Assume the setup and conclusion of Theorem \ref{matchbranches}.
For any two distinct values of $t$, say $t=t_1,t_2$ 
such that Proposition \ref{branchprop} holds, consider the branches
of $A+t_1 B$ and $A+t_2 B$ with initial segment 
datum $(2L, q)$.  We write out these branches
as in \eqref{labbranch}.  
Then, the sets 
\[
S_1 = \{\psi_j(0;t_1): j=1,\dots, M(2L,q,P)\}, \quad
S_2 = \{\psi_j(0;t_2): j=1,\dots, M(2L,q,P)\}
\]
both contain $M(2L,q,P)$ real numbers 
and $S_1 \cap S_2 = \varnothing$. 
\end{prop}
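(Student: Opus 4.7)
My plan is to formalize the ``useful addendum'' remarks immediately preceding the statement, with Theorem \ref{matchbranches} and Lemma \ref{lem222} as the main tools.

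For the cardinality claim, I would show that $j \mapsto \psi_j(0; t_i)$ is injective on the $M(2L,q,P)$ branches with datum $(2L, q)$. If two distinct branches in $\mathcal{B}_{t_i}(2L, q)$ are written $q(x) + x^{2L}\psi_j(x;t_i)$ and $q(x) + x^{2L}\psi_k(x;t_i)$, the non-collision estimate \eqref{Oijeq} of Theorem \ref{matchbranches}, specialized to equal segment data, gives
\[
\text{Ord}\bigl(x^{2L}(\psi_j(x;t_i) - \psi_k(x;t_i))\bigr) = 2L,
\]
so $\psi_j(0;t_i) \ne \psi_k(0;t_i)$ and $|S_i| = M(2L,q,P)$.

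For the disjointness $S_1 \cap S_2 = \varnothing$ I would argue by contradiction. Suppose $c = \psi_j(0;t_1) = \psi_k(0;t_2)$. Then $A + t_1 B$ and $A + t_2 B$ each possess a branch of the form $y + q(x) + cx^{2L} + O(x^{2L+1})$, which translates via the factorization \eqref{AtBfactor} into
\[
O(2L+1, q + cx^{2L}, A + t_i B) - O(2L, q + cx^{2L}, A + t_i B) \geq 1, \qquad i = 1, 2.
\]
Because $t_1 \neq t_2$, $A$ and $B$ are invertible linear combinations of $A + t_1 B$ and $A + t_2 B$, so this vanishing information along the curve $y = -q(x) - cx^{2L}$ descends to $A$ and $B$ individually and then propagates to every $s \in \R$. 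Inspecting the factorization \eqref{AtBfactor} of $A + sB$ branch by branch, this degree of vanishing forces $A + sB$ itself to have a branch with initial segment $q(x) + cx^{2L}$ of order $\geq 2L+1$ for every $s$. By Lemma \ref{lem222}, the count on the left above matches the analogous count for $P$ off at most two values of $s$, so $P$ itself has a branch whose initial real segment extends $q(x)$ to $q(x) + cx^{2L}$; equivalently, $c \in S(2L, q, P)$. This contradicts the defining property of $\mathcal{B}_{t_1}(2L, q)$, since the branch labeled $j$ would then extend along the element $c$ of $S(2L, q, P)$.

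The step I expect to be most delicate is the propagation in the middle of the second paragraph: turning the identity $(A+sB)(x, -q(x) - cx^{2L}) = O(x^{2L+1})$ into the geometric assertion that $A + sB$ has an honest analytic branch with the desired initial segment. One must account for the contribution to the order of vanishing from every factor in the local factorization of $A + sB$, ruling out the possibility that the required order is spread across many short factors none of which is individually long enough to count as an initial segment of order $\geq 2L+1$. The enumeration of initial-segment types already developed in the proof of Proposition \ref{branchprop} supplies the bookkeeping needed to carry this out.
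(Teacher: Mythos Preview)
Your cardinality argument via \eqref{Oijeq} is correct and matches the paper. Your strategy for disjointness---assume $c=\psi_j(0;t_1)=\psi_k(0;t_2)$, deduce that $P$ has a branch with initial segment $q(x)+cx^{2L}$, so $c\in S(2L,q,P)$, contradicting membership in $\mathcal{B}_{t_1}(2L,q)$---is also the paper's.

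The gap is in the propagation step, and the ``delicate step'' you flag is not the real issue. By Lemma \ref{lem222} the difference $O(2L+1,q+cx^{2L},\cdot)-O(2L,q+cx^{2L},\cdot)$ \emph{is} the branch count, so once that difference is $\ge 1$ no branch-by-branch bookkeeping is needed. The actual problem is earlier: writing $A+sB$ as a linear combination of $A+t_1B$ and $A+t_2B$ gives only $O(n,q+cx^{2L},A+sB)\ge\min_i O(n,q+cx^{2L},A+t_iB)$ for each $n$, and lower bounds on two terms separately do not control their difference. The claim that the difference is $\ge 1$ for \emph{every} $s$ can genuinely fail: with $A=y^2-x^4$, $B=xy$, $q=0$, $2L=2$, $c=0$, one checks $O(3,0,A+sB)-O(2,0,A+sB)=1$ for $s\ne 0$ but $=0$ at $s=0$.

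The fix bypasses the detour through all $s$. The sentence preceding Lemma \ref{lem222} gives $O(n,q+cx^{2L},A+tB)=O(n,q+cx^{2L},P)$ for all $t$ with at most one exception. Let $t^*$ be the exception (if any) for $n=2L+1$; since $t_1\ne t_2$, one of them, say $t_1$, is not $t^*$. Then
\[
O(2L+1,q+cx^{2L},P)=O(2L+1,q+cx^{2L},A+t_1B)\ge O(2L,q+cx^{2L},A+t_1B)+1\ge O(2L,q+cx^{2L},P)+1,
\]
and Lemma \ref{lem222} gives $c\in S(2L,q,P)$ directly.

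For comparison, the paper argues analytically rather than via the $O$-counts: for small $x>0$ the monotonicity of $y\mapsto A(x,y)/B(x,y)$ from Theorem \ref{onevarfacts} produces, between the $t_1$-branch and the $t_2$-branch, a root of $A+sB$ for infinitely many $s$, and any such sandwiched root inherits the initial segment $q(x)+cx^{2L}$. Since infinitely many $s$ exceed the at-most-two exceptional values in Lemma \ref{lem222}, the segment transfers to $P$. Your corrected algebraic route avoids the interlacing entirely.
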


This proposition will be important later when we
wish to show the order of vanishing 
of a polynomial $Q(x,y)$ along a branch of $A+tB$,
say $\text{Ord}(Q(x,a_j(x;t)))$, generically in $t$ 
only depends on the
branch type---i.e. which datum $(2L,q)$ the branch
is associated to.   

\begin{remark}\label{Andersonremark}
We mentioned earlier that Theorem \ref{matchbranches}
is similar to Lemma 5.8 of \cite{Anderson}.
This lemma states---in the language of this paper and of Theorem \ref{matchbranches}---that
there are at most finitely many 
$t$ such that for some $j$, $q_j(x) + x^{2L_j} \psi_{j}(0;t)$
is an initial segment of a branch of $P$ of order $2L_j+1$.

Recall that $\mathcal{B}_t(2L_j, q_j)$ is
the set of branches of $A+tB$ that begin with $q_j(x)$
but do not extend further along some datum $(2L,q) \in D(P)$.
Proposition \ref{branchprop} exactly says that
for all but finitely many $t$, the set of branches of $A+tB$
all have this property.  This implies Lemma 5.8 of \cite{Anderson}.

Thus,  Lemma 5.8 of \cite{Anderson} is very close
to Theorem \ref{matchbranches} and Proposition \ref{branchprop}
although it does not appear to directly give the further separation of
branches for distinct $t$ values given in Proposition \ref{distval} 
(which is not to suggest it needed or claimed to).  $\diamond$
\end{remark}

\section{One variable facts and formulas} \label{onevarsec}
The goal of this section is to review the following known 
facts and formulas in one variable.  We will apply them in the
next section to two variables where we will view the additional
variable as a parameter. 

\begin{theorem} \label{onevarfacts}
Assume $p(y) \in \C[y]$, $\deg p = M \geq 1$, and $p$ has no zeros in $\overline{\uhp}$.
Let $A = (p+\refl{p})/2$ and $B = (p-\refl{p})/(2i)$.  
Then, for each $t \in \R$, $A+tB$ has only simple real zeros,
and $B$ has only simple real zeros.
The zeros of $A+tB$ and $B$ interlace in the sense that between any
two zeros of $A+tB$ there is a zero of $B$ and vice versa.

For $y \in \R$, $B(y)A'(y) - A(y)B'(y)>0$ so that in particular
\[
\frac{A(y)}{B(y)}
\]
is strictly increasing  
on any interval in $\R$ not containing a zero of $B$.
In addition, $A(y)/B(y)$ has positive imaginary part for $y \in \uhp$.
\end{theorem}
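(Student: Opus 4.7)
\textbf{Proof plan for Theorem \ref{onevarfacts}.}
The plan is to realize $F(y):=A(y)/B(y)$ as a Nevanlinna function (mapping $\uhp$ to $\uhp$) and to read off all five claims from the standard qualitative structure of such rational functions. The real coefficient decomposition $p=A+iB$ together with $\refl{p}=A-iB$ will be used throughout, as will the factorization $p(y)=c\prod_{j=1}^{M}(y-\alpha_j)$ with $\Im\alpha_j<0$.

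First, I would record the Wronskian identity. Differentiating $\log p(y)=\sum_j\log(y-\alpha_j)$ gives
\[
\frac{p'(y)}{p(y)}=\sum_j\frac{1}{y-\alpha_j},\qquad \Im\frac{1}{y-\alpha_j}=\frac{-\Im\alpha_j}{|y-\alpha_j|^2}<0 \text{ for } y\in\R,
\]
so $\Im(p'/p)<0$ on $\R$. On the other hand, clearing the denominator by multiplying numerator and denominator of $(A'+iB')/(A+iB)$ by $A-iB$ yields $\Im(p'/p)=(AB'-A'B)/(A^2+B^2)$. Since $p$ has no real zeros (as all $\alpha_j$ lie in the lower half plane), $A^2+B^2=|p|^2>0$ on $\R$, so I conclude $BA'-AB'>0$ on $\R$. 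This already gives the monotonicity statement $(A/B)'=(A'B-AB')/B^2>0$ on any interval avoiding the zeros of $B$.

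Next, the main step: proving $A/B$ sends $\uhp$ to $\uhp$. Writing $\zeta:=\refl{p}/p$, a direct expansion gives
\[
|y-\alpha_j|^2-|y-\bar\alpha_j|^2=-4(\Im\alpha_j)(\Im y)>0 \text{ for } y\in\uhp,
\]
hence $|\zeta(y)|<1$ on $\uhp$. From $A=\tfrac12(p+\refl{p})$ and $B=\tfrac{1}{2i}(p-\refl{p})$ I get
\[
\frac{A}{B}=i\,\frac{p+\refl{p}}{p-\refl{p}}=i\,\frac{1+\zeta}{1-\zeta}.
\]
The Möbius map $w\mapsto(1+w)/(1-w)$ sends $\D$ into the open right half plane, and multiplication by $i$ then sends the right half plane to $\uhp$. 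Hence $\Im(A/B)>0$ on $\uhp$. This is the crux of the argument; everything else will be a consequence.

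With the Nevanlinna property in hand, I would finish as follows. A common zero of $A$ and $B$ in $\C$ would be a common zero of $p$ and $\refl{p}$, impossible since their zeros live in disjoint half planes; so $A$ and $B$, and also $A+tB$ and $B$, have no common zeros. If $B$ had a zero $y_0\in\C$ of multiplicity $m\geq 1$ with $y_0\notin\R$ or with $m\geq 2$, then $A/B\sim c(y-y_0)^{-m}$ near $y_0$, and on any small punctured neighborhood of $y_0$ intersected with $\uhp$ the imaginary part of $A/B$ would take both signs, contradicting $\Im(A/B)>0$ on $\uhp$. Thus $B$ has only simple real zeros. For each $t\in\R$ the shifted function $A/B+t$ again maps $\uhp$ to $\uhp$, so $-1/(A/B+t)=-B/(A+tB)$ does as well; since $A+tB$ and $B$ share no zeros, its poles are exactly the zeros of $A+tB$, and repeating the same local argument forces those zeros to be real and simple. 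Finally, a direct analysis at each simple real pole $y_0$ of $A/B$ gives residue $A(y_0)/B'(y_0)$, and the identity $BA'-AB'>0$ evaluated at $y_0$ yields $-A(y_0)B'(y_0)>0$, so the residue is negative. Hence on each open interval between consecutive zeros of $B$, the strictly increasing function $A/B$ sweeps from $-\infty$ to $+\infty$, producing exactly one zero of $A+tB$ in each such interval; this is the interlacing statement. The upper half plane positivity of $A/B$ was already established in the main step, completing the proof.
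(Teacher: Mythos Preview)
Your argument is correct and complete, but it follows a genuinely different path from the paper's proof. The paper sets up the polynomial de Branges space $H=\{Q:\deg Q<M\}$ with inner product $\int_{\R} Q_1\bar Q_2\,\frac{dy}{\pi|p|^2}$, derives the closed formula
\[
K(y,\eta)=\frac{p(y)\overline{p(\eta)}-\bar p(y)\overline{\bar p(\eta)}}{-2i(y-\bar\eta)}
=\frac{(A(y)+tB(y))\overline{B(\eta)}-B(y)\overline{(A(\eta)+tB(\eta))}}{y-\bar\eta}
\]
for the reproducing kernel, and then reads everything off from the single positivity statement $K(y,y)>0$: for $y\notin\R$ this gives $\Im\big((A+tB)\bar B\big)/\Im y>0$, hence $\Im(A/B)>0$ on $\uhp$; for $y\in\R$ it gives $(A'+tB')B-(A+tB)B'>0$, which is your Wronskian inequality and immediately shows the zeros of $A+tB$ and of $B$ are simple, disjoint, and real, with $A/B$ increasing and hence interlacing. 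So the paper gets both the Nevanlinna property and the Wronskian from one source.

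You instead prove the two pieces separately and more elementarily: the Wronskian via $\Im(p'/p)=\sum_j \Im\alpha_j/|y-\alpha_j|^2<0$ on $\R$, and the map $\uhp\to\uhp$ via the Blaschke/Cayley observation $|\refl{p}/p|<1$ on $\uhp$ and $A/B=i(1+\zeta)/(1-\zeta)$. You then deduce simplicity and reality of zeros from the local pole structure of a Nevanlinna function and finish interlacing with the negative-residue argument. This is entirely self-contained and avoids the Hilbert space setup; the paper's route has the advantage of simultaneously producing the kernel formula and orthogonal basis needed for Theorem~\ref{parseval}. One small slip: your displayed formula $\Im\frac{1}{y-\alpha_j}=\frac{-\Im\alpha_j}{|y-\alpha_j|^2}$ has the wrong sign (it should be $\frac{\Im\alpha_j}{|y-\alpha_j|^2}$, which is indeed negative since $\Im\alpha_j<0$); the conclusion you draw is correct.
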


The next theorem is our key integral formula and associated 
decomposition formula.  It is convenient to assume $p$ is monic.

\begin{theorem} \label{parseval}
Assume the setup and conclusion of Theorem \ref{onevarfacts}.
Also, assume that $p(y)$ is monic.
Let $a_1(t),\dots, a_M(t)$ denote the $M$
necessarily real roots of $A+tB$.
For any $Q \in \C[y]$ with $\deg Q< \deg p$
\[
\int_{\R} \left|\frac{Q}{p}\right|^2 \frac{dy}{\pi} = \sum_{j=1}^{M} \frac{|Q(a_j(t))|^2}{B(a_j(t))(A'(a_j(t)) + t B'(a_j(t)))} 
\]
and
\[
Q(y) =
\sum_{j=1}^{M} \frac{Q(a_j(t))}{A'(a_j(t)) + t B'(a_j(t))} \frac{A(y)+tB(y)}{y- a_j(t)}. 
\]
\end{theorem}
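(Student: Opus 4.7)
The two parts of the theorem are essentially independent, and I would handle them in that order.

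For the decomposition formula (the second identity), I would invoke Lagrange interpolation directly. Since $p(y)$ is monic of degree $M$, so are $\refl{p}$, $A=(p+\refl{p})/2$, and $A+tB$ (the $y^M$ coefficient of $B$ vanishes, so adding $tB$ does not spoil monicity). Theorem \ref{onevarfacts} supplies the $M$ distinct real zeros $a_1(t),\dots,a_M(t)$ of $A+tB$, so for any $Q\in\C[y]$ with $\deg Q<M$, interpolation at these $M$ points gives
\[
Q(y) \;=\; \sum_{j=1}^M Q(a_j(t))\prod_{i\ne j}\frac{y-a_i(t)}{a_j(t)-a_i(t)},
\]
and the stated form follows from $(A+tB)(y)/(y-a_j(t)) = \prod_{i\ne j}(y-a_i(t))$ together with $(A+tB)'(a_j(t)) = \prod_{i\ne j}(a_j(t)-a_i(t))$.

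For the quadrature formula (the first identity), my plan is residue calculus based on the algebraic identity
\[
\frac{A(y)+tB(y)}{p(y)\refl{p}(y)} \;=\; \frac{1}{2}\!\left(\frac{1-it}{\refl{p}(y)} + \frac{1+it}{p(y)}\right),
\]
which comes from $A+tB = \tfrac12\bigl((1-it)p + (1+it)\refl{p}\bigr)$. Multiplying by $Q(y)\refl{Q}(y)/(A(y)+tB(y))$ decomposes the integrand as
\[
\frac{|Q(y)|^2}{|p(y)|^2} \;=\; \frac{1-it}{2}\,G_-(y) + \frac{1+it}{2}\,G_+(y),\qquad G_\pm(y):=\frac{Q(y)\refl{Q}(y)}{(A(y)+tB(y))\,p^{\pm}(y)},
\]
where $p^+:=p$ and $p^-:=\refl{p}$. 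Each $G_\pm$ is a proper rational function decaying like $|y|^{-2}$ at infinity (numerator degree $\le 2M-2$, denominator degree $2M$), whose only poles are simple real poles at the $a_j(t)$ together with remaining poles in $-\uhp$ for $G_+$ (the zeros of $p$) and in $\uhp$ for $G_-$ (the zeros of $\refl{p}$).

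I now close the contour in $\uhp$ for $G_+$ and in $-\uhp$ for $G_-$, indenting the real axis with small semicircles around each $a_j(t)$ that push into the chosen half-plane. Since neither contour then encloses any nonreal pole and the large semicircles contribute nothing in the limit, one obtains $\mathrm{PV}\!\int_\R G_+\,dy = i\pi\sum_j \mathrm{Res}_{a_j}G_+$ and $\mathrm{PV}\!\int_\R G_-\,dy = -i\pi\sum_j \mathrm{Res}_{a_j}G_-$. Substituting $\mathrm{Res}_{a_j}G_\pm = |Q(a_j)|^2/((A'(a_j)+tB'(a_j))\,p^{\pm}(a_j))$ together with the evaluations $p(a_j)=(i-t)B(a_j)$ and $\refl{p}(a_j)= -(i+t)B(a_j)$ (immediate from $A(a_j)=-tB(a_j)$) into the decomposition, the complex coefficients collapse via the identity $\tfrac{i\pi}{2}\bigl(\tfrac{1-it}{i+t}+\tfrac{1+it}{i-t}\bigr)=\pi$, giving the stated formula after dividing by $\pi$. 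The main subtlety is that neither $G_+$ nor $G_-$ is individually Lebesgue integrable on $\R$—they each have simple poles on the real axis—but their singular parts cancel in the combination $\tfrac{1-it}{2}G_-+\tfrac{1+it}{2}G_+$, so the principal values exist and the indentation argument is legitimate; the careful point is matching the direction of each semicircle with the half-plane in which its contour will be closed.
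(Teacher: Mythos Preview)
Your proof is correct, but it takes a genuinely different route from the paper's. The paper builds the polynomial de Branges space: it equips $\{Q:\deg Q<M\}$ with the inner product $\int_{\R} Q_1\bar Q_2\,\frac{dy}{\pi|p|^2}$, proves the closed-form reproducing kernel $K(y,\eta)=\dfrac{p(y)\overline{p(\eta)}-\refl{p}(y)\overline{\refl{p}(\eta)}}{-2i(y-\bar\eta)}$ by a clean residue computation in the open half-planes (no real poles, hence no principal values), and then observes from the kernel formula that $\{K_{a_j(t)}\}_{j=1}^M$ is an orthogonal basis. Both identities then fall out of the Parseval relation and the orthogonal expansion $Q=\sum_j (Q,k_j)k_j$.

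Your argument is more hands-on and avoids Hilbert space entirely: Lagrange interpolation dispatches the decomposition formula immediately, and the quadrature formula comes from the algebraic splitting $\frac{A+tB}{p\refl{p}}=\frac{1-it}{2\refl{p}}+\frac{1+it}{2p}$ followed by indented-contour residue calculus. The trade-off is that you must manage the principal-value bookkeeping at the real poles $a_j(t)$ (which the paper never touches), and the positivity of each summand $|Q(a_j)|^2/\bigl(B(a_j)(A'+tB')(a_j)\bigr)$ is something you would have to read off separately from Theorem~\ref{onevarfacts} rather than seeing it as an automatic $\|K_{a_j}\|^2>0$. The paper's orthogonal-basis viewpoint is also what gets parametrized in $x$ later to drive the two-variable theory, so while your approach is a perfectly good standalone proof of this theorem, the de Branges framework pays dividends downstream.
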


Note that from Theorem \ref{onevarfacts}, $B(a_j(t))(A'(a_j(t))+t B'(a_j(t))) > 0$
so that everything above makes sense.  Also, $\frac{A(y)+tB(y)}{y- a_j(t)}$ is a
polynomial since $a_j(t)$ is a zero of $A+tB$.

The proofs of Theorems \ref{onevarfacts} and \ref{parseval}
 use what are called polynomial de Branges spaces.
The theory is relatively simple so it 
makes sense to develop from scratch
rather than provide opaque references to literature
on more general de Branges spaces.  In any case, 
everything stated above can be found in \cite{debranges} and many
other references.

Let $p(y) \in \C[y]$ have no zeros in $\overline{\uhp}$ and $\deg p = M$.
Consider the $M$-dimensional Hilbert space $H = \{ Q \in \C[y]: \deg Q < M\}$ 
equipped with inner product
\[
(Q_1,Q_2) = \int_{\R} Q_1 \bar{Q}_2  \frac{dy}{\pi|p|^2}.
\]
Since $H$ is finite dimensional and polynomials are determined by
their values on $\R$, point evaluation at any $\eta \in \C$ is a bounded linear
functional on $H$.  Therefore, there exists a unique $K_{\eta} \in H$ such that 
\[
(Q, K_{\eta}) = Q(\eta).
\]
On the other hand, for any orthonormal basis $\{Q_j\}_{j=1}^{M}$ of $H$
one can directly check that 
\[
K_{\eta}(y) = \sum_{j=1}^{M} Q_j(y) \overline{Q_j(\eta)}.
\]
Thus, the function $K(y,\eta) = K_{\eta}(y)$, called the reproducing kernel,
is a polynomial in $y, \bar{\eta}$.  Also, note that $K(y,y)> 0$ for $M\geq 1$ since
we could choose a constant function to be in our orthonormal basis.  

We can establish an explicit formula for $K$.

\begin{prop}
Assume the setup of Theorem \ref{onevarfacts}.
Then,
\[
K(y,\eta) = \frac{p(y) \overline{p(\eta)} - \bar{p}(y) \overline{\bar{p}(\eta)}}{-2 i(y-\bar{\eta})}.
\]
\end{prop}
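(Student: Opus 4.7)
The plan is to set $F(y,\eta)$ equal to the stated right-hand side and verify two things: (i) $F(\cdot,\eta) \in H$ for each fixed $\eta$, and (ii) $F$ satisfies the reproducing property $(Q, F(\cdot,\eta)) = Q(\eta)$ for every $Q \in H$. By uniqueness of the reproducing kernel this forces $K = F$.

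For (i), I would first check that the numerator vanishes when $y = \bar{\eta}$, so that the $y-\bar{\eta}$ in the denominator cancels and $F$ is genuinely a polynomial in $y$. This uses the identities $\bar{p}(\bar{\eta}) = \overline{p(\eta)}$ and $\overline{\bar{p}(\eta)} = p(\bar{\eta})$ (valid for any polynomial), which make the numerator at $y = \bar{\eta}$ equal to $p(\bar{\eta})\overline{p(\eta)} - \overline{p(\eta)}p(\bar{\eta}) = 0$. Since the $y$-degree of the numerator is $M$, after the cancellation $\deg_y F \leq M-1$, so $F(\cdot,\eta) \in H$.

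For (ii), fix $\eta \in \uhp$ and $Q \in H$. Since $y \in \R$, we have $\bar{p}(y) = \overline{p(y)}$ and $|p(y)|^2 = p(y)\bar{p}(y)$, and a direct computation gives
\[
\overline{F(y,\eta)} = \frac{\bar{p}(y)p(\eta) - p(y)\bar{p}(\eta)}{2i(y-\eta)}.
\]
Substituting into the inner product and splitting into two pieces yields
\[
(Q, F(\cdot,\eta)) = \frac{p(\eta)}{2\pi i}\int_{\R}\frac{Q(y)}{(y-\eta)p(y)}\,dy \;-\; \frac{\bar{p}(\eta)}{2\pi i}\int_{\R}\frac{Q(y)}{(y-\eta)\bar{p}(y)}\,dy.
\]
In the first integral, $p$ has no zeros in $\overline{\uhp}$, so the integrand is meromorphic in $\uhp$ with a single simple pole at $y=\eta$; the degree condition $\deg Q < M$ makes it $O(|y|^{-2})$ at infinity, so closing the contour in $\uhp$ picks up precisely $Q(\eta)/p(\eta)$ by residues. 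In the second integral, $\bar{p}$ has all its zeros in $\overline{\uhp^{-}}$ (the closed lower half-plane), hence $1/\bar{p}$ is analytic in $\uhp$; together with $\eta \in \uhp$ this means the integrand is analytic in $\uhp^{-}$ and again $O(|y|^{-2})$, so closing in the lower half-plane yields $0$. Combining, $(Q,F(\cdot,\eta)) = Q(\eta)$.

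Finally, since $K(y,\eta)$ and $F(y,\eta)$ are both polynomials in $y$ and $\bar{\eta}$ and they agree for all $\eta$ in the open set $\uhp$, they coincide identically. The main obstacles are purely bookkeeping: getting the conjugates and signs right when computing $\overline{F(y,\eta)}$ for real $y$, and verifying the $O(|y|^{-2})$ decay that justifies closing each contour. Everything else is immediate from $p$ having no zeros in $\overline{\uhp}$ and $\bar{p}$ having none in the open lower half-plane.
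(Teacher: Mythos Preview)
Your approach is essentially identical to the paper's: define the candidate kernel, check it lies in $H$ via the vanishing of the numerator at $y=\bar\eta$, verify the reproducing property for $\eta\in\uhp$ by splitting the inner product into two contour integrals, and extend to all $\eta$ by polynomiality.

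There is one slip in the second residue computation. You write that ``$\bar{p}$ has all its zeros in $\overline{\uhp^{-}}$, hence $1/\bar{p}$ is analytic in $\uhp$''; this is backwards. Since $p$ has no zeros in $\overline{\uhp}$, the zeros of $p$ lie in the open lower half-plane, and the zeros of $\bar{p}$ (being their complex conjugates) lie in the open \emph{upper} half-plane $\uhp$. Thus $1/\bar{p}$ is analytic in the closed \emph{lower} half-plane, and combined with $\eta\in\uhp$ (so $y-\eta\ne 0$ there) the integrand is analytic in $\uhp^{-}$; closing the contour in the lower half-plane then gives $0$, exactly as you conclude. So the conclusion and the contour choice are right, only the stated location of the zeros of $\bar{p}$ needs to be flipped.
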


\begin{proof}
Set $L(y,\eta) =L_\eta(y)= \frac{p(y) \overline{p(\eta)} - \bar{p}(y) \overline{\bar{p}(\eta)}}{-2 i(y-\bar{\eta})}$
 which is evidently a polynomial in $y, \bar{\eta}$ of degree less than $p$ since
the numerator vanishes for $y=\bar{\eta}$.

Observe that 
\[
(Q, L_{\eta}) =  p(\eta) \underset{I_1}{\underbrace{\int_{\R} \frac{Q}{p} \frac{dy}{2\pi i (y-\eta)}}}  
- \bar{p}(\eta) \underset{I_2}{\underbrace{\int_{\R} \frac{Q}{\bar{p}} \frac{dy}{2\pi i(y-\eta)}}}.
\]
To evaluate $I_1$ we apply
the residue theorem to a large half-circle in $\uhp$ and send the radius to $\infty$.
Using the fact that $p$ has no zeros in $\overline{\uhp}$ one obtains $I_1= Q(\eta)/p(\eta)$.
To evaluate $I_2$ we apply the residue theorem to a large half-circle in the lower half plane
$-\uhp$ and now obtain $I_2=0$ because $\eta \notin -\uhp$.
Thus, $(Q,L_{\eta}) = Q(\eta)$, meaning $L_{\eta}$ has the unique
property that $K_{\eta}$ has.  Therefore, $K(y,\eta) = L(y,\eta)$ 
for $(y,\eta) \in \C\times \uhp$.  The formula continues to hold for
all $(y,\eta) \in \C^2$ since both expressions are polynomials.
\end{proof}

\begin{proof}[Proof of Theorem \ref{onevarfacts}]
First we rewrite $K$ in terms of $A+tB$ and $B$:
\begin{equation} \label{Ky}
K(y,\eta) = \frac{A(y) \overline{B(\eta)} - B(y)\overline{A(\eta)}}{y-\bar{\eta}} = 
\frac{(A(y)+tB(y)) \overline{B(\eta)} - B(y)(\overline{A(\eta)+t B(\eta)})}{y-\bar{\eta}}
\end{equation}
For $y \notin \R$ 
\begin{equation} \label{Kyynr}
K(y,y) 
= \frac{\Im((A(y)+t B(y)) \overline{B(y)})}{\Im(y)} > 0
\end{equation}
which implies $(A(y)+tB(y))/B(y)$ has positive imaginary part in $\uhp$
and negative imaginary part in $-\uhp$.  
In particular, $A+tB, B$ only have real roots.
For $y \in \R$,
\begin{equation} \label{Kyyr}
K(y,y) 
= (A'(y)+tB'(y))B(y) - B'(y)(A(y)+tB(y))> 0
\end{equation}
which implies $(A(y)+tB(y))/B(y)$ is locally strictly increasing in $y \in \R$ 
(away from zeros of $B$).
At a zero $a$ of $A+tB$ we have $(A'(a)+tB'(a))B(a) > 0$
so that all zeros of $A+tB$ are simple and do not coincide with
zeros of $B$.  Similarly, zeros of $B$ are simple.
Since $A(y)/B(y)$ is locally strictly increasing,
two consecutive zeros of $A+tB$ must have a zero
of $B$ between them.  This proves Theorem \ref{onevarfacts}.
\end{proof}
 
 \begin{proof}[Proof of Theorem \ref{parseval}]
 We pick up where the previous proof left off but now we also
 assume $p$ is monic.
Since $p$ is monic we have $\deg(A+tB) = \deg p = M$.
If the (necessarily real) $M$ roots of $A+tB$ are written $a_j(t)$ for $j=1,\dots, M$, in some order, then 
\[
(K_{a_j(t)})_{j=1}^{M}
\]
forms an orthogonal basis for $H$.
Orthogonality follows from 
\[
  (K_{a_j(t)}, K_{a_i(t)}) = K(a_i(t), a_j(t)) = 0
\]
via \eqref{Ky} for $i\ne j$.

Note that 
\[
K_{a_j(t)}(y) = B(a_j(t)) \frac{A(y)+tB(y)}{y- a_j(t)}
\]
and
\[
K(a_j(t), a_j(t)) = B(a_j(t))(A'(a_j(t)) + t B'(a_j(t))) > 0.
\]
In particular, this shows
\begin{equation} \label{Icalc}
\int_{\R} \left|\frac{A(y)+tB(y)}{y-a_j(t)} \right|^2 \frac{dy}{\pi|p(y)|^2} = \frac{A'(a_j(t))+t B'(a_j(t))}{B(a_j(t))}.
\end{equation}
Then, for each $t\in \R$
\[
k_j(y):= \frac{K_{a_j(t)}(y)}{\sqrt{K(a_j(t),a_j(t))}} = \sqrt{\frac{B(a_j(t))}{A'(a_j(t)) + t B'(a_j(t))} }\frac{A(y)+tB(y)}{y- a_j(t)}
\]
is an orthonormal basis for $H$ (running over $j=1,\dots M$). 
Theorem \ref{parseval} then follows via the Parseval identities
\[
(Q,Q) = \sum_{j=1}^{M} |(Q,k_j)|^2 \text{ and } Q(y) = \sum_{j=1}^{M} (Q, k_j) k_j.
\]
\end{proof}

\subsection{$L^{\p}$-theory in one variable}

We next attempt to get an $L^\p-\ell^\p$ version of the above results.
This seems to be relatively uncharted territory
for polynomial de Branges spaces
even though a variety of results are known
for one type of prototypical de Branges space, namely the Paley-Wiener 
spaces (see for instance \cite{LS}).

If $\deg Q < \deg p$, then $|Q(y)/p(y)| \leq \frac{const}{|y|+1}$  
so that $Q/p \in L^\p(\R)$ 
for $1<\p\leq\infty$.  
We would like to obtain an equivalence of norms
\[
\|Q/p\|_{L^{\p}} \approx (\sum_{j=1}^{M} |Q(a_j(t))|^{\p} w_j)^{1/\p}
\]
with some appropriate weights $w_j$.  To see what the weights should be
we test with $Q = \frac{A+tB}{y-a_j(t)}$ and get
\[
\int_{\R} \left|\frac{A(y)+tB(y)}{y-a_j(t)}\right|^\p \frac{dy}{\pi |p|^{\p}} \approx |A'(a_j(t)) + t B'(a_j(t))|^\p w_j. \]

\begin{restatable}{definition}{Ijtp} \label{Ijtp}
Set
\[
I_{j}(t,\p) := \left\| \frac{A+tB}{(\cdot-a_j(t)) p} \right\|_{L^{\p}(\R)}
\]
where we use $\frac{dy}{\pi}$ as the measure associated to $L^{\p}(\R)$. 
\end{restatable}

Note that \eqref{Icalc} shows
\begin{equation} \label{Icalc2}
I_j(t,2)^2 = \frac{A'(a_j(t))+t B'(a_j(t))}{B(a_j(t))}.
\end{equation}

From this heuristic we show
\[
\|Q/p\|_{L^{\p}} \approx \left(\sum_{j=1}^{M} \left| \frac{Q(a_j(t))}{A'(a_j(t)) + t B'(a_j(t))}\right|^{\p} I_{j}(t,\p)^{\p} \right)^{1/\p}.
\]
We can achieve this through some straightforward uses of Hölder's inequality and the $L^{\p}$
triangle inequality but the key is that the duality
of $L^2$ allows us to get lower bounds with 
crude control on the constants involved.

\begin{restatable}{prop}{onevarlp} \label{onevarlp}
  Let $p \in \C[y]$ be monic, have no zeros in $\overline{\uhp}$, and
   $\deg p = M$.
  Write $p = A+iB$, and let $a_1(t),\dots, a_M(t)$ denote the real roots of $A+tB$.  
  Define $I_j(t,\p)$ as in Definition \ref{Ijtp}.
  We use the measure $dy/\pi$ as the measure associated to $L^{\p}(\R)$.      

Then, for $1<\p<\infty$  and $Q \in \C[y]$ with $\deg Q < \deg p$ we have
\[
\|Q/p\|_{L^{\p}(\R)}
\leq M^{1/\p'} \left(\sum_{j=1}^{M} \left| \frac{Q(a_j(t))}{A'(a_j(t)) + t B'(a_j(t))}\right|^{\p} I_{j}(t,\p)^{\p} \right)^{1/\p}
\]
and
\[
\begin{aligned}
&\left(\sum_{j=1}^{M} \frac{|Q(a_j(t))|^{\p}}{|A'(a_j(t)) + t B'(a_j(t))|^{\p}}I_j(t,\p)^{\p}\right)^{1/\p} \\
&\leq 
\|Q/p\|_{L^{\p}(\R)}
\left(\sum_{j=1}^{M} \frac{|B(a_j(t))|^{\p}}{|A'(a_j(t))+t B'(a_j(t))|^{\p}} I_j(t,\p)^{\p} I_j(t,\p')^{\p}\right)^{1/\p}
\end{aligned}
\]
where $\frac{1}{\p} + \frac{1}{\p'} =1$.   \end{restatable}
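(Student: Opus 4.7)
The plan is to derive both inequalities from the two formulas in Theorem \ref{parseval}: the representation formula for $Q(y)$ drives the upper bound, while the reproducing kernel identity $(Q, K_{a_k(t)})_H = Q(a_k(t))$, re-read as an $L^{\p}$--$L^{\p'}$ duality pairing, drives the lower bound.

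For the upper bound I would divide the representation formula
\[
Q(y) = \sum_{j=1}^M \frac{Q(a_j(t))}{A'(a_j(t))+tB'(a_j(t))}\cdot\frac{A(y)+tB(y)}{y-a_j(t)}
\]
by $p(y)$, apply the $L^{\p}$ triangle inequality, and recognize each summand's $L^{\p}$ norm as $I_j(t,\p)$. This gives an $\ell^1$ estimate on $\|Q/p\|_{L^{\p}}$, which I would then convert to the claimed $\ell^{\p}$ form by a single application of Hölder's inequality against the constant sequence $(1,\dots,1)$, producing the factor $M^{1/\p'}$.

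For the lower bound the key observation is that because $A,B\in\R[y]$ and $a_k(t)\in\R$, the kernel $K(y,a_k(t)) = B(a_k(t))(A(y)+tB(y))/(y-a_k(t))$ takes real values for real $y$, so the reproducing identity becomes
\[
Q(a_k(t)) = B(a_k(t))\int_{\R}\frac{Q(y)}{p(y)}\cdot\frac{A(y)+tB(y)}{(y-a_k(t))\overline{p(y)}}\,\frac{dy}{\pi}.
\]
Hölder's inequality with exponents $\p,\p'$, together with $|\bar p|=|p|$ on $\R$, then yields the pointwise estimate $|Q(a_k(t))| \leq |B(a_k(t))|\,\|Q/p\|_{L^{\p}}\,I_k(t,\p')$. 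Multiplying both sides by $I_k(t,\p)/|A'(a_k(t))+tB'(a_k(t))|$, raising to the $\p$-th power, summing in $k$, and taking $\p$-th roots delivers the stated lower bound.

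The proof has no real obstacle; each half is a single application of Hölder. The role of the $L^2$ Hilbert space structure is solely to produce the explicit polynomial kernel representing point evaluation at $a_k(t)$ as an integral, which is then re-interpreted as an $L^{\p}$--$L^{\p'}$ pairing with $Q/p$. The ``crudeness'' alluded to in the text is just that our pointwise estimate $c_k \leq \|Q/p\|_{L^{\p}} w_k$ immediately yields $(\sum_k c_k^{\p})^{1/\p} \leq \|Q/p\|_{L^{\p}}(\sum_k w_k^{\p})^{1/\p}$ without any need for reverse-Minkowski or sharp constant tracking.
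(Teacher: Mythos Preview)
Your upper bound argument is identical to the paper's. For the lower bound your route is correct but differs from the paper's: you apply H\"older pointwise to each reproducing identity $Q(a_k(t)) = (Q,K_{a_k(t)})$, obtaining $|Q(a_k(t))| \leq |B(a_k(t))|\,\|Q/p\|_{L^{\p}}\,I_k(t,\p')$, and then simply $\ell^{\p}$-sum these estimates after weighting. The paper instead tests $Q$ against a single auxiliary polynomial $H = \sum_j c_j K_{a_j(t)}$ with coefficients $c_j$ chosen so that $\sum_j Q(a_j(t))\bar{c_j}$ realizes the $\ell^{\p}$-sum on the left, and then bounds $\|H/p\|_{L^{\p'}}$ by the triangle inequality followed by H\"older on the resulting sum. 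Both arguments land on exactly the same inequality; yours is a bit more direct since it bypasses the extremal choice of $c_j$, while the paper's is closer in spirit to a standard duality-based proof of $\ell^{\p}$--$L^{\p}$ sampling equivalences.
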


\begin{proof}
There is no substantial difference in the proof for the case $t=0$
and the general case so we shall assume $t=0$.  
The roots $a_j(t)$ in the case $t=0$ will just be written as $a_j$.
Similarly, $I_j(0,\p)$ will just be written $I_j(\p)$.

By Theorem \ref{parseval} 
\[
\frac{Q(y)}{p(y)}  =  \sum_{j=1}^{M} \frac{Q(a_j)}{A'(a_j)} \frac{A(y)}{y-a_j} \frac{1}{p(y)}
\]
and therefore
\[
\| Q/p\|_{L^{\p}(\R)} \leq \sum_{j=1}^{M} |Q(a_j)/A'(a_j)| I_j(\p)
\leq \left(\sum_{j=1}^{M} |Q(a_j)/A'(a_j)|^{\p} I_j(\p)^{\p} \right)^{1/\p} M^{1/\p'}
\]
by $L^{\p}$-triangle inequality and H\"older's inequality for sums.

To prove our inequality in the opposite direction
we set $H(y) = \sum_{j=1}^{M} c_j K_{a_j}(y)$ where $K$
is the kernel function from before
\[
K_{a_j}(y) = B(a_j) \frac{ A(y)}{y-a_j}
\]
and the constants $c_j$ are to be determined.
We have
\begin{equation} \label{Qaj}
|\sum_{j=1}^{M} Q(a_j) \bar{c_j}| = |(Q, H)|   \leq \|Q/p\|_{L^{\p}(\R)} \|H/p\|_{L^{\p'}(\R)}.
\end{equation}
Now let 
\begin{equation} \label{cj}
c_j = \overline{\text{sgn}(Q(a_j))} \frac{|Q(a_j)|^{\p-1}}{|A'(a_j)|^{\p}} I_j(\p)^{\p}
\end{equation}
so that 
\begin{equation} \label{Qaj2}
|\sum_{j=1}^{M} Q(a_j) \bar{c_j}| = \sum_{j=1}^{M} |Q(a_j)/A'(a_j)|^{\p} I_j(\p)^{\p}.
\end{equation}
By the $L^{\p'}$ triangle inequality,
\begin{equation} \label{Hoverp}
\|H/p\|_{L^{\p'}(\R)} \leq \sum_{j=1}^{M} |c_j| \|K_{a_j}\|_{L^{\p'}(\R)}
\end{equation}
Now
\[
\|K_{a_j}\|_{L^{\p'}(\R)} = |B(a_j)| I_j(\p')
\]
and therefore by \eqref{cj} and \eqref{Hoverp}
\[
\begin{aligned}
\|H/p\|_{L^{\p'}(\R)} &\leq \sum_{j=1}^{M} \frac{|Q(a_j)|^{\p-1}}{|A'(a_j)|^{\p-1}}  I_j(\p)^{\p/\p'} \frac{|B(a_j)|}{|A'(a_j)|} I_j(\p) I_j(\p')\\
&\leq 
\left(\sum_{j=1}^{M} \frac{|Q(a_j)|^{\p}}{|A'(a_j)|^{\p}}  I_j(\p)^{\p} \right)^{1/\p'} \left(\sum_{j=1}^{M} \frac{|B(a_j)|^{\p}}{|A'(a_j)|^{\p}} I_j(\p)^{\p} I_j(\p')^{\p}\right)^{1/\p}.
\end{aligned}
\]
Putting this together with \eqref{Qaj} and \eqref{Qaj2} yields
\[
\left(\sum_{j=1}^{M} |Q(a_j)/A'(a_j)|^{\p} I_j(\p)^{\p} \right)^{1/\p} \leq \left(\sum_{j=1}^{M} \frac{|B(a_j)|^{\p}}{|A'(a_j)|^{\p}} I_j(\p)^{\p} I_j(\p')^{\p}\right)^{1/\p} \|Q/p\|_{L^{\p}(\R)}.
\]
\end{proof}

We state the case $\p=1$ separately.

\begin{prop} \label{onevarlone}
 Assume the setup of Proposition \ref{onevarlp}.
For $\delta>0$ and $j=1,\dots, M$, let 
\[
I_j^{\delta}(t) = \int_{|y|<\delta} \left| \frac{A(y)+tB(y)}{(y-a_j(t))p(y)}\right| \frac{dy}{\pi}. 
\]
Then, for $Q \in \C[y]$ with $\deg Q \leq M-1$ 
\[
\|Q/p\|_{L^1(-\delta,\delta)} \leq 
\sum_{j=1}^{M} \left|\frac{Q(a_j(t))}{A'(a_j(t)) + t B'(a_j(t))}\right| I_j^{\delta}(t)
\]
and if $\deg Q \leq M-2$ we have
\[
\sum_{j=1}^{M} \left|\frac{Q(a_j(t))}{A'(a_j(t)) + t B'(a_j(t))}\right| I_j^{\delta}(t)
\leq 
\|Q/p\|_{L^1(\R)} 
\sum_{j=1}^{M} \left|\frac{B(a_j(t))}{A'(a_j(t)) + t B'(a_j(t))}\right| I_j(t,\infty) I_j^{\delta}(t).
\]

\end{prop}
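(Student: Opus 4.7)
The plan is to mimic the proof of Proposition \ref{onevarlp}, replacing H\"older's inequality with the $L^1$--$L^\infty$ duality while keeping the interpolation formula from Theorem \ref{parseval} as the main input. As in the proof of Proposition \ref{onevarlp}, there is no substantive difference between the $t=0$ case and the general case, so I would write everything with $t$ present but think of it as a harmless parameter.

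\textbf{Upper bound.} For the first inequality I would start from the interpolation formula
\[
Q(y) = \sum_{j=1}^{M} \frac{Q(a_j(t))}{A'(a_j(t)) + tB'(a_j(t))} \cdot \frac{A(y)+tB(y)}{y-a_j(t)},
\]
which is valid whenever $\deg Q < M$. Dividing by $p(y)$, taking absolute values, integrating over $(-\delta,\delta)$, and applying the triangle inequality for $L^1$ produces exactly
\[
\|Q/p\|_{L^1(-\delta,\delta)} \leq \sum_{j=1}^{M} \frac{|Q(a_j(t))|}{|A'(a_j(t))+tB'(a_j(t))|}\, I_j^{\delta}(t).
\]

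\textbf{Lower bound via duality.} For the second inequality I would imitate the dual argument used for $\p > 1$, but now pair against $L^\infty$. Form the test function $H(y) = \sum_{j=1}^{M} c_j K_{a_j(t)}(y)$ using the reproducing kernels $K_{a_j(t)}(y) = B(a_j(t))(A(y)+tB(y))/(y-a_j(t))$, and rewrite the reproducing identity as
\[
\sum_{j=1}^{M} \bar{c}_j Q(a_j(t)) = (Q,H) = \int_{\R} \frac{Q}{p} \cdot \overline{\frac{H}{p}}\, \frac{dy}{\pi},
\]
so that $\bigl|\sum_j \bar{c}_j Q(a_j(t))\bigr| \leq \|Q/p\|_{L^1(\R)} \|H/p\|_{L^\infty(\R)}$. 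Choosing
\[
c_j = \overline{\operatorname{sgn}(Q(a_j(t)))}\, \frac{I_j^{\delta}(t)}{|A'(a_j(t))+tB'(a_j(t))|}
\]
collapses the left side to the sum we are after. The right side is controlled by the $L^\infty$ triangle inequality together with the identity $\|K_{a_j(t)}/p\|_{L^\infty(\R)} = |B(a_j(t))|\, I_j(t,\infty)$, which plugs straight into the claimed bound.

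\textbf{Main subtlety.} The expected obstacle is not analytic but bookkeeping: one must justify the degree restriction $\deg Q \leq M-2$ for the lower bound. When $\deg Q = M-1$ one only has $Q/p = O(1/|y|)$ at infinity and $\|Q/p\|_{L^1(\R)}$ is infinite, so the $L^1$--$L^\infty$ duality is vacuous; the stricter hypothesis $\deg Q \leq M-2$ gives $Q/p = O(1/y^2)$ and genuine $L^1(\R)$ integrability. Correspondingly one must verify that $H/p \in L^\infty(\R)$, which follows because $(A+tB)/p \to 1$ at infinity (as $p$ is monic of degree $M$), so each $K_{a_j(t)}/p$ decays like $1/|y|$. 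The upper bound has no such trouble because it uses only the local $L^1(-\delta,\delta)$ norm, and hence the weaker restriction $\deg Q \leq M-1$ inherited from the interpolation formula suffices.
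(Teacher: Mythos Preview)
Your proposal is correct and follows essentially the same argument as the paper: the upper bound is the interpolation formula plus the $L^1$ triangle inequality, and the lower bound is the duality argument with $H=\sum_j c_j K_{a_j(t)}$, the same choice of $c_j$, and the $L^1$--$L^\infty$ pairing. Your remarks on the degree restriction $\deg Q\le M-2$ (to force $Q/p\in L^1(\R)$) and on $K_{a_j(t)}/p\in L^\infty(\R)$ are exactly the points the paper uses.
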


\begin{proof}
As in the previous proof the case $t=0$ is no different from 
arbitrary $t$.
The first inequality is just the $L^1$-triangle inequality as in the previous
proof.  For the second inequality we assume $\deg Q \leq M-2$
so that $Q/p$ belongs to $L^1(\R)$.  
We can follow the same proof as above except we set
\[
c_j = \overline{\text{sgn}(Q(a_j))} \frac{1}{|A'(a_j)|} I_j^{\delta}
\]
and $H = \sum_j c_j K_{a_j}$.  
Then,
\[
|(Q,H)|  = \sum_{j=1}^{M} \frac{|Q(a_j)|}{|A'(a_j)|} I_j^{\delta}
\leq \|Q/p\|_{L^1} \|H/p\|_{L^{\infty}}
\]
and now we simply use the $L^{\infty}$ triangle
inequality on $H$
\[
\|H/p\|_{L^{\infty}} \leq \sum_{j=1}^{M} \frac{|B(a_j)|}{|A'(a_j)|} I_j^{\delta} I_j(\infty).
\]
\end{proof}

\section{Characterization of local integrability}

With the one variable formulas in place, 
we use parametrized versions of them to
characterize local integrability.
Following our outline we work with a polynomial of 
the form \eqref{pbrack} instead of our original $P$
with no zeros on $\uhp^2$.

Our first goal is to prove the following characterization
of local square integrability.

\begin{restatable}{theorem}{ltwochar} \label{l2char}
Let 
\[ 
P(x,y) = \prod_{j=1}^{M} (y+q_j(x)+ i x^{2L_j})
\] 
where $L_1,\dots, L_M \in \mathbb{N}$ and $q_1,\dots, q_M \in \R[x]$ with 
$\deg q_j < 2L_j$ and $q_j(0)=0$.
Let $A = (1/2)(P+\refl{P}), B= (1/(2i))(P-\refl{P})$.
By Theorem \ref{facerealbranches}, $A+tB$
has $M$ smooth branches, labelled $y-a_j(x;t) = 0$ for $j=1,\dots, M$, passing through 
$(0,0)$.

Let $Q \in \C\{x,y\}$.  Then, $Q/P \in L^2_{loc}$ if and only if
for $j=1,\dots, M$
\begin{equation} \label{Qord}
2 \text{Ord}(Q(x,a_j(x;t))) \geq \text{Ord}(B(x,a_j(x;t))(A_y(x,a_j(x;t)) + t B_y(x,a_j(x;t)))).
\end{equation}
\end{restatable}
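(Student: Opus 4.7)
The plan is to apply the one-variable quadrature formula from Theorem \ref{parseval} pointwise in $x$ so as to translate the two-variable integrability of $|Q/P|^2$ into finiteness of a finite sum of one-variable integrals along the analytic branches $y = a_j(x;t)$ of $A+tB$, and then extract the asserted order-of-vanishing condition.

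First I would use Weierstrass division to write $Q = Q_0 + Q_1 P$ with $\deg_y Q_0 < M$ and $Q_0,Q_1 \in \C\{x,y\}$. Since $Q_1$ is locally bounded, $Q/P \in L^2_{loc}$ iff $Q_0/P \in L^2_{loc}$. I would also observe that for $|x|$ small and $|y| \geq \epsilon$ one has $|Q_0/P| \leq C/|y|$ (since $\deg_y Q_0 < \deg_y P = M$), so $Q_0/P \in L^2_{loc}$ is equivalent to $Q_0/P \in L^2((-\epsilon,\epsilon) \times \R)$ for small $\epsilon > 0$.

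Next, for each real $x \ne 0$ near $0$ the roots of $y \mapsto P(x,y)$, namely $-q_k(x) - ix^{2L_k}$, lie in the open lower half plane, so Theorem \ref{parseval} applies pointwise in $x$ and gives
\[
\int_\R \left|\frac{Q_0(x,y)}{P(x,y)}\right|^2 \frac{dy}{\pi} = \sum_{j=1}^{M} \frac{|Q_0(x,a_j(x;t))|^2}{B(x,a_j(x;t))(A_y + tB_y)(x,a_j(x;t))}.
\]
Integrating in $x$ via Tonelli reduces the question to finiteness of $M$ one-variable integrals. Because Theorem \ref{facerealbranches} makes each $a_j(x;t)$ analytic in $x$, and Theorem \ref{onevarfacts} ensures the denominator is strictly positive for $x \ne 0$ small (hence vanishing to an even order at $0$), each $j$-th integrand behaves like $|x|^{2\,\text{Ord}(Q_0(x,a_j(x;t))) - \text{Ord}(\text{denom})}$ times an analytic, non-vanishing factor, and so is integrable iff $2\,\text{Ord}(Q_0(x,a_j(x;t))) \geq \text{Ord}(B(x,a_j(x;t))(A_y+tB_y)(x,a_j(x;t)))$.

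The last step, which I expect to be the main obstacle, is replacing $Q_0$ by $Q$ in this condition. From $Q - Q_0 = Q_1 P$ together with the identity $P(x,a_j(x;t)) = (i-t)B(x,a_j(x;t))$ one gets $\text{Ord}((Q-Q_0)(x,a_j(x;t))) \geq \text{Ord}(B(x,a_j(x;t)))$. Here I would invoke Corollary \ref{Bvanish} for proper $t$ to note $\text{Ord}(B(x,a_j(x;t))) = 2L_j + \text{Ord}((A_y+tB_y)(x,a_j(x;t)))$, so $\text{Ord}(B(x,a_j))$ is at least half the total order of the denominator. A short case analysis comparing the orders of $Q_0(x,a_j)$ and $Q_1(x,a_j)B(x,a_j)$ then shows the conditions on $Q_0$ and on $Q$ are equivalent for proper $t$. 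For a non-proper $t$ at which the branches still exist, a separate argument checking that both sides of the asserted inequality are invariant under deforming $t$ through the finite set of non-proper values would complete the proof.
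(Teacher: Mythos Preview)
Your overall strategy---Weierstrass division to reduce to $\deg_y Q < M$, then apply Theorem \ref{parseval} pointwise in $x$, integrate in $x$, and read off the order-of-vanishing criterion---matches the paper's proof exactly. The one substantive difference is in the Weierstrass division step: you divide by $P$, while the paper divides by $A+tB$. This seemingly minor choice is what creates your ``main obstacle'' at the end.

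If you divide by $A+tB$, writing $Q = Q_0 + Q_1(A+tB)$ with $\deg_y Q_0 < M$, then two things happen at once. First, $(A+tB)/P$ is bounded on $\R^2$ near the origin, so $Q_1(A+tB)/P$ is locally bounded and the $L^2_{loc}$ question reduces to $Q_0$ just as in your argument. Second, since $a_j(x;t)$ is a root of $A+tB$, one has $Q(x,a_j(x;t)) = Q_0(x,a_j(x;t))$ identically, so the order-of-vanishing condition \eqref{Qord} is literally the same for $Q$ and $Q_0$. No comparison argument, no appeal to Corollary \ref{Bvanish}, no restriction to proper $t$, and no separate treatment of non-proper $t$ is needed.

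Your route through dividing by $P$ forces you to compare $\text{Ord}(Q(x,a_j))$ and $\text{Ord}(Q_0(x,a_j))$, which differ by a multiple of $B(x,a_j)$. Your use of Corollary \ref{Bvanish} to show the discrepancy term automatically satisfies \eqref{Qord} is correct for proper $t$, but the theorem is stated (and proven in the paper) for arbitrary $t$, and your proposed fix for non-proper $t$ by ``deforming $t$'' is not fleshed out; both sides of \eqref{Qord} genuinely change with $t$, so invariance of the inequality is not automatic. The cleanest repair is simply to switch the divisor in the Weierstrass step from $P$ to $A+tB$.
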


We can apply Theorem \ref{facerealbranches} above because
the polynomial $P$ has no zeros in $(\R\times \overline{\uhp})\setminus \{(0,0)\}$.
By Theorem \ref{matchbranches}, the order of vanishing on the right side 
can be expressed in terms of the data $2L_1,\dots, 2L_M, q_1(x),\dots, q_M(x)$
for all but finitely many $t$.  
Before doing this, we must order the branches of $A+tB$ so that 
they correspond to factors of $P$ as in Theorem \ref{matchbranches}.
Once this is done we have
\[
\text{Ord}(B(x,a_j(x;t))(A_y(x,a_j(x;t)) + t B_y(x,a_j(x;t)))) = 2L_j + 2 \sum_{i:i\ne j} O_{ij}
\]
with $O_{ij}$ defined as in Definition \ref{Oij}.
Our condition on $Q$ is then expressed as
\[
\text{Ord}(Q(x,a_j(x;t))) \geq L_j + \sum_{i:i\ne j} O_{ij}.
\]

\begin{proof}
Given $Q(x,y) \in \C\{x,y\}$,
by the Weierstrass division theorem we may write
\[
Q(x,y) = Q_0(x,y) + Q_1(x,y) (A+tB)(x,y)
\]
where $Q_0\in \C\{x\}[y]$ has degree less than $M$ in $y$.
Since $(A+tB)/P$ is bounded, for the purposes of studying integrability of $Q/P$ 
we may replace $Q$ with $Q_0$ and
assume the degree in $y$ is less than $M$.
Notice that this does not change the condition \eqref{Qord} either.

For $x\in \R\setminus \{0\}$, $y\mapsto P(x,y)$ has no zeros
in $\overline{\uhp}$
and we can apply the one variable theory from Section \ref{onevarsec}
to $y\mapsto P(x,y)$ with $x$ fixed. 
By Theorem \ref{parseval},
\begin{equation} \label{Qform}
Q(x,y) = \sum_{j=1}^{M} \frac{Q(x,a_j(x;t))}{A_y(x,a_j(x;t)) + t B_y(x,a_j(x;t))} \frac{A(x,y)+tB(x,y)}{y- a_j(x;t)}
\end{equation}
\[
\int_{\R} \left|\frac{Q(x,y)}{P(x,y)}\right|^2 \frac{dy}{\pi} = \sum_{j=1}^{M}
 \frac{|Q(x,a_j(x;t))|^2}{B(x,a_j(x;t))(A_y(x,a_j(x;t)) + t B_y(x,a_j(x;t)))}.
\]
Note that the denominator
\[
B(x,a_j(x;t))(A_y(x,a_j(x;t)) + t B_y(x,a_j(x;t)))
\]
is analytic in $x$ and positive for $x\ne 0$.
If we integrate over $x \in (-\epsilon, \epsilon)$
we see that 
\[
\int_{(-\epsilon, \epsilon)\times \R} |Q(x,y)/P(x,y)|^2 \frac{dx dy}{\pi} <\infty
\]
if and only if for $j=1,\dots M$
\[
\int_{(-\epsilon, \epsilon)} \frac{|Q(x,a_j(x;t))|^2}{B(x,a_j(x;t))(A_y(x,a_j(x;t)) + t B_y(x,a_j(x;t)))} dx <\infty
\]
and this holds if and only if
\[
2 \text{Ord}(Q(x,a_j(x;t))) \geq \text{Ord}(B(x,a_j(x;t))(A_y(x,a_j(x;t)) + t B_y(x,a_j(x;t)))).
\]
Finally, we must establish that
\begin{equation} \label{bigy}
\int_{|y|>\epsilon} \int_{|x|<\epsilon}  |Q(x,y)/P(x,y)|^2 dx dy < \infty
\end{equation}
without any conditions on $Q$ (beyond $\deg_y Q(x,y) < M$).
For $|x|<\epsilon$, $|Q(x,y)| \leq C |y|^{M-1}$
and $|P(x,y)| \geq c|y|^{M}$ for $y$ large enough.
Since $P(x,y)$ is non-vanishing for $(x,y) \ne (0,0)$, 
for intermediate ranges of $y$ we have that $P$ is bounded below.
Thus, \eqref{bigy} holds because for large $y$ the integrand is 
of the order $1/|y|^2$ and hence integrable at $\infty$.

Thus \eqref{Qord} is equivalent to local square integrability and 
proves the theorem. 
\end{proof}

\subsection{Characterization of local $L^{\p}$ integrability}

We can follow roughly the same strategy as we did for square integrability
except that Proposition \ref{onevarlp} 
which provides some type of equivalence
of $L^{\p}$ and $\ell^{\p}$ norms
in one variable has some dependence on $x$
when we put in $x$ as an additional parameter.
In order to control the constants as a
function of $x$ we must use a proper $t$ as
in Theorem \ref{matchbranches} thereby
allowing us to use Corollary \ref{Bvanish}.

Our goal is to prove the following.

\begin{restatable}{theorem}{lpchar} \label{lpchar}
Assume the same setup as Theorem \ref{l2char}.
Namely, we are given $P$ as in \eqref{charP} and associated
$A,B$ and branches $y-a_j(x;t)$ of $A+tB$.
We shall order the branches $y-a_j(x;t)$ so that they
correspond to the branches of $P$ as in Theorem \ref{matchbranches}; in particular,
we assume $t \in \R$ is proper (Definition \ref{proper}).

Let $1\leq \p < \infty$.
For $Q\in \C\{x,y\}$, $Q/P \in L^{\p}_{loc}$ if and only if
for $j=1,\dots, M$
\[
\text{Ord}(Q(x,a_j(x;t))) \geq \sum_{i=1}^{M} O_{ij} - \left\lceil \frac{2L_j+1}{\p} \right\rceil +1.
\]
\end{restatable}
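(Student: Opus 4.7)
The argument parallels that of Theorem \ref{l2char}, with the Parseval identity of Theorem \ref{parseval} replaced by the one-variable $L^\p$--$\ell^\p$ norm equivalence of Proposition \ref{onevarlp} (for $1<\p<\infty$) or Proposition \ref{onevarlone} (for $\p=1$). Since $A+tB$ is monic in $y$ of degree $M$, Weierstrass division writes $Q=Q_0+Q_1(A+tB)$ with $\deg_y Q_0<M$; the piece $Q_1(A+tB)/P$ is locally bounded (because $(A+tB)/P$ is) and it also makes no contribution to the left-hand side of the claimed inequality since $(A+tB)(x,a_j(x;t))=0$. So we may assume $\deg_y Q<M$. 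For $x$ in a punctured neighborhood of $0$, $P(x,\cdot)$ has no zeros in $\overline{\uhp}$, and Proposition \ref{onevarlp} produces the two-sided estimate
\[
\|Q(x,\cdot)/P(x,\cdot)\|_{L^\p(\R)}^\p \approx \sum_{j=1}^{M} \frac{|Q(x,a_j(x;t))|^\p}{|(A_y+tB_y)(x,a_j(x;t))|^\p}\, I_j(x,t,\p)^\p,
\]
with implicit constants controlled, above and below, by the weight
\[
W_j(x) := \frac{|B(x,a_j(x;t))|^\p\, I_j(x,t,\p)^\p\, I_j(x,t,\p')^\p}{|(A_y+tB_y)(x,a_j(x;t))|^\p}.
\]

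\textbf{Key asymptotic.} The main technical input is
\[
I_j(x,t,\p)^\p \approx |x|^{2L_j(1-\p)} \quad (1<\p<\infty), \qquad I_j(x,t,\p')^\p \approx |x|^{-2L_j},
\]
with $I_j^\delta(x,t) \approx \log(1/|x|)$ in the $\p=1$ case. Combined with Corollary \ref{Bvanish}, these give $W_j(x)\approx 1$ as $x\to 0$, so the equivalence above holds with constants independent of $x$. To prove the asymptotic, expand the integrand defining $I_j(x,t,\p)^\p$ as a product of numerator factors $|y-a_k(x;t)|^\p$ ($k\neq j$) and denominator factors $|y+q_k(x)+ix^{2L_k}|^\p$, shift $y=a_j(x;t)+u$, and use $|a_k(x;t)-a_j(x;t)|\approx |x|^{O_{jk}}$ from Theorem \ref{matchbranches}. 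A local scaling analysis shows that the ``$k=j$ peak'' $|u|\lesssim |x|^{2L_j}$ dominates: for $k\neq j$ the numerator factor $|u-(a_k-a_j)|^\p$ pairs with the denominator factor $|u+(a_j-a_k)+x^{2L_k}(i-\psi_k)|^\p$ to cancel in order, leaving an integrand essentially of the form $1/(u^2+x^{4L_j})^{\p/2}$ near $u=0$; rescaling $u=|x|^{2L_j}v$ yields the claimed power. Competing peaks at $y\approx -q_k(x)$ with $k\neq j$ contribute only $|x|^{2L_k}$, strictly less.

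\textbf{Integration in $x$.} Granted $W_j(x)\approx 1$, integrating the equivalence over $|x|<\epsilon$ shows that $Q/P\in L^\p_{loc}$ if and only if, for each $j$,
\[
\int_{|x|<\epsilon} |x|^{\p\,\text{Ord}(Q(x,a_j(x;t)))\,+\,2L_j(1-\p)\,-\,\p\sum_{i\neq j}O_{ij}}\, dx < \infty,
\]
using $\text{Ord}((A_y+tB_y)(x,a_j(x;t)))=\sum_{i\neq j}O_{ij}$ from Corollary \ref{Bvanish}. Convergence is equivalent to the exponent exceeding $-1$, i.e.\ $\text{Ord}(Q(x,a_j(x;t)))>\sum_i O_{ij}-(2L_j+1)/\p$; since $\text{Ord}$ is integer-valued, this is precisely $\text{Ord}(Q(x,a_j(x;t)))\geq \sum_i O_{ij}-\lceil (2L_j+1)/\p\rceil+1$. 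The tail $|y|\geq\epsilon$, $|x|<\epsilon$ is harmless for $\p>1$ because $|Q/P|$ decays like $1/|y|$ when $\deg_y Q<M$; for $\p=1$ we use the truncated Proposition \ref{onevarlone} instead, and the resulting $\log(1/|x|)$ factor does not change the integer-level test.

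\textbf{Main obstacle.} The delicate step is the asymptotic for $I_j(x,t,\p)^\p$, which requires identifying the dominant peak of a multi-factor rational integrand and exploiting the cancellations between numerator and denominator factors. These cancellations rely crucially on the branches of $A+tB$ matching the branches of $P$ in the precise sense of Theorem \ref{matchbranches}, which is why we fix a proper $t$. Once this scaling is in hand the remaining bookkeeping is routine.
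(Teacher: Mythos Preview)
Your overall strategy matches the paper's: reduce to $\deg_y Q<M$ via Weierstrass division, apply the one-variable $L^\p$--$\ell^\p$ comparison (Propositions \ref{onevarlp} and \ref{onevarlone}) with $x$ as a parameter, show the constants $W_j(x)$ are bounded above and below as $x\to 0$, then integrate in $x$ and read off the exponent condition. Two points deserve comment.

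\textbf{The asymptotic for $I_j(x,t,\p)$.} Your proposed ``local scaling analysis'' with competing peaks is more complicated than necessary and is only sketched. The paper obtains the upper bound directly from the factorization
\[
\frac{A(x,y)+tB(x,y)}{(y-a_j(x;t))P(x,y)} = \frac{1}{y+q_j(x)+ix^{2L_j}}\prod_{k\neq j}\frac{y+q_k(x)+x^{2L_k}\psi_k(x;t)}{y+q_k(x)+ix^{2L_k}},
\]
noting that each $k\neq j$ factor is uniformly bounded on $\R$ (since the imaginary part of the denominator is $x^{2L_k}$), so $|R(x,y)|\leq C/|y+q_j(x)+ix^{2L_j}|$ and the $L^\p$ integral is a single elementary one. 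The lower bound comes not from scaling but from H\"older's inequality $I_j(t,2)^2\leq I_j(t,\p)I_j(t,\p')$ together with the \emph{exact} formula $I_j(t,2)^2=(A_y+tB_y)(x,a_j)/B(x,a_j)\approx |x|^{-2L_j}$ from \eqref{Icalc2} and Corollary \ref{Bvanish}. This two-line argument avoids any peak analysis.

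\textbf{A gap at $\p=1$.} Your reduction leaves $\deg_y Q\leq M-1$, but the lower bound in Proposition \ref{onevarlone} requires $\deg_y Q\leq M-2$ so that $Q(x,\cdot)/P(x,\cdot)\in L^1(\R)$; if $\deg_y Q=M-1$ the global $L^1$ norm diverges and the duality argument there fails. The paper handles this by subtracting a leading-term correction: writing $Q_1=Q-LT_Q(x)\,\dfrac{A+tB}{y-a_1(x;t)}$ lowers the $y$-degree below $M-1$, and one checks separately that (i) the subtracted piece is already in $L^1_{loc}$ (by the $\p=1$ estimate of Lemma \ref{Iorder}) and (ii) the order-of-vanishing conditions on $Q$ and $Q_1$ along each branch agree. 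Without this extra step the necessity direction of the $\p=1$ case is not justified.
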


The key to using Proposition \ref{onevarlp} is estimating the 
quantities now dependent on the parameter $x$:
\[
I_j(t,\p)(x) := \left\| \frac{A(x,\cdot)+tB(x,\cdot)}{(\cdot-a_j(x;t))P(x,\cdot)} \right\|_{L^{\p}(\R)} 
\]
and 
\[
\frac{|B(x,a_j(x;t))|^{\p}}{|A'(x,a_j(x;t)) + t B'(x,a_j(x;t))|^{\p}} I_j(t,\p)^{\p} I_j(t,\p')^{\p}.
\]

\begin{lemma} \label{Iorder}
Let $t\in \R$ be proper as in Theorem \ref{matchbranches}
and let $1<\p < \infty$.
Then, there exist constants $\epsilon, c, C>0$ such that
for each $j=1,\dots, M$ and $|x|<\epsilon$
\[
c x^{-2L_j(1-1/\p)} \leq I_j(t,\p)(x) \leq C x^{-2L_j(1-1/\p)}.
\]
For $\p =1$, $t\in \R$ proper, and $\delta>0$, there exist $\epsilon, c, C>0$ such that 
\[
c \leq \int_{(-\delta,\delta)} \left| \frac{A(x,y)+tB(x,y)}{(y-a_j(x;t))P(x,y)} \right| \frac{dy}{\pi} \leq C \log(1/|x|)
\]
for $|x|<\epsilon$.

For $\p = \infty$, $t\in \R$ proper, and $\delta>0$, there exist $\epsilon, c, C>0$ such that 
\[
c \log(1/|x|)^{-1} x^{-2L_j} \leq \sup_{|y|<\delta} \left| \frac{A(x,y)+tB(x,y)}{(y-a_j(x;t))P(x,y)} \right| \leq I_j(t,\infty) \leq C x^{-2L_j}
\]
for $|x|<\epsilon$.
\end{lemma}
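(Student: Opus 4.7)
The plan is to use the factorization from Theorem \ref{matchbranches} and reduce the $L^\p$ estimates to a rescaled integral via an explicit change of variable. For proper $t$, write
\[
\frac{(A+tB)(x,y)}{(y-a_j(x;t))P(x,y)} = \frac{u(x,y;t) \prod_{i \ne j}(y - a_i(x;t))}{\prod_i(y + q_i(x) + ix^{2L_i})},
\]
where $u$ is a unit analytic near $(0,0)$ and $a_i(x;t) = -q_i(x) - x^{2L_i}\psi_i(x;t)$ with $\psi_i(\cdot;t) \in \R\{x\}$. The poles in $y$ are at $y = -q_i(x) - ix^{2L_i}$, and the relevant imaginary distance of the $j$-th pole from the real axis is $x^{2L_j}$, which determines the order in $x$.

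For $1 < \p < \infty$, I would make the substitution $y = -q_j(x) + x^{2L_j} s$. Then $dy = x^{2L_j}\,ds$, the $j$-th denominator factor becomes $x^{2L_j}(s+i)$, and for $i \ne j$ the denominator and numerator factors become
\[
q_i(x)-q_j(x) + x^{2L_j} s + ix^{2L_i}, \qquad q_i(x)-q_j(x) + x^{2L_j} s + x^{2L_i}\psi_i(x;t).
\]
This leads to
\[
I_j(t,\p)(x)^\p = x^{2L_j(1-\p)} \int_{\R} \frac{|u|^\p}{|s+i|^\p}\prod_{i\ne j}|R_i(x,s;t)|^\p\,\frac{ds}{\pi},
\]
with $R_i$ the ratio of the numerator over the denominator factors above. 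The central analytic claim is that for each compact $K \subset \R$, the product $\prod_{i \ne j}|R_i(x,s;t)|$ is bounded above and below by positive constants uniformly in $x$ small and $s \in K$. Both numerator and denominator of $R_i$ vanish in $x$ to order exactly $\min\{\mathrm{Ord}(q_i - q_j), 2L_i\}$ (the $s$-perturbation contributes higher-order terms when $s$ is bounded), and after dividing by the appropriate power of $x$ the limits as $x \to 0$ are nonzero for proper $t$: the factor $i$ in the denominator rules out cancellation between $q_i - q_j$ and $ix^{2L_i}$, while the assumption that $t$ is proper rules out the analogous exceptional cancellation in the numerator. Together with the fact that the original integrand decays like $|y|^{-1}$ at infinity (giving a finite tail $|s|\gtrsim x^{-2L_j}$), this yields the claimed two-sided bound $I_j \asymp x^{-2L_j(1-1/\p)}$; the upper bound uses the global decay, and the lower bound comes from restricting the integral to a fixed compact $s$-interval.

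The cases $\p=1$ and $\p=\infty$ follow from the same substitution but with endpoint considerations. For $\p = 1$, after substitution the dominant contribution is $\int_{|s|\lesssim \delta x^{-2L_j}} |s+i|^{-1}\,ds \asymp \log(\delta x^{-2L_j}) \asymp \log(1/|x|)$, giving the claimed upper bound; a positive lower bound $c$ follows by integrating over a fixed compact region in $s$. For $\p = \infty$, the upper bound $I_j(t,\infty) \leq C x^{-2L_j}$ follows from the trivial estimate $|P(x,y)| \geq |y + q_j + ix^{2L_j}| \geq x^{2L_j}$ combined with the boundedness of the remaining rational factors; the lower bound follows by evaluating at $y = -q_j(x)$, where the product of $R_i(x,0;t)$'s is in fact bounded away from $0$, so the stated $\log(1/|x|)^{-1}$ is a conservative loss sufficient for later applications.

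The main obstacle is the algebraic verification of the uniform two-sided bound on $\prod_{i\ne j}|R_i(x,s;t)|$, which is essentially a parametrized version of the order-of-vanishing analysis behind Corollary \ref{Bvanish}. One must check that the two leading-order expansions in $x$ of the numerator and denominator of $R_i$ match in magnitude and have non-cancelling limits; the definition of proper $t$ (via Proposition \ref{distval}) is exactly what prevents pathological cancellations between $\psi_i(0;t)$ and the relevant coefficient of $q_i - q_j$ at order $2L_i$, while the fixed imaginary unit $i$ plays the corresponding role in the denominator.
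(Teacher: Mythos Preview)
Your approach is genuinely different from the paper's and is essentially sound, but there is one real gap and one minor slip.

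\textbf{The gap (upper bound, $1<\p<\infty$).} After the substitution $y=-q_j(x)+x^{2L_j}s$ you claim that boundedness of $\prod_{i\ne j}|R_i|$ on a fixed compact $K$ in $s$, together with the $|y|^{-1}$ tail, gives the upper bound. But the $|y|^{-1}$ tail only kicks in for $|y|\gtrsim 1$, i.e.\ $|s|\gtrsim x^{-2L_j}$; the intermediate range $K_0<|s|<x^{-2L_j}$ is not covered by either estimate and its length blows up as $x\to 0$. The remedy is a one-line observation that actually gives a \emph{global} bound:
\[
R_i(x,s;t)-1=\frac{x^{2L_i}(\psi_i(x;t)-i)}{q_i(x)-q_j(x)+x^{2L_j}s+ix^{2L_i}},
\]
and since the denominator has imaginary part $x^{2L_i}$, we get $|R_i-1|\le |\psi_i(x;t)-i|=O(1)$ uniformly in $s\in\R$. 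Hence $|R_i|\le C$ for all $s$, and the upper bound follows immediately from $\int_\R|s+i|^{-\p}\,ds<\infty$. This is exactly how the paper handles the upper bound.

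\textbf{Minor slip.} You state that both numerator and denominator of $R_i$ vanish to order $\min\{\text{Ord}(q_i-q_j),2L_i\}$, dismissing the $x^{2L_j}s$ term as higher order. That is wrong when $2L_j\le\min\{\text{Ord}(q_i-q_j),2L_i\}$: then $x^{2L_j}s$ is the leading term and the correct order is $O_{ij}=\min\{\text{Ord}(q_i-q_j),2L_i,2L_j\}$. Fortunately in that case the leading coefficients of numerator and denominator are both $s$, so $R_i\to 1$ away from $s=0$ and your conclusion survives; you just need to choose the compact interval for the lower bound to avoid $s=0$ and the finitely many points $s=-\psi_i(0;t)$ (the latter only when $(2L_i,q_i)=(2L_j,q_j)$).

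\textbf{Comparison with the paper.} For the upper bounds the paper uses the global pointwise estimate above directly (no change of variables needed). For the lower bounds the paper does \emph{not} do any direct analysis of $R_i$ from below; instead it uses the exact identity $I_j(t,2)^2=(A_y+tB_y)/B$ evaluated at $a_j(x;t)$ (equation \eqref{Icalc2}), which by Corollary \ref{Bvanish} is $\asymp x^{-2L_j}$, and then bootstraps via H\"older: $I_j(t,2)^2\le I_j(t,\p)I_j(t,\p')\le C\,I_j(t,\p)\,x^{-2L_j/\p}$. The endpoint cases $\p=1,\infty$ are likewise obtained from one another by the inequality $I_j(t,2)^2+O(1)\le(\text{local }L^1)\cdot(\text{local }L^\infty)$. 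Your route is more hands-on and avoids the $L^2$ formula and Corollary \ref{Bvanish} entirely, at the cost of the casework on $R_i$; the paper's duality argument is shorter but leans on machinery already in place. Your $\p=\infty$ lower bound via evaluation at $y=-q_j(x)$ in fact yields the sharper $\ge c\,x^{-2L_j}$ (once you verify, using Proposition \ref{branchprop}, that $\psi_i(0;t)$ avoids the coefficient of $x^{2L_i}$ in $q_j$), so the $\log(1/|x|)^{-1}$ you quote is indeed conservative.
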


\begin{proof}
For $t$ proper as in Theorem \ref{matchbranches} we note that
\[
\frac{A(x,y)+tB(x,y)}{(y-a_j(x;t))P(x,y)} 
= \frac{1}{y+q_j(x)+ix^{2L_j}} \prod_{k:k\ne j} \frac{y+q_k(x) +x^{2L_k} \psi_k(x;t)}{y+q_k(x) + ix^{2L_k}}.
\]
To bound the factors of the product we note
\[
\frac{y+q_k(x) +x^{2L_k} \psi_k(x;t)}{y+q_k(x) + ix^{2L_k}} -1 = 
\frac{x^{2L_k}(\psi_k(x;t) - i)}{y+q_k(x) + ix^{2L_k}}
\]
and have
\[
\left|\frac{y+q_k(x) +x^{2L_k} \psi_k(x;t)}{y+q_k(x) + ix^{2L_k}}\right| \leq
1 + \frac{x^{2L_k}(O(1))}{x^{2L_k}} \leq \text{ const}
\]
for $x$ small and $y\in \R$ arbitrary.
We conclude
\[
\left|\frac{A(x,y)+tB(x,y)}{(y-a_j(x;t))P(x,y)}\right| \leq \frac{C}{|y+q_j(x)+ix^{2L_j}|} 
\]
for $x$ small.  Let us label the frequently used expression
\[
R(x,y) := \frac{A(x,y)+tB(x,y)}{(y-a_j(x;t))P(x,y)};
\]
the dependence on $t$ and $j$ will only be important
insofar as $t$ is proper and fixed and $j$ comes 
from a finite set.

Now, for $\p >1$
\[
I_{j}(t,\p)(x)^{\p} \leq C \int_{\R} \frac{1}{|y+q_j(x) + i x^{2L_j}|^{\p}} dy
= C \int_{\R} \frac{1}{|y + i x^{2L_j}|^{\p}} dy 
\leq C x^{2L_j(1-\p)}
\]
for $x$ small. So, $I_{j}(t,\p)(x) \leq C x^{-2L_j(1-1/\p)}= C x^{-2L_j/\p'}$.
For $\p = 1$ we have
\[
\int_{(-\delta,\delta)} |R(x,y)| \frac{dy}{\pi}
\leq C \int_{(-\delta, \delta)} \frac{1}{|y+q_j(x)+ix^{2L_j}|} \frac{dy}{\pi}.
\]
By integrating over $|y+q_j(x)| < x^{2L_j}$ and $|y+q_j(x)| \geq x^{2L_j}$
we can bound the integral by $C \log(1/|x|)$.  
For the case $\p = \infty$, we have
\[
|R(x,y)| \leq C x^{-2L_j}
\]
since $|A/P|$, $|B/P|$ are bounded on $\R^2$.

To prove bounds below, we first use
 \eqref{Icalc2} and H\"older's inequality, for $\p >1$
\[
\left|\frac{A_y(x,a_j(x;t))+tB_y(x,a_j(x;t))}{B(x,a_j(x;t))}\right| = I_j(t,2)(x)^2  \
\leq I_j(t,\p)(x) I_j(t,\p')(x)  \leq C I_j(t,\p) x^{-2L_j/\p}
\]
using $I_j(t,\p') \leq C x^{-2L_j/\p}$.  
The left side behaves like $x^{-2L_j}$
by Corollary \ref{Bvanish}
so 
we see that
\[
I_j(t,\p)(x) \geq c x^{-2L_j(1-1/\p)}
\]
Thus, $I_j(t,\p)(x) \approx x^{-2L_j(1-1/\p)} = x ^{-2L_j/\p'}$.

To handle $\p=1,\infty$, we first point out that
\[
I_j(t,2)^2(x) \geq \int_{(-\delta, \delta)} |R(x,y)|^2 \frac{dy}{\pi}  - C \int_{|y|>\delta} \frac{1}{|y+q_j(x)|^2} dy
\]
and 
\[
\int_{|y|>\delta} \frac{1}{|y+q_j(x)|^2} dy \leq C \frac{1}{(\delta - |q_j(x)|)^2}
\]
which is simply bounded above for $x$ small since $q(0)=0$.
Therefore, 
\[
\begin{aligned}
I_j(t,2)^2(x) + O(1) &\leq \int_{(-\delta, \delta)} |R(x,y)|^2 \frac{dy}{\pi} \\
&\leq \int_{(-\delta,\delta)} |R(x,y)| \frac{dy}{\pi} \sup_{|y|<\delta} |R(x,y)|.
\end{aligned}
\]
If we use our $\p=\infty$ bound above, then we get the $\p=1$ bound below:
\[
I_j(t,2)^2(x) + O(1) \leq C x^{-2L_j} \int_{(-\delta,\delta)} |R(x,y)| \frac{dy}{\pi} 
\]
Since $I_j(t,2)^2(x) \approx x^{-2L_j}$ we see that
\[
\int_{(-\delta,\delta)}|R(x,y)| \frac{dy}{\pi}
\]
is bounded below by a positive constant for $x$ small enough.

If we use our $\p=1$ bound above, then we get the $\p=\infty$ bound below:
\[
I_j(t,2)^2(x) + O(1) \leq C \log(1/|x|) \sup_{|y|<\delta} |R(x,y)| 
\]
and in this case we get
\[
c x^{-2L_j} (\log 1/|x|)^{-1} \leq \sup_{|y|<\delta} |R(x,y)|.
\]
 \end{proof}

\begin{prop} \label{lpnorm}
Assume the setup of Theorem \ref{lpchar}.
Set for $j=1,\dots, M$
\[
r_j = 2L_j - \p \sum_{i=1}^{M} O_{ij}.
\]
For $t\in \R$ proper and sufficiently small $x$ we have that
for $Q \in \C\{x\}[y]$ with $\deg Q_y < M$ and $\p>1$
\[
\| Q(x,\cdot)/P(x,\cdot) \|_{L^{\p}(\R)}
\approx
\left(\sum_{j=1}^{M} |Q(x,a_j(x;t))|^{\p} |x|^{r_j}\right)^{1/\p}.
\]

For $t\in \R$ proper, $\delta >0$, and 
sufficiently small $x$ we have that
for $Q \in \C\{x\}[y]$ with $\deg Q_y < M-1$ and $\p=1$
\[
\|Q(x,\cdot)/P(x,\cdot)\|_{L^1(-\delta,\delta)} \leq C \log(1/|x|) \sum_{j=1}^{M} |Q(x,a_j(x;t))| |x|^{r_j}
\]
and
\[
c (\log(1/|x|)^{-1} \sum_{j=1}^{M} |Q(x,a_j(x;t))| |x|^{r_j} \leq \|Q(x,\cdot)/P(x,\cdot)\|_{L^1(\R)}
\]
\end{prop}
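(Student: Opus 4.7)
The plan is to apply the one-variable $L^\p$-$\ell^\p$ estimates of Proposition \ref{onevarlp} (for $\p>1$) and Proposition \ref{onevarlone} (for $\p=1$) to the polynomial $y \mapsto P(x,y)$ with $x$ held fixed as a parameter, and then to convert the resulting bounds into asymptotics in $x$ using Lemma \ref{Iorder} together with Corollary \ref{Bvanish}. Properness of $t$ is essential: it is exactly the hypothesis that lets Corollary \ref{Bvanish} compute the orders of vanishing of $B$ and $A_y + tB_y$ along the branches $a_j(x;t)$ in terms of the combinatorial data $O_{ij}$.

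For $\p > 1$, Proposition \ref{onevarlp} applied to $y \mapsto P(x,y)$ (which has no zeros in $\overline{\uhp}$ for $x \in \R\setminus\{0\}$) yields two-sided comparisons between $\|Q(x,\cdot)/P(x,\cdot)\|_{L^\p(\R)}$ and
\[
\left( \sum_{j=1}^M \frac{|Q(x,a_j(x;t))|^\p}{|A_y(x,a_j(x;t)) + tB_y(x,a_j(x;t))|^\p}\, I_j(t,\p)(x)^\p \right)^{1/\p}.
\]
By Corollary \ref{Bvanish},
\[
\text{Ord}(A_y(x,a_j(x;t)) + tB_y(x,a_j(x;t))) = \sum_{i \ne j} O_{ij}, \qquad \text{Ord}(B(x,a_j(x;t))) = 2L_j + \sum_{i\ne j} O_{ij},
\]
and by Lemma \ref{Iorder}, $I_j(t,\p)(x) \approx |x|^{-2L_j(\p-1)/\p}$. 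Combining these, the coefficient multiplying $|Q(x,a_j(x;t))|^\p$ in the upper bound is comparable to $|x|^{r_j}$, since $r_j = 2L_j - \p(2L_j + \sum_{i\ne j} O_{ij}) = -2L_j(\p-1) - \p\sum_{i\ne j} O_{ij}$.

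For the matching lower bound one must verify that the auxiliary factor
\[
\frac{|B(x,a_j(x;t))|^\p}{|A_y+tB_y|^\p}\, I_j(t,\p)(x)^\p\, I_j(t,\p')(x)^\p
\]
is uniformly bounded as $x \to 0$. Using $I_j(t,\p')(x) \approx |x|^{-2L_j/\p}$, the exponent of $|x|$ works out to
\[
\p\bigl(2L_j + \sum_{i\ne j} O_{ij}\bigr) - \p\sum_{i\ne j} O_{ij} - 2L_j(\p-1) - 2L_j = 0,
\]
so this factor is indeed bounded, and the claimed two-sided estimate follows.

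The case $\p = 1$ is handled identically using Proposition \ref{onevarlone}. The uniform lower bound $I_j^\delta(t) \geq c$ from Lemma \ref{Iorder} extracts the exponent $r_j = -\sum_{i\ne j} O_{ij}$ from $1/|A_y+tB_y|$, while the upper bound $I_j^\delta(t) \leq C\log(1/|x|)$ yields the logarithmic factor in the upper $L^1$ estimate. For the reverse direction, each term $|B(a_j)/(A_y+tB_y)|\, I_j(t,\infty)\, I_j^\delta$ has order $|x|^{2L_j} \cdot |x|^{-2L_j} \cdot \log(1/|x|) = \log(1/|x|)$, which is the source of the $(\log(1/|x|))^{-1}$ loss in the claimed lower bound. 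The main obstacle throughout is the careful bookkeeping of exponents: the clean cancellations producing the two-sided estimates depend on $t$ being proper, which is what ensures $B$ vanishes at $a_j(x;t)$ to the predicted order $2L_j + \sum_{i\ne j} O_{ij}$ rather than possibly higher.
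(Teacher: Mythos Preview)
Your proposal is correct and follows essentially the same approach as the paper: you apply the one-variable estimates of Propositions~\ref{onevarlp} and~\ref{onevarlone} with $x$ as a parameter, then substitute the asymptotics from Corollary~\ref{Bvanish} and Lemma~\ref{Iorder}, verifying in particular that the auxiliary factor in the lower bound has total exponent zero. The paper's proof is essentially identical, carrying out the same exponent bookkeeping.
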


\begin{proof}
We simply insert the estimates
\[
|A_y(x,a_j(x;t)) + t B_y(x,a_j(x;t))| \approx |x|^{\sum_{i:i\ne j} O_{ij}}
\]
\[
|B(x,a_j(x;t))| \approx |x|^{\sum_{i=1}^{M}O_{ij}}
\]
\[
I_j(t,\p) \approx |x|^{-2L_j(1-1/\p)}
\]
from Corollary \ref{Bvanish} and Lemma \ref{Iorder}
into Proposition \ref{onevarlp}.
Indeed,
\begin{equation} \label{complicated}
\frac{|B(x,a_j(x;t))|^{\p}}{|A_y(x,a_j(x;t)) + t B_y(x,a_j(x;t))|^{\p}} I_j(t,\p)^{\p} I_j(t,\p')^{\p} \approx |x|^{r}
\end{equation}
for 
\[
r = \p(\sum_{i=1}^{M} O_{ij} - \sum_{i: i\ne j} O_{ij} -2L_j(1-1/\p) -2L_j/\p) = 0
\]
since $O_{jj} = 2L_j$.
In other words, for proper $t$ and $x$ small, \eqref{complicated}
is bounded above and below by constants.
The other $Q$ independent quantities in Proposition \ref{onevarlp}
can be calculated similarly.

For $\p=1$, using Proposition \ref{onevarlone},
Corollary \ref{Bvanish}, and Lemma \ref{Iorder} 
\[
\|Q(x,\cdot)/P(x,\cdot)\|_{L^1(-\delta,\delta)} 
\leq C \log(1/|x|) \sum_{j=1}^{M} |Q(x,a_j(x;t))| |x|^{-\sum_{i:i\ne j} O_{ij}}
\]
and similarly
\[
c \sum_{j=1}^{M} |Q(x,a_j(x;t))| |x|^{-\sum_{i:i\ne j} O_{ij}}
\leq \|Q(x,\cdot)/P(x,\cdot)\|_{L^{1}(\R)} C \log(1/|x|)
\]
since
\[
\left|\frac{B(x,a_j(x;t))}{A_y(x,a_j(x;t)) + t B_y(x,a_j(x;t))}\right|
\approx |x|^{2L_j}
\]
and $I_j(t,\infty) \leq C |x|^{-2L_j}$.   Note that
$r_j = -\sum_{i:i\ne j} O_{ij}$ for $\p=1$.
\end{proof}

\begin{proof}[Proof of Theorem \ref{lpchar}]
The proof of Theorem \ref{lpchar} then follows the same
lines as Theorem \ref{l2char}.
For $\p>1$, we reduce as before to 
$Q \in \C\{x,y\}$ with $\deg_y Q < M$
and then by Proposition \ref{lpnorm},
$Q/P \in L^{\p}_{loc}$
if and only if for $j=1,\dots, M$
\[
\p \text{Ord}(Q(x,a_j(x;t)) + 2L_j - \p(\sum_{i=1}^{M} O_{ij}) > -1.
\]
Equivalently, 
\[
\text{Ord}(Q(x,a_j(x;t))) > \sum_{i=1}^{M} O_{ij} - \frac{2L_j+1}{\p}.
\]
An integer $k$ is greater than a real number $a$ if and only if $k \geq \lfloor a+1\rfloor$, so we
 can rewrite as 
\begin{equation} \label{rewrite}
\text{Ord}(Q(x,a_j(x;t))) \geq \sum_{i=1}^{M} O_{ij} - \left\lceil \frac{2L_j+1}{\p} \right\rceil +1.
\end{equation}

For $\p=1$, Proposition \ref{lpnorm} does
not apply unless $\deg_y Q < M-1$.  
If $\deg_y Q = M-1$ we can consider
\[
Q_1(x,y) = Q(x,y) - LT_Q(x) \frac{A(x,y)+tB(x,y)}{y-a_1(x;t)}
\]
where $LT_Q(x) \in \C\{x\}$ is the coefficient of $y^{M-1}$ in $Q(x,y)$.
Since $A+tB$ is monic in $y$, $(A+tB)/(y-a_1(x;t))$ is also monic in $y$
and therefore $\deg_y Q_1 < M-1$.
Applying Proposition \ref{lpnorm} to $Q_1$ we see that
$Q_1/P \in L^{1}_{loc}$ if and only if for $j=1,\dots, M$
\[
\text{Ord}(Q_1(x,a_j(x;t))) +2L_j - \sum_{i=1}^{M} O_{ij} = \text{Ord}(Q_1(x,a_j(x;t))) - \sum_{i:i\ne j} O_{ij}> -1.
\]
Here we use the fact that $\log(1/|x|) x^{s}$ and $\frac{x^s}{\log(1/|x|)}$ are
integrable around $0$ if and only if $s>-1$.
This is equivalent to
\[
\text{Ord}(Q_1(x,a_j(x;t))) \geq \sum_{i:i\ne j} O_{ij}.
\]

Now, notice that
\[
Q_1(x,a_1(x;t)) = Q(x,a_1(x;t)) - LT_Q(x)(A_y(x,a_1(x;t))+t B_y(x,a_1(x;t)))
\]
and since $\text{Ord}(A_y(x,a_1(x;t))+t B_y(x,a_1(x;t))) = \sum_{i:i\ne 1} O_{1i}$
we see that 
\[
\text{Ord}(Q_1(x,a_1(x;t))) \geq \sum_{i:i\ne 1} O_{i1}
\]
 if and only if
\[
\text{Ord}(Q(x,a_1(x;t))) \geq \sum_{i:i\ne 1} O_{i1}.
\]  
Also, for $j>1$, $Q_1(x,a_j(x;t)) = Q(x,a_j(x;t))$ so these evidently
have the same order of vanishing.
Finally, notice that $\frac{A+tB}{(y-a_1(x;t))P} \in L^1_{loc}$ by Lemma \ref{Iorder}
and therefore $Q/P \in L^1_{loc}$ if and only if $Q_1/P \in L^{1}_{loc}$
and therefore our necessary and sufficient condition for $Q/P\in L^{1}_{loc}$
is as claimed.
\end{proof}

Using Theorem \ref{lpchar} we can reprove the characterization of $\mcI^{\infty}_{P}$ 
 proven in \cite{BKPS} and Koll\'ar \cite{kollar}
 that we stated in the introduction:
  \Iinf*
 
 The implication 
 \[
 \prod_{j=1}^{M}(y+q_j(x), x^{2L_j}) \subset \mcI^{\infty}_{P}
 \]
 was proven in \cite{BKPS} while the opposite inclusion 
 is harder.  Some special cases were resolved
 in \cite{BKPS} but the full result was proven in Koll\'ar \cite{kollar};
 this is the part that we reprove here.  
 We can prove the following stronger claim.
 
 \begin{theorem} \label{Kmax} 
 Assume the setup of Theorem \ref{lpdim}.
 For $K =\max\{2L_1,\dots, 2L_M\}$
 we have 
 \[
 \mcI^{K+1}_{P} \subset \prod_{j=1}^{M}(y+q_j(x), x^{2L_j})
 \]
 and therefore
 \[
 \prod_{j=1}^{M}(y+q_j(x), x^{2L_j}) = \mcI^{K+1}_{P} = \mcI^{\infty}_{P}.
 \]
 \end{theorem}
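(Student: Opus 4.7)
The plan is to combine Theorem \ref{lpchar} at $\p = K+1$ with the parametrized representation formula underlying Theorem \ref{parseval} to exhibit any $Q \in \mcI_P^{K+1}$ as a sum of terms that manifestly lie in the product ideal. Since $K = \max_j 2L_j$ and $L_j \geq 1$, for every $j$ we have $0 < (2L_j+1)/(K+1) \leq 1$, so $\lceil (2L_j+1)/(K+1) \rceil = 1$. Consequently Theorem \ref{lpchar} simplifies: $Q \in \mcI_P^{K+1}$ if and only if, for any fixed proper $t \in \R$ and every $j$,
\[
\text{Ord}(Q(x,a_j(x;t))) \geq \sum_{i=1}^M O_{ij} = 2L_j + \sum_{i \ne j} O_{ij}.
\]

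The next step is a reduction. Because each factor $y + q_j(x) + ix^{2L_j}$ of $[P]$ lies in the ideal $(y+q_j(x), x^{2L_j})$, the polynomial $[P]$ itself belongs to $\prod_j (y+q_j(x), x^{2L_j})$. I would use Weierstrass division by $[P]$ to write $Q = Q_0 + Q_1 [P]$ with $\deg_y Q_0 < M$; since $[P]/P$ is bounded near the origin, $Q_1[P] \in \mcI_P^{K+1}$, hence $Q_0 \in \mcI_P^{K+1}$, and it suffices to prove $Q_0$ lies in the product ideal. Fixing a proper $t$, I then apply the two-variable version of Theorem \ref{parseval} to express
\[
Q_0(x,y) = \sum_{j=1}^M c_j(x) F_j(x,y;t), \quad c_j(x) = \frac{Q_0(x,a_j(x;t))}{(A_y + tB_y)(x,a_j(x;t))}, \quad F_j(x,y;t) = \frac{A(x,y) + tB(x,y)}{y - a_j(x;t)}.
\]
Combining the vanishing condition above with the second equality of Corollary \ref{Bvanish}, namely $\text{Ord}((A_y + tB_y)(x, a_j(x;t))) = \sum_{i \ne j} O_{ij}$, yields $\text{Ord}(c_j) \geq 2L_j$, so $c_j(x) = x^{2L_j} \tilde{c}_j(x)$ for some $\tilde{c}_j \in \C\{x\}$.

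Finally, Theorem \ref{matchbranches} gives the factorization $F_j(x,y;t) = \prod_{k \ne j} (y + q_k(x) + x^{2L_k}\psi_k(x;t))$ in which each factor lies in $(y + q_k(x), x^{2L_k})$; combined with $x^{2L_j} \in (y+q_j(x), x^{2L_j})$, this places $c_j F_j \in \prod_{k=1}^M (y+q_k(x), x^{2L_k})$, so $Q_0 = \sum_j c_j F_j$ lies in the product ideal. This establishes $\mcI_P^{K+1} \subset \prod_j (y+q_j(x), x^{2L_j})$, and together with the easy reverse inclusion $\prod_j(y+q_j(x), x^{2L_j}) \subset \mcI_P^{\infty} \subset \mcI_P^{K+1}$ (the first inclusion coming from the already-known direction of Theorem 1.2 of \cite{BKPS}, the second because $L^\infty_{loc} \subset L^{K+1}_{loc}$), gives equality throughout. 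The only delicate point is checking that $c_j(x)$ is genuinely analytic rather than merely meromorphic, but this is immediate from the order inequality $\text{Ord}(Q_0(x,a_j(x;t))) \geq \text{Ord}((A_y+tB_y)(x,a_j(x;t)))$ that we just derived; everything else is either bookkeeping or a direct application of results already in place.
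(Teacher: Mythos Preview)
Your proof is correct and follows essentially the same route as the paper's: reduce to $\deg_y Q < M$, observe that $\lceil (2L_j+1)/(K+1)\rceil = 1$ so Theorem \ref{lpchar} yields $\text{Ord}(Q(x,a_j(x;t)))\geq 2L_j + \sum_{i\ne j}O_{ij}$, then use the parametrized representation \eqref{Qform} together with Corollary \ref{Bvanish} to see each coefficient $c_j$ vanishes to order $\geq 2L_j$, and conclude via the factorization of $(A+tB)/(y-a_j(x;t))$ from Theorem \ref{matchbranches}. Your explicit remark on the analyticity of $c_j$ and your choice to divide by $[P]$ (which visibly lies in the product ideal) rather than $A+tB$ are minor cosmetic variations, not substantive differences.
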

 
\begin{proof}
As in the proof of Theorem \ref{lpchar},
we can reduce to $Q \in \C\{x,y\}$ with $\deg_y Q < M$.
For $\p = K+1$, we have that 
\[
\lceil (2L_j+1)/\p \rceil = 1 
\]
and referring to \eqref{rewrite}
if $Q/P \in L^{K+1}_{loc}$ then by Theorem \ref{lpchar} 
\begin{equation} \label{OrdQx}
\text{Ord}(Q(x,a_j(x;t))) \geq  \sum_{k=1}^{M} O_{kj} = 2L_j + \sum_{k:k\ne j} O_{kj}.
\end{equation}
Restating the formula \eqref{Qform}
\[
Q(x,y) = \sum_{j=1}^{M} \frac{Q(x,a_j(x;t))}{A_y(x,a_j(x;t)) + t B_y(x,a_j(x;t))} \frac{A(x,y)+tB(x,y)}{y- a_j(x;t)}
\]
we see that the expressions
\[
\frac{Q(x,a_j(x;t))}{A_y(x,a_j(x;t)) + t B_y(x,a_j(x;t))} 
\]
vanish to order at least $2L_j$ for $t$ proper by Corollary \ref{Bvanish} and \eqref{OrdQx}.
At the same time we have the formula
\[
 \frac{A(x,y)+tB(x,y)}{y- a_j(x;t)} = \prod_{k:k\ne j} (y+ q_k(x)+ x^{2L_k} \psi_k(x;t))
 \]
 which belongs to the ideal $\prod_{k: k\ne j}(y+q_k(x), x^{2L_k})$
 so that
 \[
 x^{2L_j}\frac{A(x,y)+tB(x,y)}{y- a_j(x;t)}
 \]
 belongs to the product ideal $\prod_{j=1}^{M} (y+q_j(x), x^{2L_j})$.
 This proves $Q$ belongs to the
 product ideal. 
 \end{proof}

\subsection{Final integrability characterization---Theorem \ref{finalint}}

Theorems \ref{l2char} and \ref{lpchar} both make use of
the order of vanishing of a polynomial or power series $Q(x,y)$ along a
branch $y = a_j(x;t)$ of $A+tB$.  Since these branches are somewhat
inaccessible it would be preferable to calculate a condition
only depending on the data $2L_1,\dots, 2L_M$ and $q_1(x),\dots, q_M(x)$
from $P$.  We are able to do just that in this section.

Recall the following definition.

\Onq*

We need the following simple fact.

\begin{lemma} \label{lemorddatum}
Let $Q(x,y) \in \C\{x\}[y]$ be nonzero.  Let $q(x) \in \R[x]$ with $q(0)=0$ and $L\geq 1$.
Let $r = O(2L,q, Q)$ and define
\[
g(x,y) = Q(x, -q(x) - x^{2L} y).
\]
There exists a finite set of real numbers $S$
such that for any $\psi\in \C\{x\}$ with $\psi(0) \notin S$
\[
g(x,\psi(x))
\]
vanishes to order $r$.
\end{lemma}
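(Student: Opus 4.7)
The plan is to reduce the lemma to a simple observation about where a single polynomial in one variable vanishes. By definition of $r = O(2L,q,Q)$, we can write
\[
Q(x, x^{2L}\tilde y - q(x)) = x^r h(x,\tilde y),
\]
where $h(x,\tilde y) \in \C\{x\}[\tilde y]$ and where $x$ does \emph{not} divide $h$. Viewing $h$ as a power series in $x$ with polynomial-in-$\tilde y$ coefficients, this indivisibility is exactly the statement that $h(0,\tilde y)$ is a nonzero polynomial in $\tilde y$.

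Next I would relate $g$ to $h$. Substituting $\tilde y = -y$ gives
\[
g(x,y) = Q(x,-q(x) - x^{2L} y) = x^r h(x,-y),
\]
so for any $\psi \in \C\{x\}$,
\[
g(x,\psi(x)) = x^r\, h(x,-\psi(x)).
\]
The order of vanishing of the right-hand side is at least $r$, and equals exactly $r$ precisely when $h(0,-\psi(0)) \neq 0$.

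Since $\tilde y \mapsto h(0,\tilde y)$ is a nonzero polynomial, its zero set in $\C$ is finite. The main (and essentially only) step is then to take $S$ to be the intersection of this zero set with $\R$, i.e.
\[
S = \{c \in \R : h(0,-c) = 0\},
\]
which is a finite set of real numbers. Whenever $\psi(0) \notin S$ (and $\psi(0) \in \R$), we have $h(0,-\psi(0)) \neq 0$, so $g(x,\psi(x))$ vanishes to order exactly $r$, as desired. The only minor subtlety is the verification that $r$ is genuinely the order (not just a lower bound), which is exactly the nontriviality of $h(0,\tilde y)$ guaranteed by the maximality in the definition of $O(2L,q,Q)$; there is no serious obstacle beyond this bookkeeping.
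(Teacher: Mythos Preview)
Your proof is correct and follows essentially the same approach as the paper: both arguments expand $g(x,y)$ in powers of $x$, identify the leading coefficient as a nonzero polynomial in $y$ (your $h(0,-y)$ is the paper's $g_r(y)$), and take $S$ to be its zero set. The only cosmetic difference is that you factor out $x^r$ explicitly while the paper writes the series $g(x,y)=\sum_{k\ge r} g_k(y)x^k$.
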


\begin{proof}
Note that $Q$ is identically zero if and only if $g$ is identically zero.
We can write $g$'s power series
\[
g(x,y) = \sum_{k \geq r} g_k(y) x^k
\]
for some $r \geq 0$ and $g_k(y) \in \C[y]$, where $g_r(y) \not\equiv 0$.
Then, $g(x,\psi(x))$ vanishes to order $r$ as long
as $g_r(\psi(0)) \ne 0$.  So, $S$ simply consists of the zeros of $g_r(y)$.
\end{proof}

We are finally in a place to prove Theorem \ref{finalint},
which characterizes integrability
in its most palatable form.
We will no longer need to refer to ``generic $t$
values'' and will obtain an essentially computable
condition.  Indeed, 
if we start with $P$ with no zeros in $\uhp^2$,
the data $2L_1,\dots, 2L_M$ and $q_1,\dots, q_M$
from from Theorem \ref{brackthm} can be
computed using Newton's method of rotating
rulers for computing Puiseux series expansions
as indicated by Koll\'ar \cite{kollar}.  Then,
the following theorem presents easily
computable conditions for a polynomial
or power series $Q$ to satisfy $Q/P \in L^{\p}_{loc}$
in terms of order of vanishing on a datum $(2L_j,q_j)$.

Let us recall

\finalint*

Recall $O_{ij}$ is given by Definition \ref{Oij}.

\begin{proof}
We consider the setup and conclusion of Theorem 
\ref{lpchar}.  In this setup, the branches $y-a_j(x;t)$
of $A+tB$ are arranged to be of the form 
\[
y+ q_j(x) + x^{2L_j} \psi_j(x;t)
\]
corresponding to the branches of $P$.

Given $Q\in \C\{x,y\}$, by the Weierstrass division
theorem we can write
\[
Q(x,y) = Q_0(x,y) + P(x,y) Q_1(x,y)
\]
where $Q_0 \in \C\{x\}[y]$, $\deg_yQ_0 < \deg_y P$,
and $Q_1(x,y) \in \C\{x,y\}$.
Since $P(x,y)$ satisfies $P/P=1 \in L^{\p}_{loc}$
for every $\p$, the order of vanishing conditions
on $P(x,y)Q_1(x,y)$ vacuously hold.  Therefore,
we can replace $Q$ with $Q_0$ and assume
$Q$ is a polynomial in $y$.

The order of vanishing of
\[
Q(x,-(q_j(x) +x^{2L_j} \psi_j(x;t)))
\]
equals $O(2L_j, q_j, Q)$
 as long as
$\psi_j(0;t)$ avoids a finite set as stated by 
Lemma \ref{lemorddatum}.
By Proposition \ref{distval}, for a fixed 
datum $(2L_j,q_j)$ the values of
$\psi_k(0;t)$ are all distinct as $t$ varies 
and as $k$ varies over indices that
correspond to a specific datum $(2L,q)$.
The upshot is that for all but finitely many $t \in \R$
the order of vanishing of
\[
Q(x,-(q_j(x) +x^{2L_j} \psi_j(x;t)))
\]
equals $O(2L_j, q_j, Q)$.
Using Theorem \ref{lpchar}, Theorem \ref{finalint} 
follows.
\end{proof}

\section{Derivative integrability} \label{Dersec}

To prove derivative integrability, we need a simple observation.

\begin{prop} \label{dersimp}
Let $Q(x,y) \in \C\{x,y\}$, $q(x) \in \C[x]$, $n \in \mathbb{N}$.
Recall Definition \ref{Onq}.
Then,
\[
n + O\left(n,q,\frac{\partial Q}{\partial y}\right) \geq O(n,q, Q).
\]
\end{prop}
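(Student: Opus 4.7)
The plan is to reduce this to a trivial power series observation by composing with the chain rule. Set $g(x,y) = Q(x, x^n y - q(x))$, so that by definition $O(n,q,Q)$ is precisely $\mathrm{ord}_x g$, the largest power of $x$ dividing $g$ when $g$ is viewed as an element of $\C\{x,y\}$.

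Next I would differentiate $g$ in $y$ using the chain rule, which gives
\[
\frac{\partial g}{\partial y}(x,y) = x^n \cdot \frac{\partial Q}{\partial y}\bigl(x, x^n y - q(x)\bigr).
\]
By Definition \ref{Onq} applied to $\partial Q/\partial y$, the $x$-order of $\frac{\partial Q}{\partial y}(x, x^n y - q(x))$ is exactly $O(n,q,\partial Q/\partial y)$, and hence the $x$-order of $\partial g/\partial y$ equals $n + O(n,q,\partial Q/\partial y)$. So the desired inequality $n + O(n,q,\partial Q/\partial y) \geq O(n,q,Q)$ is precisely the statement that
\[
\mathrm{ord}_x\!\left(\frac{\partial g}{\partial y}\right) \geq \mathrm{ord}_x(g).
\]

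The last inequality is the entire content of the lemma, and it is immediate from writing $g(x,y) = \sum_{j,k \geq 0} a_{jk}\, x^j y^k$: the derivative $\partial g/\partial y = \sum_{j,k} a_{j,k+1}(k+1) x^j y^k$ only retains monomials coming from terms of $g$ with $k \geq 1$, and every such term has $x$-exponent at least $\mathrm{ord}_x(g)$. Terms of $g$ with $k=0$ (which could in principle realize the minimum $x$-exponent of $g$) are killed by differentiation, which is why the inequality can be strict but never goes the other way.

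There is essentially no obstacle here: the only thing to be careful about is bookkeeping the factor $x^n$ produced by the chain rule, since this is exactly the gap between $O(n,q,\partial Q/\partial y)$ and the $x$-order of $\partial g/\partial y$. Once that factor is properly accounted for, the claim is a one-line fact about power series in two variables.
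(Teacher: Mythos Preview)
Your proof is correct and essentially identical to the paper's: both introduce $g(x,y)=Q(x,x^ny-q(x))$, use the chain rule to produce the factor $x^n$, and then observe that differentiating a power series in $y$ cannot lower its $x$-order. The only cosmetic difference is that the paper expands $g=\sum_{k\geq N} g_k(y)x^k$ by powers of $x$ while you expand in both variables, but the content is the same.
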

\begin{proof}
We write
\[
G(x,y) = Q(x, x^n y - q(x)) = \sum_{k\geq N} g_k(y) x^k
\]
where $N = O(n,q,Q)$ and $g_N(y) \not\equiv 0$.
Then, 
\[
\frac{\partial G}{\partial y} = x^n \frac{\partial Q}{\partial y}(x,x^n y - q(x)) = \sum_{k\geq N} g_k'(y) x^k.
\]
If $g_N'(y) \not\equiv 0$, then $n + O(n,q,\frac{\partial Q}{\partial y}) = N$
otherwise $n + O(n,q,\frac{\partial Q}{\partial y}) > N$.
\end{proof} 

It is not so clear that when applying derivatives
we can simply replace a $P$ with no zeros
in $\uhp^2$ (or $\R\times \uhp$) with its local model $[P]$ from 
Theorem \ref{brackthm}.  Instead we need
to use Theorem \ref{Pfactored} to directly
check the following:

\begin{prop} \label{Pder}
Assume the setup and notation of Theorem \ref{Pfactored}.
Let $K = \max\{2L_1,\dots, 2L_M\}$.
Then, $\frac{\partial P}{\partial y}/ P \in L^{\p}_{loc}$ if and only if $\Im( \frac{\partial P}{\partial y}/ P) \in L^{\p}_{loc}$
if and only if $\p<1 + 1/K$.
\end{prop}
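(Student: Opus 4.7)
The plan is to use logarithmic differentiation to reduce the problem to the $L^\p_{loc}$ behavior of individual Puiseux factors, and then to exploit a rigidity of signs of the imaginary parts to rule out cancellation in the sum. Writing $z_j(x,y) := y + q_j(x) + x^{2L_j}\psi_j(x^{1/k})$, Theorem \ref{facebranches} gives
\[
\frac{P_y}{P} = \frac{u_y}{u} + \sum_{j=1}^{M} \frac{1}{z_j(x,y)}.
\]
Since $u$ is a unit at $(0,0)$, the term $u_y/u$ is bounded near the origin and contributes nothing to local integrability, so I would discard it. The crucial observation is that $\Im z_j = x^{2L_j}\,\Im\psi_j(x^{1/k})$; because $\Im \psi_j(0)>0$ and $\psi_j$ is continuous, for $x$ in a one-sided neighborhood of $0$ (the case $x<0$ is handled identically upon fixing a branch of the $k$-th root) all the $\Im z_j$ have a common sign, hence all the $\Im(1/z_j) = -\Im z_j/|z_j|^2$ have a common sign. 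Consequently
\[
\Bigl|\Im \sum_{j=1}^M \tfrac{1}{z_j}\Bigr| \;=\; \sum_{j=1}^M |\Im(1/z_j)| \;\geq\; |\Im(1/z_{j_0})|
\]
for every index $j_0$, and this is what breaks the potential cancellation that could otherwise obscure the lower bound on $\p$.

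Next I would carry out the single-branch computation. For a factor $z = y + q(x) + x^{2L}\psi(x^{1/k})$ with $\Im\psi(0)>0$, translating $y\mapsto y-q(x)$ and then rescaling $y = x^{2L} v$ yields
\[
\int_{|y|<\epsilon} \frac{dy}{|z|^\p} \;\asymp\; x^{2L(1-\p)} \int_{|v|<\epsilon/x^{2L}} \frac{dv}{|v+\psi(x^{1/k})|^\p},
\]
and a similar identity with $|\Im(1/z)|^\p$ in place of $|1/z|^\p$. For $\p>1$ the inner integral is bounded in $x$ (since $\Im\psi(0)>0$ keeps $v+\psi$ away from zero), and for $\p=1$ it grows only like $\log(1/|x|)$; for $\p>1/2$ in the imaginary-part version it is bounded likewise because $\int (v^2+1)^{-\p}\,dv$ converges. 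Integrating in $x$ then shows that both $1/z$ and $|\Im(1/z)|$ lie in $L^\p_{loc}$ if and only if $\p < 1 + 1/(2L)$.

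Now assemble the two directions. For the upper bound $\p < 1+1/K$: since $2L_j \leq K$, we have $\p < 1+1/(2L_j)$ for every $j$, so each $1/z_j\in L^\p_{loc}$, and the $L^\p$ triangle inequality gives $\sum_j 1/z_j \in L^\p_{loc}$ and therefore $P_y/P\in L^\p_{loc}$. For the lower bound $\p\geq 1+1/K$: pick any $j_0$ with $2L_{j_0} = K$ and use the sign-alignment inequality above to bound $|\Im(P_y/P)|$ below by $|\Im(1/z_{j_0})|$, which fails to be locally $L^\p$ by the single-branch calculation; hence $\Im(P_y/P)\notin L^\p_{loc}$. The three-way equivalence is then closed by the trivial pointwise inequality $|\Im W|\leq |W|$, which forces $\Im(P_y/P)\in L^\p_{loc}$ whenever $P_y/P\in L^\p_{loc}$.

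The main obstacle I anticipate is the single-branch $L^\p$ computation, more specifically verifying that the inner integral $\int dv/|v+\psi(x^{1/k})|^\p$ is bounded uniformly in $x$ and that truncation of the $y$-range to $|y|<\epsilon$ does not disturb the $x$-power asymptotic once we integrate; this uses the strict positivity of $\Im\psi(0)$ in an essential way. A smaller subtlety is making the ``common sign of $\Im\psi_j(x^{1/k})$'' step fully rigorous in the presence of Puiseux exponents $1/k$, which I would resolve by working separately on $x>0$ and $x<0$ after fixing a branch, since integrability is a local property invariant under this splitting.
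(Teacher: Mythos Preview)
Your proposal is correct and shares the paper's overarching strategy: use the logarithmic derivative to write $P_y/P$ as $u_y/u$ plus the sum $\sum_j 1/z_j$, then exploit that all $\Im(1/z_j)$ have a common sign (since $\Im\psi_j(0)>0$) to prevent cancellation and reduce sharpness to a single-branch question. The difference is in how the single-branch integrability is established. You do it by hand via the translation $y\mapsto y-q(x)$ and rescaling $y=x^{2L}v$, arriving at $x^{2L(1-\p)}$ times a uniformly bounded integral in $v$; the paper instead invokes its main characterization Theorem~\ref{finalint} (applied to the single factor $y+q_j(x)+ix^{2L_j}$, using that $|z_j|\approx |y+q_j(x)+ix^{2L_j}|$), and for the upper bound uses Proposition~\ref{dersimp} together with Theorem~\ref{finalint} rather than the triangle inequality on individual terms. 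Your route is more elementary and self-contained, avoiding the machinery of the paper; the paper's route illustrates that Theorem~\ref{finalint} subsumes such computations once it is in hand. Your flagged subtleties (uniform boundedness of the inner $v$-integral, and handling $x<0$ with the Puiseux branch) are real but minor, and the paper treats them with the same level of informality.
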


\begin{proof}
Referring to Theorem \ref{Pfactored} 
and using Proposition \ref{dersimp} we have 
\[
2L_j + O(2L_j, q_j, P_y) \geq O(2L_j, q_j, P)
\]
and since $P/P \in L^{\infty}_{loc}$
\[
O(2L_j, q_j, P) \geq \sum_{i=1}^{M} O_{ij}.
\]
Thus, 
\[
O(2L_j, q_j, P_y) \geq \sum_{i:i\ne j} O_{ij}
\]
implies by Theorem \ref{finalint} that
$P_y/P \in L^{\p}_{loc}$ if $2L_j< (2L_j+1)/\p $ for all $j=1,\dots, M$.
Thus, $P_y/P \in L^{\p}_{loc}$ for $\p < 1+1/K$.  
To prove this is sharp we need to dig a little deeper.

The $y$-logarithmic derivative of $P$ satisfies
\[
-\frac{P_y}{P} =- \frac{u_y}{u} + -\sum_{j=1}^{M} \frac{1}{y+q_j(x) + x^{2L_j} \psi_j(x^{1/k})}
\]
The imaginary part is 
\[
-\Im \frac{u_y}{u} + \sum_{j=1}^{M} \frac{x^{2L_j}\Im \psi_j(x^{1/k})}{|y+q_j(x) + x^{2L_j} \psi_j(x^{1/k})|^2}.
\]
Since $u$ is analytic and non-vanishing at $0$, we can disregard the term 
$-\Im\frac{u_y}{u}$.  
Since $\Im \psi_j(0) > 0$, 
we have 
\[
|y+q_j(x) + x^{2L_j} \psi_j(x^{1/k})| \approx |y+q_j(x) + i x^{2L_j}| 
\]
(i.e. the two sides are comparable in terms of constants for small $x$).
Thus, $\Im(P_y/P) \in L^{\p}_{loc}$ if and only if
\[
\sum_{j=1}^{M} \frac{x^{2L_j}}{|y+q_j(x) + i x^{2L_j}|^2} \in L^{\p}_{loc}.
\]
The term $\frac{x^{L_j}}{y+q_j(x) + i x^{2L_j}}$ 
belongs to $L^{2\p}_{loc}$
if and only if 
\[
L_j  \geq 2L_j - \left\lceil \frac{2L_j+1}{2\p}\right\rceil +1.
\]
This is an application of Theorem \ref{finalint} to the case of 
$y+q_j(x) + ix^{2L_j}$.
This inequality is equivalent to
\[
(2L_j +1)/(2\p) > L_j \text{ or simply } 1 + 1/(2L_j) > \p
\]
and thus $\Im(P_y/P) \in L^{\p}_{loc}$ if and only if
\[
1 + 1/K > \p
\]
for $K = \max\{2L_1,\dots, 2L_M\}$.
\end{proof} 

\begin{theorem} \label{Derthm}
Assume the setup and notation of Theorem \ref{Pfactored}.
Let $K = \max\{2L_1,\dots, 2L_M\}$.
If $Q(x,y) \in \C\{x,y\}$ and $Q/P \in L^{\infty}_{loc}$ then
\[
\frac{\partial}{\partial y} \frac{Q}{P} \in L^{\p}_{loc}
\]
for $\p < 1+1/K$.
\end{theorem}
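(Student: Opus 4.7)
The plan is to apply the quotient rule
\[
\frac{\partial}{\partial y}\left(\frac{Q}{P}\right) = \frac{Q_y}{P} - \frac{Q}{P}\cdot\frac{P_y}{P}
\]
and show that both summands lie in $L^\p_{loc}$ for every $\p<1+1/K$. The second summand is essentially immediate: $Q/P$ is bounded near $(0,0)$ by hypothesis, while Proposition \ref{Pder} gives $P_y/P\in L^\p_{loc}$ exactly for $\p<1+1/K$, so the product inherits membership in $L^\p_{loc}$ throughout the target range. The work is therefore concentrated in the first summand $Q_y/P$.

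For that summand I would combine the derivative estimate of Proposition \ref{dersimp} with the sharp integrability criterion of Theorem \ref{finalint}, working with the local model $[P]=\prod_{j=1}^M(y+q_j(x)+ix^{2L_j})$ (so that $Q/P\in L^\p_{loc}$ is equivalent to $Q/[P]\in L^\p_{loc}$). Since $Q/P\in L^\infty_{loc}$, Theorem \ref{finalint} taken in the limit $\p\to\infty$ (where $\lceil(2L_j+1)/\p\rceil-1=0$) yields
\[
O(2L_j,q_j,Q)\geq \sum_{i=1}^M O_{ij}
\]
for each $j$. Proposition \ref{dersimp} with $n=2L_j$ and $q=q_j$ then gives
\[
O(2L_j,q_j,Q_y)\geq O(2L_j,q_j,Q)-2L_j \geq \sum_{i=1}^M O_{ij}-2L_j=\sum_{i\neq j} O_{ij},
\]
using $O_{jj}=2L_j$.

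To finish, I would reapply Theorem \ref{finalint}, this time to $Q_y$: the criterion for $Q_y/P\in L^\p_{loc}$ is
\[
O(2L_j,q_j,Q_y)\geq \sum_{i=1}^M O_{ij}-\left\lceil\frac{2L_j+1}{\p}\right\rceil+1,
\]
which, in view of the bound just obtained, is implied by $\lceil(2L_j+1)/\p\rceil\geq 2L_j+1$. This ceiling inequality is equivalent to $(2L_j+1)/\p>2L_j$, i.e.\ $\p<1+1/(2L_j)$; since $2L_j\leq K$ for every $j$, the uniform condition $\p<1+1/K$ ensures it for all $j$ simultaneously. I do not anticipate a serious obstacle: after invoking the already-developed characterizations, the argument reduces to routine bookkeeping with the $O(n,q,\cdot)$ invariants. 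The only subtleties are the harmless replacement of $P$ by its local model $[P]$ (where Theorem \ref{finalint} literally applies) and the elementary ceiling-inequality step, which explains exactly why the threshold $1+1/K$ emerges from the branch with the largest $L_j$.
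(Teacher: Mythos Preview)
Your proposal is correct and follows essentially the same approach as the paper: split via the quotient rule, handle $(Q/P)\cdot(P_y/P)$ using boundedness of $Q/P$ together with Proposition~\ref{Pder}, and handle $Q_y/P$ by combining the $L^\infty$ case of Theorem~\ref{finalint} with Proposition~\ref{dersimp} and then reapplying Theorem~\ref{finalint}. The only cosmetic point is that ``taking $\p\to\infty$'' is better phrased as ``taking any $\p\geq 2L_j+1$'' (so the ceiling equals $1$), which is exactly how the paper justifies it.
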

\begin{proof}
By Theorem \ref{finalint}
\[
O(2L_j, q_j, Q) \geq \sum_{i=1}^{M} O_{ij}
\]
and by Proposition \ref{dersimp} 
\[
O(2L_j, q_j, Q_y) \geq \sum_{i:i\ne j} O_{ij}
\]
and therefore $Q_y/P \in L^{\p}_{loc}$ for $\p < 1+1/K$.
Since
\[
\frac{\partial}{\partial y} \frac{Q}{P} = \frac{Q_y}{P} - \frac{Q}{P} \frac{P_y}{P}
\]
Proposition \ref{Pder} yields the theorem.
\end{proof}

Setting $Q = \refl{P}$ yields part of a theorem from \cites{BPS18, BPS20}, that
the rational inner function $f = \refl{P}/P$ satisfies
$\frac{\partial f}{\partial y} \in L^{\p}_{loc}$ if and
only if $\p< 1+1/K$.  The ``only if'' now follows from Proposition \ref{Pder} since
\[
\frac{\partial}{\partial y} \frac{\refl{P}}{P} = \frac{\refl{P}}{P}\left( \frac{\refl{P}_y}{\refl{P}} - \frac{P_y}{P}\right) = 
(-2i)\frac{\refl{P}}{P} \Im \frac{P_y}{P}.
\]

\section{A basis for $\C\{x,y\}/(P,\bar{P})$} \label{sec:basis}

In this section we construct a concrete basis for the quotient $\mathcal{Q} := \C\{x,y\}/(P,\bar{P})$
by copying arguments from Fulton's \emph{Algebraic Curves} \cite{Fulton} Chapter 3.
For Proposition \ref{basis} below we do not need any
assumptions about properness of $t\in \R$.

\begin{restatable}{prop}{basis} \label{basis}
Assume $P(x,y) \in \C[x,y]$ 
has no zeros in $\R\times \uhp$ and
no factors in common with $\refl{P}$.
Let $A = (1/2)(P+\refl{P})$, $B = (1/(2i))(P-\refl{P})$.
For $j=1,\dots, M$, let $y-a_j(x;t)=0$ be the $M$ smooth
branches of $A+tB$.  
Let $m_j(t) = \text{Ord}(B(x,a_j(x,t)))$.
A basis for the quotient $\mathcal{Q} = \C\{x,y\}/(P,\bar{P})$ is given as follows.
Set
\[
F_k(x,y) : = \frac{A+tB}{\prod_{j=1}^{k} (y-a_j(x;t))}.
\]
(We suppress the dependence of $F_k$ on $t$ since
we do not need it.)
Then, the elements
\[
x^i F_k(x,y) \text{ where } 1\leq k\leq M,\ 0\leq i < m_k(t)
\]
are representatives for a basis of $\mathcal{Q}$.
\end{restatable}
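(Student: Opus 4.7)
My plan is to reduce the computation to the local factorization of $A+tB$ and then run an inductive short exact sequence argument modeled on Fulton's treatment of intersection multiplicities. First, from $P = A + iB$ and $\bar P = A - iB$ one has $(P,\bar P) = (A,B) = (A + tB, B)$ for any $t \in \R$. The proposition provides $M$ smooth branches $y = a_k(x;t)$ of $A+tB$ through $(0,0)$, so Weierstrass preparation yields a local factorization $A+tB = u(x,y) \cdot G_M(x,y)$ in $\C\{x,y\}$ with $G_M := \prod_{k=1}^M(y - a_k(x;t))$ and $u$ a unit. Absorbing $u$ into the ideal gives $\mathcal{Q} \cong R_M := \C\{x,y\}/(G_M, B)$, which is what we will analyze.

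Second, for $0 \leq n \leq M$ I set $G_n := \prod_{k=1}^n(y - a_k(x;t))$ and $R_n := \C\{x,y\}/(G_n, B)$ (so $G_0 = 1$ and $R_0 = 0$). For each $n \geq 1$, the key ingredient is the short exact sequence
\begin{equation*}
0 \longrightarrow R_{n-1} \xrightarrow{\;\cdot\, (y - a_n)\;} R_n \longrightarrow \C\{x,y\}/(y - a_n,\, B) \longrightarrow 0.
\end{equation*}
The delicate point -- and the main obstacle in the proof -- is injectivity of the left map, which demands that $(y - a_n)$ and $B$ share no common factor in $\C\{x,y\}$. Equivalently one needs $B(x, a_n(x;t)) \not\equiv 0$, i.e., $m_n(t) < \infty$; this holds because the hypothesis that $P$ and $\bar P$ share no common factor translates via $P = A + iB$ to $A$ and $B$ being coprime, hence $A+tB$ and $B$ are coprime, and therefore each branch factor of $A+tB$ is coprime with $B$. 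Exactness at $R_n$ is then a routine ideal manipulation, and the rightmost term simplifies via $y \mapsto a_n(x;t)$ to $\C\{x\}/(B(x,a_n(x;t))) = \C\{x\}/(x^{m_n})$, which has the evident basis $\{1, x, \dots, x^{m_n-1}\}$ of dimension $m_n$.

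Third, I would induct on $n$. Writing $\tilde F_k^{(n)} := G_n/G_k$, if $R_{n-1}$ admits the basis $\{x^i \tilde F_k^{(n-1)} : 1 \leq k \leq n-1,\ 0 \leq i < m_k\}$, then multiplication by $(y-a_n)$ sends it to $\{x^i \tilde F_k^{(n)} : 1 \leq k \leq n-1,\ 0 \leq i < m_k\}$ inside $R_n$, using $(y-a_n)(G_{n-1}/G_k) = G_n/G_k$; and lifting the basis $\{1,x,\dots,x^{m_n-1}\}$ of $\C\{x\}/(x^{m_n})$ contributes the additional vectors $\{x^i \tilde F_n^{(n)} : 0 \leq i < m_n\}$ (noting $\tilde F_n^{(n)} = 1$). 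Splicing bases along the exact sequence shows the union is a basis of $R_n$. Taking $n = M$ yields a basis $\{x^i \tilde F_k : 1 \leq k \leq M,\ 0 \leq i < m_k\}$ of $R_M$, where $\tilde F_k := G_M/G_k$. Finally, since $F_k = (A+tB)/G_k = u \tilde F_k$ differs from $\tilde F_k$ by the unit $u$, and multiplication by $u$ is a $\C$-linear automorphism of the finite-dimensional ring $\mathcal{Q}$ (as $u$ is invertible in $\C\{x,y\}$), the set $\{x^i F_k\} = \{u\cdot x^i \tilde F_k\}$ is also a basis, yielding the proposition.
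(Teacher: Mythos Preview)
Your proof is correct and follows essentially the same short exact sequence argument (\`a la Fulton) as the paper. The only cosmetic difference is the direction of the iteration: the paper peels off one branch factor at a time from $A+tB$, using multiplication by the remaining product $F_k$ as the injective map in $0 \to R/(y-a_k,B) \to R/(F_{k-1},B) \to R/(F_k,B) \to 0$, whereas you build up $G_n$ from $G_0=1$ via multiplication by the single new factor $(y-a_n)$; both sequences assemble the identical basis.
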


Thus, a given element of $Q(x,y) \in \mathcal{Q}$ has a unique representative
\[
Q(x,y) = \sum_{k=1}^{M} c_k(x) F_k(x,y)
\]
where $\deg c_k(x) < m_k(t)$, and the dimension
of $\mathcal{Q}$ is given by $\sum_{j=1}^{M} m_j(t)$.

\begin{example}
Consider $P = (y+ix^2)(y+ix^4)$ (a small modification of Example \ref{exex}),
and corresponding
$A = (y-x^3)(y+x^3), B = x^2(1+x^2)y$.
The branches of $A$ are $y=\pm x^3$ and
\[
\text{Ord}(B(x,\pm x^3)) = 5
\]
so the dimension of $\mathcal{Q}$ is 10.
If we use a proper value of $t$, then
$A+tB$ has branches of the form $y + O(x^2) = 0, y + O(x^4)=0$.
In this case
\[
\text{Ord}(B(x, O(x^2))) = 4, \text{Ord}(B(x,O(x^4))) = 6
\]
and the sum is again $10$ as expected. $\diamond$
\end{example}

Proposition \ref{basis} is covering
well-trodden territory and so we only aim to give a highly specialized
proof for the benefit of the reader unfamiliar with this background.
First, let $R := \C\{x,y\}$ and for $G,H \in R$ we let $(G,H)$ denote
the ideal generated by $G,H$.

\begin{lemma} \label{basislemma}
Suppose $a(x) \in \C\{x\}$, $G_1(x,y) \in R$.
Let $G_0(x,y) = (y-a(x))G_1(x,y)$
and assume $H(x,y) \in R$.
Assume $G_0$ and $H$ have no common factors in $R$.
Assume (for induction purposes later)
that the quotient $R/(G_1,H)$ is finite dimensional.
Let 
\[
M_{G_1} : R/(y-a(x), H) \to R/(G_0, H)
\]
 denote
multiplication by $G_1$ and let
$\iota : R/(G_0, H) \to R/(G_1, H)$ denote
natural inclusion.
Then, 
\[
0 \to R/(y-a(x), H) \overset{M_{G_1}}{\to} R/(G_0, H) \overset{\iota}{\to} R/(G_1,H) \to 0
\]
is a short exact sequence.
A basis for $R/(y-a(x), H)$ consists of $x^i$ for $0\leq i < \text{Ord}(H(x,a(x)))$.
A basis for $R/(G_0,H)$
is given by
\[
x^iG_1(x,y) \text{ for } 0\leq i < \text{Ord}(H(x,a(x)))
\]
combined with representatives for a basis of $R/(G_1,H)$ pulled back to $R/(G_0,H)$.
In particular,
\[
\dim R/(G_0,H) = \text{Ord}(H(x,a(x))) + \dim R/(G_1,H).
\]
\end{lemma}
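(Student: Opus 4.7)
The plan is to first verify the short exact sequence, then use it to transfer bases through the ends.

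\medskip

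\noindent\textbf{The exact sequence.} The map $\iota$ is well-defined because $G_0 = (y-a(x))G_1 \in (G_1)$, so $(G_0, H) \subset (G_1, H)$; it is clearly surjective. The map $M_{G_1}$ is well-defined since if $f = \alpha(y-a(x)) + \beta H$ then $fG_1 = \alpha G_0 + \beta G_1 H \in (G_0, H)$. Exactness at the middle: if $\iota(f) = 0$, then $f = g G_1 + h H$ for some $g,h \in R$, and modulo $(G_0,H)$ we have $f \equiv gG_1 = M_{G_1}(g)$. So $\ker \iota \subset \mathrm{im}\, M_{G_1}$; the reverse inclusion is immediate since $G_1 \cdot (y-a(x)) = G_0$ and $G_1 \cdot H = G_1 H$ both lie in $(G_0,H)$ after passing to $R/(G_1,H)$ trivially. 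The hard point is injectivity of $M_{G_1}$: if $gG_1 \in (G_0,H)$, write $gG_1 = \alpha(y-a(x))G_1 + \beta H$, so that $(g - \alpha(y-a(x)))G_1 = \beta H$. Since $G_0$ and $H$ share no factors in the UFD $\C\{x,y\}$, neither do $G_1$ and $H$, so $G_1 \mid \beta$; writing $\beta = G_1 \beta'$ gives $g = \alpha(y-a(x)) + \beta' H \in (y-a(x), H)$, as desired.

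\medskip

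\noindent\textbf{Basis of $R/(y-a(x), H)$.} The evaluation homomorphism $R \to \C\{x\}$, $f \mapsto f(x,a(x))$, is surjective with kernel $(y - a(x))$, so $R/(y-a(x), H) \cong \C\{x\}/(H(x,a(x)))$. Note $H(x,a(x))$ is not identically zero: otherwise $y-a(x)$ would divide $H$, giving a common factor of $G_0$ and $H$. Writing $H(x,a(x)) = x^n u(x)$ with $u$ a unit and $n = \text{Ord}(H(x,a(x)))$, the quotient is $\C\{x\}/(x^n)$, with basis $1, x, \ldots, x^{n-1}$. Pulling back, $\{x^i : 0 \le i < n\}$ is a basis for $R/(y-a(x), H)$.

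\medskip

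\noindent\textbf{Basis of $R/(G_0,H)$ and dimension.} Since we work with vector spaces, the short exact sequence splits, so a basis of the middle term can be obtained by concatenating the image (under $M_{G_1}$) of a basis of the left and any lift of a basis of the right. Pushing forward the basis just described through $M_{G_1}$ gives the elements $x^i G_1(x,y)$ for $0 \le i < \text{Ord}(H(x,a(x)))$; together with any set of representatives in $R$ of a chosen basis of $R/(G_1,H)$, projected to $R/(G_0,H)$, these form a basis for $R/(G_0,H)$. (The hypothesis that $R/(G_1,H)$ is finite dimensional guarantees such a basis exists.) Taking dimensions gives
\[
\dim R/(G_0,H) = \text{Ord}(H(x,a(x))) + \dim R/(G_1,H).
\]
The main technical point in the entire argument is the UFD-based injectivity of $M_{G_1}$; everything else is a standard unwinding of definitions or linear algebra applied to the short exact sequence.
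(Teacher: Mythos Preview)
Your proof is correct and follows essentially the same route as the paper: verify the short exact sequence (with the UFD argument for injectivity of $M_{G_1}$), identify $R/(y-a(x),H)\cong \C\{x\}/(x^n)$, and read off the basis via splitting. Your use of the evaluation isomorphism $R/(y-a(x))\cong\C\{x\}$ is slightly slicker than the paper's direct spanning/independence check, but the content is the same; the one awkward sentence is your justification of $\mathrm{im}\,M_{G_1}\subset\ker\iota$, where it suffices to say $gG_1\in(G_1,H)$.
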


\begin{proof}
The multiplication map is well-defined
because elements of the ideal $(y-a(x),H)$, namely
\[
f_1(x,y)(y-a(x)) + f_2(x,y) H(x,y) \text{ for } f_1,f_2 \in R,
\]
 map
 to 
 \[
 f_1(x,y) G_0(x,y) + f_2(x,y)G_1(x,y)H(x,y)
 \]
 which belongs to the ideal $(G_0, H)$ in the target.
 The multiplication map is injective because
 if $f_0(x,y) \in R$ maps to the kernel, namely
 \[
 f_0(x,y)G_1(x,y) = g_1(x,y) G_0(x,y) + g_2(x,y) H(x,y)
 \]
 for some $g_1,g_2 \in R$
and then 
 \[
 (f_0(x,y)-(y-a(x))g_1(x,y))G_1(x,y) = g_2(x,y) H(x,y)
 \]
Therefore $G_1$ divides $g_2$ since $G_0$ and $H$ have
no common factors.
Writing $g_2 = G_1 g_3$ we see that
\[
f_0(x,y) = (y-a(x))g_1(x,y) + g_3(x,y) H(x,y)
\]
showing $f_0$ belongs to the ideal $(y-a(x), H)$,
i.e. equals $0$ in the quotient.

Next, the natural inclusion $\iota$
is well-defined because the ideals satisfy $(G_0,H) \subset (G_1,H)$.
This inclusion is surjective essentially because it is well-defined;
we can take any $f_0 \in R$ viewed as a representative of a
coset of $R/(G_0,H)$ and map it to its representative within $R/(G_1,H)$.

Finally, to show the range of $M_{G_1}$ equals the kernel of $\iota$,
it is clear that the range is contained in the kernel
as a multiple of $G_1$ belongs to the ideal $(G_1,H)$.
Given an element $f_1(x,y) G_0(x,y) + f_2(x,y) H(x,y)$
of the kernel of $\iota$, we have
that $M_{G_1}([f_1(x,y)(y-a(x))])$ maps to it
because
\[
G_1(x,y)(f_1(x,y)(y-a(x))) - (f_1(x,y) G_0(x,y) + f_2(x,y) H(x,y)) 
= -f_2(x,y) H(x,y) \in (G_0,H).
\]
This proves the given sequence is exact.

Now, the quotient $R/(y-a(x), H)$ has basis given by 
(representatives) of $x^i$ for $0\leq i< \text{Ord}(H(x,a(x)))$.
To see this, note that any $f(x,y) \in \C\{x,y\}$
can be written as $f(x,y) = f_0(x) + (y-a(x)) f_1(x,y)$
for $f_0(x) \in \C\{x\}$ and $f_1(x,y) \in R$.
(Simply consider that $f(x,y+a(x))$ can be written as
a term with only $x$'s and a multiple of $y$.) 
Note that $f_0(x) = f(x,a(x))$.
In particular, 
\begin{equation} \label{Hax}
H(x,y) = H(x,a(x)) + (y-a(x)) h(x,y)
\end{equation}
for some $h \in R$.
If $k = \text{Ord}(H(x,a(x)))$, we can write $H(x,a(x)) = x^k/h_0(x)$
where $h_0(0)\ne 0$.
Writing $f_0(x) = f_{00}(x) + x^k f_2(x)$ for $f_{00}(x) \in \C[x]$ 
with degree less than $k$,
we see that in $R/(y-a(x), H)$
\[
f(x,y) \equiv f_0(x) \equiv f_{00}(x) + h_0(x) H(x,a(x)) \equiv f_{00}(x).
\]
This shows $\{x^i: 0\leq i < k\}$ spans $R/(y-a(x), H)$.
The set is linearly independent because
if a polynomial $g(x) \in \C[x]$ of degree less than $k$
belongs to $(y-a(x), H)$ then 
\[
g(x) = g_1(x,y) (y-a(x)) + g_2(x,y) H(x,y).
\]
Setting $y=a(x)$ gives $g(x) = g_2(x,y)H(x,a(x))$
which vanishes to order higher than the degree of $g$.

Let $\{f_1(x,y), \dots , f_m(x,y)\}$ be
representatives for a basis for $R/(G_1,H)$.
Then, since the sequence in the lemma is exact
the following is a basis for $R/(G_0,H)$
\[
\{ x^iG_1(x,y): 0\leq i < k\} \cup \{f_1(x,y),\dots, f_m(x,y)\}.
\]

\end{proof}

Lemma \ref{basislemma} can be iterated to prove Proposition \ref{basis}.

\begin{proof}[Proof of Proposition \ref{basis}]
Note that the ideals $(A,B) = (A+tB,B)$ and $(P,\refl{P}) = (A+iB, A-iB)$
are equal since $A,B$ and $P,\refl{P}$ are linear combinations of each other.
By Theorem \ref{facerealbranches} we can factor
\[
A+tB = u(x,y;t)\prod_{j=1}^{M}(y-a_j(x;t))
\]
where for fixed $t$, $u(x,y;t) \in \C\{x,y\}$
is a unit; i.e. $u(0,0)\ne 0$.  
For our purposes we can divide out $u(x,y;t)$
and assume $A+tB = \prod_{j=1}^{M}(y-a_j(x;t))$
since the unit will not affect any basis properties.

If we set $F_0(x,y) = A+tB = (y-a_1(x;t))F_1(x,y)$, 
then we can apply Lemma \ref{basislemma}
to see that a basis for $R/(F_0,B)$
is given by
\[
x^i F_1(x,y) \text{ for } 0\leq i < \text{Ord}B(x,a_1(x;t)) = m_1(t)
\]
combined with a basis for $R/(F_1,B)$.
Since $F_1 = (y-a_2(x;t)) F_2$ we can iterate
to see that a basis for $R/(F_1,B)$
is given by
\[
x^i F_2(x,y) \text{ for } 0 \leq i < m_2(t).
\]
combined with a basis for $R/(F_2,B)$.
We can continue in this way all the way up to 
$R/(F_{M-1}, B)$
which has basis
\[
x^i F_M(x,y) \text{ for } 0 \leq i < m_M(t).
\]
Note here though that $F_M \equiv 1$.
\end{proof}

Now if we take $t \in \R$ to be proper as
in Theorem \ref{matchbranches} and order
the branches of $A+tB$ as in that theorem, Corollary \ref{Bvanish}
computes the quantities $m_j(t)$ in Proposition \ref{basis}
to be 
\[
m_j(t) = \sum_{i=1}^{M} O_{ij} = 2L_j + \sum_{i:i\ne j} O_{ij}.
\]
We have recovered the fact that the dimension of $\C\{x,y\}/(P,\refl{P})$
is given by
\[
\sum_{j=1}^{M} 2L_j + 2 \sum_{(i,j): i<j} O_{ij}
\]
which we stated in Section \ref{sec:locface}.

\section{Characterization of integrability quotients}

To achieve the main goals of the paper we need to combine
the explicit basis of Section \ref{sec:basis} and the parametrized 
orthogonal decomposition obtained from Theorem \ref{parseval}.

\begin{restatable}{theorem}{uberthm} \label{uberthm}
Let 
\[
P(x,y) = \prod_{j=1}^{M} (y+q_j(x)+ i x^{2L_j})
\]
where $L_1,\dots, L_M \in \mathbb{N}$ and $q_1,\dots, q_M \in \R[x]$ with 
$\deg q_j < 2L_j$ and $q_j(0)=0$.
Let $A = (1/2)(P+\refl{P}), B= (1/(2i))(P-\refl{P})$.
We can factor $A+tB = \prod_{j}(y-a_j(x;t))$
and for proper $t$ we can order so that $-a_j(x;t) = q_j(x) + x^{2L_j}\psi_j(x;t)$.
Define $F_k(x,y)$ as in Proposition \ref{basis} for $k=1,\dots, M$.

There exists a relabeling of the branches 
of $P$ so that the following holds.
Given $Q(x,y) \in \C\{x,y\}/(P,\refl{P})$ written in terms of
the basis given in Proposition \ref{basis}
\[
Q(x,y) = \sum_{k=1}^{M} c_k(x) F_k(x,y)
\]
where $c_k(x) \in \C[x]$ with $\deg c_k < 2L_k + \sum_{j: j\ne k} O_{jk}$,
we have that $Q/P \in L^{\p}_{loc}$
 if and only if for each $k=1,\dots,M$,
 $c_k(x)$ vanishes to order at least
 \[
 \lfloor (2L_k+1)(1-1/\p)\rfloor + \sum_{j: j<k} O_{jk}.
 \]
 \end{restatable}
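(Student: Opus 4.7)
My plan is to combine the basis from Proposition \ref{basis} with the pointwise integrability criterion of Theorem \ref{lpchar}. Writing $Q = \sum_{k=1}^M c_k(x) F_k(x,y)$ with $\deg c_k < m_k := 2L_k + \sum_{j\neq k} O_{jk}$, and using $F_k(x,y) = \prod_{i>k}(y-a_i(x;t))$, evaluation along the branch $y = a_j(x;t)$ annihilates $F_k$ for $j>k$ and yields the triangular identity
\[
Q(x,a_j(x;t)) = \sum_{k=j}^M c_k(x)\, G_{k,j}(x), \qquad G_{k,j}(x) := \prod_{i>k}(a_j-a_i)(x;t),
\]
with $\text{Ord}(G_{k,j}) = \sum_{i>k} O_{ij}$ by Theorem \ref{matchbranches} and Corollary \ref{Bvanish}. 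The integer identity $\lfloor n(1-1/\p)\rfloor + \lceil n/\p\rceil = n$ applied to $n = 2L_k+1$ produces the crucial balance $\alpha_k + \text{Ord}(G_{k,k}) = \beta_k$, where $\beta_k := \sum_i O_{ik} + 1 - \lceil(2L_k+1)/\p\rceil$ is precisely the vanishing threshold on $Q(x,a_k(x;t))$ from Theorem \ref{lpchar}. Hence the condition $\text{Ord}(c_k) \geq \alpha_k$ is exactly what is required to make the $k$-th term of the triangular sum contribute at least $\beta_j$ at $j=k$.

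A dimension count then reduces the theorem to a single containment. The subspace $V := \{\sum c_k F_k : \text{Ord}(c_k) \geq \alpha_k \text{ for all } k\}$ of $\mathcal{Q} = \C\{x,y\}/(P,\refl{P})$ has dimension $\sum_k (m_k - \alpha_k)$, which simplifies to $\sum_{j<k} O_{jk} + \sum_k (\lceil(2L_k+1)/\p\rceil - 1)$; by Theorem \ref{lpdim} this equals $\dim(\mcI^\p_P/(P,\refl{P}))$. It therefore suffices to prove $V \subseteq \mcI^\p_P/(P,\refl{P})$, since the matching dimensions then force equality.

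Establishing this containment via Theorem \ref{lpchar} requires $\text{Ord}(Q(x,a_j(x;t))) \geq \beta_j$ for every $Q \in V$ and every $j$. The $k=j$ term delivers exactly $\beta_j$, and the remaining cross terms $c_k(x) G_{k,j}(x)$ with $k>j$ force the inequality
\[
\alpha_k - \alpha_j \geq \sum_{j < i \leq k} O_{ij} \qquad (j<k).
\]
This inequality generally fails for an arbitrary labeling of the branches, which is exactly why the theorem asserts the existence of a relabeling. My plan is to exploit the ultrametric property $O_{j\ell} \geq \min(O_{jk}, O_{k\ell})$ — which follows from the minimum defining $O_{ij}$ together with the ultrametric inequality on orders of vanishing of $q_j - q_\ell$ — and to order the branches by a carefully chosen traversal of the rooted tree encoded by $(O_{ij})$. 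The main technical obstacle, visible already on small examples with a balanced two-cluster structure, is that naive depth-first orderings do not always suffice; one must instead interleave branches across subtrees so that the telescoping sum on the right is majorized, step by step, by the corresponding increment of $\alpha_k$, with the term $\lfloor(2L_k+1)(1-1/\p)\rfloor$ providing the slack needed to absorb variation in the individual $2L$ values.
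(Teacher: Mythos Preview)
Your setup is correct: the triangular evaluation $Q(x,a_j)=\sum_{k\ge j}c_k G_{k,j}$, the identity $\alpha_k+\mathrm{Ord}\,G_{k,k}=\beta_k$, and the reduction to the inequality $\alpha_k-\alpha_j\ge\sum_{j<i\le k}O_{ij}$ for $j<k$ are all right and match the paper. Two substantive gaps remain.

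\textbf{Circular use of Theorem \ref{lpdim}.} In the paper, Theorem \ref{lpdim} is deduced \emph{from} Theorem \ref{uberthm} by counting free coefficients in the $c_k$; it has no independent proof. Your dimension argument therefore assumes what you are proving. The fix is simple and does not require the dimension formula: once the ordering inequality $\alpha_k-\alpha_j\ge\sum_{j<i\le k}O_{ij}$ is in hand, it yields both inclusions. The inclusion $V\subseteq\mcI_P^\p/(P,\refl{P})$ is the one you sketched. For the reverse, argue by downward induction on $j$ from $M$ using Proposition \ref{ccond}: at level $j$, the terms with $k>j$ in the ccond sum already have order $\ge L_j(\p)$ by the ordering inequality together with the previously established bounds $\mathrm{Ord}(c_k)\ge\alpha_k$, so the $k=j$ term must also have order $\ge L_j(\p)$, giving $\mathrm{Ord}(c_j)\ge\alpha_j$. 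This is exactly the paper's backward induction, and it makes the appeal to Theorem \ref{lpdim} unnecessary.

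\textbf{The ordering is not constructed.} Your discussion of the ultrametric tree and ``interleaving across subtrees'' is a plan, not a proof, and the ultrametric structure is in fact not needed. The paper's construction is a one-line greedy rule: working from $M$ down, assign label $m$ to the as-yet-unlabeled branch that maximizes $L_k(\p)+\sum_{j\ \text{unlabeled},\ j\ne k}O_{kj}$. By maximality, after labeling this reads $L_m(\p)+\sum_{j<m}O_{mj}\ge L_k(\p)+\sum_{j\le m,\ j\ne k}O_{kj}$ for every $k\le m$, which is precisely $\alpha_m-\alpha_k\ge\sum_{k<i\le m}O_{ik}$. Taking $m=k$ and any $j<k$ gives your inequality. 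Replace the tree heuristic with this greedy argument and the proof is complete.
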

 
 Before we get into the proof, let us detail an application.  
Theorem \ref{lpdim} from the introduction, which we restate here, is a consequence.

\lpdim*

\begin{proof}[Proof of Theorem \ref{lpdim}]
We simply count the number of 
free coefficients in the $c_k(x)$ polynomials in Theorem
\ref{uberthm}.
The dimension is
\[
\begin{aligned}
&\sum_{k=1}^{M} (2L_k + \sum_{j:j\ne k} O_{jk} - \lfloor (2L_k+1)(1-1/\p)\rfloor - \sum_{j:j<k} O_{jk})\\
&=
\sum_{k=1}^{M} (2L_k- \lfloor (2L_k+1)(1-1/\p)\rfloor)
+ \sum_{(j,k): j>k} O_{jk}\\
&=
\sum_{(j,k):j<k} O_{jk} 
+ \sum_{k=1}^{M} \left(\left\lceil \frac{2L_k + 1}{\p}\right\rceil -1\right).
\end{aligned}
\]

The formula for the dimension of $\mcI^{\infty}_P/(P,\refl{P})$ follows from Theorem \ref{Kmax}.
\end{proof}

To start proving Theorem \ref{uberthm} 
we obtain a technical characterization of integrability
in terms of the $c_k$ polynomials.

\begin{prop} \label{ccond}
Assume the setup of Theorem \ref{uberthm}.
Assume $t\in \R$ is proper.
Then, $Q/P \in L^{\p}_{loc}$ if and only if
for every $j=1,\dots, M$
\[
\text{Ord}\left(\sum_{k=j}^{M} c_k(x) \frac{1}{\prod_{\overset{i\ne j}{i\leq k}} (a_j(x;t)-a_i(x;t))}\right) 
\geq 
\lfloor (2L_j+1)(1-1/\p)\rfloor.
\]
\end{prop}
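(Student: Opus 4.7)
The plan is to reduce Proposition \ref{ccond} to Theorem \ref{lpchar} by explicitly evaluating $Q(x, a_j(x;t))$ in terms of the basis coefficients $c_k(x)$ supplied by Proposition \ref{basis}.

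First, I would unpack the definition $F_k(x,y) = \prod_{i=k+1}^{M}(y-a_i(x;t))$. Evaluating at $y = a_j(x;t)$, the factor $y - a_j$ appears in $F_k$ precisely when $j > k$, so $F_k(x, a_j) = 0$ for $j > k$, while for $j \leq k$ one gets $F_k(x, a_j) = \prod_{i=k+1}^{M}(a_j - a_i)$. Consequently
\[
Q(x, a_j(x;t)) = \sum_{k=j}^{M} c_k(x) \prod_{i=k+1}^{M}(a_j(x;t) - a_i(x;t)).
\]
Writing $D_j(x) := \prod_{i\ne j}(a_j(x;t) - a_i(x;t))$, Corollary \ref{Bvanish} identifies $D_j$ with $(A_y + tB_y)(x, a_j(x;t))$ and gives $\text{Ord}(D_j) = \sum_{i\ne j} O_{ij}$. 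Since $t$ is proper, the branches $a_j(x;t)$ are pairwise distinct as analytic germs, so each factor $a_j - a_i$ (with $i \ne j$) is a nonzero analytic germ. For $j \leq k$ I may therefore rewrite $\prod_{i=k+1}^{M}(a_j - a_i) = D_j(x) \big/ \prod_{i \ne j,\, i \leq k}(a_j - a_i)$ and pull $D_j$ out of the sum to obtain $Q(x, a_j(x;t)) = D_j(x)\,S_j(x)$, where
\[
S_j(x) := \sum_{k=j}^{M} \frac{c_k(x)}{\prod_{i \ne j,\, i \leq k}(a_j(x;t) - a_i(x;t))}
\]
is exactly the meromorphic germ appearing in the proposition.

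Finally, I would feed this into Theorem \ref{lpchar}: the condition $Q/P \in L^{\p}_{loc}$ is equivalent to $\text{Ord}(Q(x, a_j(x;t))) \geq \sum_{i=1}^{M} O_{ij} - \lceil (2L_j+1)/\p \rceil + 1$ for each $j$. Since $\text{Ord}(Q(x, a_j)) = \text{Ord}(D_j) + \text{Ord}(S_j)$ as an identity of orders of (possibly meromorphic) germs, subtracting $\text{Ord}(D_j) = \sum_{i\ne j} O_{ij}$ yields
\[
\text{Ord}(S_j) \geq O_{jj} - \lceil (2L_j+1)/\p \rceil + 1 = 2L_j + 1 - \lceil (2L_j+1)/\p \rceil = \lfloor (2L_j+1)(1-1/\p)\rfloor,
\]
where the last equality uses $\lfloor -x\rfloor = -\lceil x\rceil$ applied to $x = (2L_j+1)/\p$. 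This is precisely the desired inequality. The only place real care is needed is the additive relation for orders of vanishing: $S_j$ is a priori only meromorphic, but the factorization $Q(x, a_j) = D_j S_j$ holds as germs and $D_j$ has a well-defined finite order, so the additivity is automatic; the lower bound obtained for $\text{Ord}(S_j)$ is nonnegative, which a posteriori forces $S_j$ to be analytic at $0$.
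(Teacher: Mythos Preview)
Your proof is correct and follows essentially the same approach as the paper: both compute $Q(x,a_j(x;t))$ by evaluating each $F_k$ at $y=a_j(x;t)$, factor out $D_j(x)=\prod_{i\ne j}(a_j-a_i)$ to isolate the sum $S_j(x)$, and then invoke Theorem \ref{lpchar} together with $\text{Ord}(D_j)=\sum_{i\ne j}O_{ij}$ from Corollary \ref{Bvanish}. The only cosmetic difference is that the paper first passes through the Lagrange-type representation \eqref{Fk} of $F_k$ from Theorem \ref{parseval} before evaluating, whereas you evaluate $F_k(x,a_j)=\prod_{i>k}(a_j-a_i)$ directly; the resulting formulas and the final arithmetic $(2L_j+1)-\lceil(2L_j+1)/\p\rceil=\lfloor(2L_j+1)(1-1/\p)\rfloor$ are identical.
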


The condition in Proposition \ref{ccond} is obviously 
complicated.  Notice that for $j=M$ the sum has
only one term and we get the condition
\[
\text{Ord}\left(\frac{c_M(x)}{\prod_{i<M} (a_j(x;t) - a_i(x;t))} \right) \geq 
\lfloor (2L_M+1)(1-1/\p)\rfloor
\]
which simplifies to 
\[
\text{Ord}(c_M(x)) \geq 
\lfloor (2L_M+1)(1-1/\p)\rfloor 
+ \sum_{i: i<M} O_{iM}.
\]
As $j$ goes down from $M$ the expressions have more and more
terms. We will see in the next two subsections how to extract a simpler 
condition. 

\begin{proof}[Proof of Proposition \ref{ccond}]
We first apply Theorem \ref{parseval} to the choice $Q = F_k$;
we need to calculate
\[
\frac{F_k(x,a_j(x;t))}{A_y(x,a_j(x;t)) + t B_y(x,a_j(x;t))}.
\]
Note that
\[
F_k(x,a_j(x;t)) = 
\begin{cases} 0 & \text{ for } j > k \\
\prod_{i>k} (a_j(x;t) - a_i(x;t)) & \text{ for } j \leq k
\end{cases}
\]
and 
\[
A_y(x,a_j(x;t)) + t B_y(x,a_j(x;t)) = \prod_{i:i\ne j} (a_j(x;t) - a_i(x;t))
\]
so that for $j \leq k$
\[
\frac{F_k(x,a_j(x;t))}{A_y(x,a_j(x;t)) + t B_y(x,a_j(x;t))} = \frac{1}{\prod_{i\leq k, i\ne j} (a_j(x;t) - a_i(x;t))}
\]
and we get $0$ for $j > k$.

Applying Theorem \ref{parseval} we have
\begin{equation}\label{Fk}
F_k(x,y) = \sum_{j=1}^{k} \frac{1}{\prod_{\overset{i\ne j}{i\leq k}} (a_j(x;t)-a_i(x;t))} \frac{A+tB}{y-a_j(x;t)}.
\end{equation}

Now let us write $Q(x,y) \in \C\{x,y\}/(P,\refl{P})$
as in the statement of Theorem \ref{uberthm} using $F_k$.
Using \eqref{Fk} we have
\[
Q(x,y) = \sum_{j=1}^{M} \left(\sum_{k=j}^{M} c_k(x) \frac{1}{\prod_{\overset{i\ne j}{i\leq k}} (a_j(x;t)-a_i(x;t))}\right) \frac{A+tB}{y-a_j(x;t)}
\]
and 
\[
Q(x,a_j(x;t)) =
 \left(\sum_{k=j}^{M} c_k(x) \frac{1}{\prod_{\overset{i\ne j}{i\leq k}} (a_j(x;t)-a_i(x;t))}\right) \prod_{i:i\ne j}(a_j(x;t) - a_i(x;t)) 
 \]
 
By Theorem \ref{lpchar}, $Q/P \in L^{\p}_{loc}$ if and only if
\[
\text{Ord}(Q(x,a_j(x;t)))  - \sum_{i=1}^{M} O_{i,j} \geq 
1- \left\lceil \frac{2L_j+1}{\p} \right\rceil
\]
while by Theorem \ref{matchbranches}, $\text{Ord}(a_j(x;t) - a_i(x;t)) = O_{ij}$ for $t$ proper.
Putting this together we get a condition on the $c_k$ that
\[
\text{Ord}\left(\sum_{k=j}^{M} c_k(x) \frac{1}{\prod_{\overset{i\ne j}{i\leq k}} (a_j(x;t)-a_i(x;t))}\right) 
\geq 
2L_j + 1- \left\lceil \frac{2L_j+1}{\p} \right\rceil
=
\lfloor (2L_j+1)(1-1/\p)\rfloor.
\]
\end{proof}

The condition on the $c_k(x)$ from Proposition \ref{ccond} 
can be simplified to the following.

\begin{prop} \label{simplec}
Assuming the setup and conclusion of Proposition \ref{ccond}, 
it is possible to relabel the indices on the branches of $P$
(hence relabel the indices of the $c_k$) so that
the condition on the polynomials $c_k(x)$ is equivalent to:
$c_k(x)$ vanishes at least to the order 
\[
L_k(\p) + \sum_{j: j<k} O_{kj}
\]
for $k=1,\dots, M$
where 
\begin{equation} \label{Ljp}
L_j(\p) = 
\lfloor (2L_j+1)(1-1/\p)\rfloor
\end{equation}
\end{prop}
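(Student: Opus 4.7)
My plan is to prove the equivalence in two directions, handling the ``sufficient'' direction by a direct estimate of each summand and the ``necessary'' direction by reverse induction on $j$, with a single key inequality (ensured by the relabeling) driving both arguments. Recall that the $j$-th condition from Proposition \ref{ccond} reads
\[
\text{Ord}\!\left(\sum_{k=j}^{M} c_k(x) \frac{1}{\prod_{\substack{i\ne j\\ i\le k}}(a_j(x;t)-a_i(x;t))}\right)\ \ge\ L_j(\p),
\]
and $\text{Ord}(a_j-a_i) = O_{ij}$ by Theorem \ref{matchbranches}. Hence the $k$-th summand has order $\text{Ord}(c_k) - \sum_{i\le k,\,i\ne j} O_{ij}$; the $k=j$ term has order $\text{Ord}(c_j)-\sum_{i<j}O_{ij}$, which is precisely the ``simplified'' inequality I want to extract.

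\textbf{Sufficient direction.} Suppose $\text{Ord}(c_k) \ge L_k(\p) + \sum_{i<k} O_{ik}$ for each $k$. I will verify each summand has order at least $L_j(\p)$, so that the whole sum does too. For $k=j$ this is immediate. For $k>j$, after using $O_{jk}=O_{kj}$ and noting that the index sets $\{i<k\}$ and $\{i\le k,\,i\ne j\}$ differ only by swapping $k\leftrightarrow j$, the required inequality reduces to
\begin{equation}\label{eq:keyineq}
L_k(\p) - L_j(\p) + \sum_{\substack{i<k\\ i\ne j}}\bigl(O_{ik}-O_{ij}\bigr) \ge 0\qquad (j<k).
\end{equation}
This is the inequality the relabeling must guarantee.

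\textbf{Necessary direction.} Assuming the conditions of Proposition \ref{ccond}, I proceed by reverse induction on $j$. For $j=M$ the sum has a single term, giving $\text{Ord}(c_M) \ge L_M(\p)+\sum_{i<M}O_{iM}$ directly. For the inductive step, the bounds already established on $c_{j+1},\dots,c_M$ together with \eqref{eq:keyineq} imply every $k>j$ summand has order $\ge L_j(\p)$; therefore the $k=j$ summand, whose order is $\text{Ord}(c_j)-\sum_{i<j}O_{ij}$, must also satisfy the same bound, which is exactly the claim for $c_j$.

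\textbf{Main obstacle: the relabeling.} The real work is choosing a labeling so that \eqref{eq:keyineq} holds. The key structural input is the ultrametric-like property
\[
O_{ij} \ge \min(O_{ik},O_{jk}),
\]
a consequence of the triangle inequality for orders of vanishing applied to $O_{ij}=\min\{\text{Ord}(q_j-q_i),2L_i,2L_j\}$; for any triple the two smallest values among $\{O_{ij},O_{ik},O_{jk}\}$ coincide. This endows $\{1,\dots,M\}$ with a rooted tree $\mathcal{T}$ obtained by recursively partitioning indices into maximal clusters with pairwise $O$-value above decreasing thresholds. I will take any depth-first traversal of $\mathcal{T}$ as the new labeling. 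Under such a labeling, for $j<k$ the indices $i<k$, $i\ne j$ for which $O_{ij}>O_{ik}$ (the ``bad'' terms, contributing positively to \eqref{eq:keyineq}) must lie in the subtree containing $j$ that gets separated from $k$; by the DFS property these indices all satisfy $i<j$ or $j<i<k$, and their $O_{ij}$ values are bounded by $2L_j$. Conversely, the ``good'' terms with $O_{ij}<O_{ik}$ have $O_{ik}$ growing toward $2L_k$. The bookkeeping then yields \eqref{eq:keyineq} using $O_{ij}\le 2\min(L_i,L_j)$ and the fact that $L_j(\p)$ is monotone in $L_j$. I expect this combinatorial step---comparing the two cumulative sums along the tree path from $j$ to $k$---to be the most delicate part of the argument, requiring a sub-induction on the depth of the least common ancestor of $j$ and $k$ in $\mathcal{T}$.
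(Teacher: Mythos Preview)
Your reduction of both directions to the single inequality
\[
L_k(\p) - L_j(\p) + \sum_{\substack{i<k\\ i\ne j}}\bigl(O_{ik}-O_{ij}\bigr)\ \ge\ 0 \qquad (j<k)
\]
is correct, and the reverse-induction scheme for the necessary direction is exactly how the paper proceeds. The problem lies entirely in your relabeling step.

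The claim that \emph{any} depth-first traversal of the ultrametric tree works is false. Take $M=2$ with $L_1=1$, $L_2=4$, $O_{12}=1$. The tree has a single root with two leaves, and a DFS can visit them in either order. If the traversal puts the branch with $L=4$ first and the branch with $L=1$ second, then for $j=1$, $k=2$ your sum over $i<k$, $i\ne j$ is empty and the inequality reduces to $L_2(\p)\ge L_1(\p)$ with the \emph{new} labels, i.e.\ $\lfloor 3(1-1/\p)\rfloor \ge \lfloor 9(1-1/\p)\rfloor$, which fails for every $\p>1$. So the ordering must be sensitive to the values $L_j(\p)$, not just to the ultrametric structure of the $O_{ij}$; the paper's Example in Section~7.2 shows the correct ordering even depends on $\p$.

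The paper avoids the entire tree/ultrametric apparatus with a one-line greedy relabeling: recursively choose the index $m$ (among those not yet placed) that maximizes
\[
L_m(\p) + \sum_{\substack{j\ne m\\ j\le m}} O_{mj},
\]
then repeat on the remaining $m-1$ indices. By construction this gives, for every $k<m$,
\[
L_m(\p) + \sum_{i<m} O_{im} \ \ge\ L_k(\p) + \sum_{\substack{i\le m\\ i\ne k}} O_{ik},
\]
which is exactly your key inequality (after the cancellation you already carried out). No ultrametric property, no trees, no sub-induction on depth is needed; the inequality is immediate from the definition of the ordering. Replace your DFS paragraph with this greedy choice and the rest of your argument goes through unchanged.
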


Theorem \ref{uberthm} is an immediate consequence.  
We do not see any way to avoid relabelling the $c_k$
or, what amounts to the same thing, relabelling the
branches of $P$.  The relabelling actually depends on $\p$
as shown in Section \ref{pexample} below.
We first prove the case $M=3$ to illustrate, because
the general proof is inductive and somewhat opaque.

\subsection{Working out the multiplicity three case}
Assume $M=3$ in Proposition \ref{simplec}.
We can relabel the three branches of $P$ 
in order to arrange that
\[
L_3(\p) + O_{31} + O_{32} \geq L_2(\p) + O_{21} + O_{23} ,\  L_1(\p) +O_{12} + O_{13}
\]
and then relabel $1,2$ potentially to arrange
\[
L_2(\p) \geq L_1(\p).
\]
Checking our order of vanishing conditions from Proposition \ref{ccond} with $j=M=3$, we first see 
\[
\frac{c_3(x)}{(a_3(x;t) - a_1(x;t))(a_3(x;t) - a_2(x;t))}
\]
vanishes to order at least $L_3(\p)$.
Thus, $c_3(x)$ vanishes to order at least $L_3(\p)+O_{13}+O_{23}$, again using
\[
\text{Ord}(a_i(x;t)- a_j(x;t)) = O_{ij}
\]
for $t$ proper.

For the $j=2$ term in Proposition \ref{ccond} we see
\[
\frac{c_2(x)}{a_2(x;t)-a_1(x;t)} + \frac{c_3(x)}{(a_2(x;t)-a_1(x;t))(a_2(x;t)-a_3(x;t))}
\]
vanishes to order at least $L_2(\p)$.
The second term above vanishes to order 
\[
\begin{aligned}
\text{Ord}(c_3) - O_{21} - O_{23} &\geq L_3(\p) + O_{13} + O_{23} - O_{21} - O_{23} \\
&\geq L_2(\p)+O_{21}+O_{23} - O_{21}-O_{23} \\
&=
L_2(\p)
\end{aligned}
\]
and in order for the sum to vanish to at least this order we must have the first term
\[
\frac{c_2(x)}{a_2(x;t)-a_1(x;t)},
\]
which vanishes to order $\text{Ord}(c_2)- O_{21}$,
vanishing to at least order $L_2(\p)$.  Namely,
$c_2(x)$ vanishes to order at least $L_2(\p) + O_{21}$.
Finally, we must have
\[
c_1(x) + \frac{c_2(x)}{a_1(x;t)-a_2(x;t)} + \frac{c_3(x)}{(a_1(x;t) - a_2(x;t))(a_1(x;t)-a_3(x;t))}
\]
vanishing to order at least $L_1(\p)$.
The third term vanishes to order
\[
\begin{aligned}
\text{Ord}(c_3) - O_{12} - O_{13} &\geq L_3(\p)+O_{31}+O_{32} - O_{12} - O_{13}\\
& \geq L_1(\p)+O_{12}+O_{13} - O_{12}-O_{13} \\
&\geq L_1(\p)
\end{aligned}
\]
and the second term vanishes to order
\[
\text{Ord}(c_2) - O_{12} \geq L_2(\p)+O_{21} - O_{12} \geq L_1(\p)
\]
and therefore $c_1(x)$ vanishes to order $L_1(\p)$.

\subsection{Multiplicity three example} \label{pexample}

The following example shows that our
procedure of ordering branches in the previous section
depends on $\p$.

\begin{example}
Consider $P = (y+ix^2)(y+x+ix^8)(y+ix^4)$ with the branches ordered
as shown.  
Note that
\[
2L_1= 2,\ 2L_2 = 8,\ 2L_3 = 4
\]
and
\[
O_{12} = 1,\ O_{13} = 2,\ O_{23} = 1.
\]
We compute
\[
L_1(\p) + O_{12} + O_{13} = \lfloor(3)(1-1/\p)\rfloor +3
\]
\[
L_2(\p)  + O_{21} + O_{23} = \lfloor (9)(1-1/\p)\rfloor+ 2
\]
\[
L_3(\p) + O_{31}+O_{32} = \lfloor(5)(1-1/\p)\rfloor+ 3.
\]
For $\p = 5/4$, we obtain $3, 3, 4$ and so the three branches
are in a valid ordering for the previous procedure.
On the other hand, for $\p=3$ we obtain the three values $5, 8, 6$
and the branches are not in a valid ordering for the procedure
in the previous section. 
Again for $\p=5/4$, we have $L_2(5/4) = 1 > L_1(5/4) = 0$ and the
two remaining branches are also in the correct ordering.
For $\p = 3$, we have $L_3(3) = 3 > L_1(3) = 2$ so the proper
ordering of branches for $\p=3$ is $(y+ix^2), (y+ix^4), (y+x+ix^8)$.

Let us find bases for $\mcI^{5/4}_{P}/(P,\refl{P})$ and $\mcI^{3}_{P}/(P,\refl{P})$.

Given a generic factoring $A+tB = (y-a_1(x;t))(y-a_2(x;t))(y-a_3(x;t))$,
meaning 
\[
a_1(x;t) = \alpha_1 x^2 + O(x^3)\ (\text{for } \alpha_1 \ne 0),\ a_2(x;t) = -x + O(x^6),\ a_3(x;t) = O(x^4)
\]
we have
\[
F_1(x,y) = (y-a_2(x;t)(y-a_3(x;t)),\ F_2(x,y) = (y-a_3(x;t)),\ F_3(x,y) = 1
\]
and basis for $\C\{x,y\}/(P,\refl{P})$ given by
\[
x^i F_1 \text{ where } 0\leq i < 5,\ x^i F_2 \text{ where } 0 \leq i< 10,\ x^i F_3 \text{ where } 0\leq i<7.
\]
By the previous section, $Q/P \in L^{5/4}_{loc}$
if and only if $Q$ has a representative in the quotient $\C\{x,y\}/(P,\refl{P})$
of the form
\[
c_1(x) F_1 + c_2(x) F_2 + c_3(x) F_3
\]
where $c_1$ vanishes to order at least $L_1(5/4) = 0$ and has degree less than 5,
$c_2$ vanishes to order at least $L_2(5/4) + O_{21} = 3$ and has degree less than 10,
$c_3$ vanishes to order at least $L_3(5/4) + O_{31} + O_{32} = 4$ and has degree less than 7.
Together we obtain the dimension
\[
5 + (10-3) + (7-4) = 15
\]
for $\mcI^{5/4}_{P}/(P,\refl{P})$.

On the other hand, for $\p = 3$ we cannot use the given ordering of branches.
So, instead we work with
\[
G_1 = (y-a_2(x;t))(y-a_3(x;t)),\ G_2 = (y-a_2(x;t)),\ G_3 = 1
\]
and obtain the basis
\[
x^i G_1 \text{ where } 0\leq i < 5,\ x^i G_2 \text{ where } 0 \leq i< 7,\ x^i G_3 \text{ where } 0\leq i<10.
\]
for $\C\{x,y\}/(P,\refl{P})$.  
By the previous section, $Q/P \in L^3_{loc}$ if and only if 
$Q$ has a representative in the quotient $\C\{x,y\}/(P,\refl{P})$
of the form
\[
c_1(x) G_1 + c_2(x) G_2 + c_3(x) G_3
\]
where $c_1$ vanishes to order at least $L_1(3) = 2$ and has degree less than 5,
$c_2$ vanishes to order at least $L_3(3) + O_{13} = 5$ and has degree less than 7,
$c_3$ vanishes to order at least $L_2(3) + O_{23} + O_{21} = 8$ and has degree less than 10.
Together we obtain the dimension
\[
(5-2) + (7-5) + (10-8) = 7
\]
for $\mcI^{3}_{P}/(P,\refl{P})$. 

To illustrate Theorem \ref{Kmax} we note that for $K = \max\{2L_1,2L_2,2L_3\} = 8$
we have $\mcI^{K+1}_{P} = \mcI^{9}_{P} = \mcI^{\infty}_{P}$.  
The dimension of $\mcI^{\infty}_{P}/(P,\refl{P})$ is $O_{12}+O_{13}+O_{23} = 4$
according to Theorem \ref{lpdim}.
The ordering of branches as in the case $\p=3$ is the correct ordering
for $\p\geq 9$ and we obtain that $Q/P \in L^{\infty}_{loc}$ if and only
if $Q$ has a representative in the quotient $\C\{x,y\}/(P,\refl{P})$ of
the form
\[
c_1(x) G_1 + c_2(x) G_2 + c_3(x) G_3
\]
where $c_1$ vanishes to order at least $L_1(9) = 2$ and has degree less than 5,
$c_2$ vanishes to order at least $L_3(9) + O_{13} = 6$ and has degree less than 7,
$c_3$ vanishes to order at least $L_2(9) + O_{23} + O_{21} = 10$ and has degree less than 10.
We obtain the dimension of $4$ as expected.  Note this implies $c_3(x) \equiv 0$ and the
three basis elements are simply
\[
x^2 G_1, x^3 G_1, x^4 G_1, x^6 G_2. \qquad \diamond
\]

\end{example}

\subsection{The general case}

For the general case, we relabel the indices of $1,\dots, M$ so that the following hold
\[
L_M(\p)  + \sum_{j:j\ne M} O_{M,j} \geq L_k(\p) + \sum_{j:j \ne k} O_{k,j} \text{ for } k \leq M
\]
and we recursively relabel so that more generally for $m\leq M$
\[
L_m(\p) + \sum_{j:j\ne m, j\leq m} O_{m,j} \geq L_k(\p) + \sum_{j:j\ne k, j \leq m} O_{k,j} \text{ for } k \leq m.
\]
Now the condition we are working with is that for $j=1,\dots, M$
\[
\sum_{k=j}^{M} c_k(x) \frac{1}{\prod_{i: i\ne j, i\leq k} (a_j(x;t)-a_i(x;t))}
\]
vanishes to order at least $L_j(\p)$.  

We now prove Proposition \ref{simplec} working backwards
from $k=M$. We know
\[
\frac{c_M(x)}{\prod_{i:i<M} (a_M(x;t)-a_i(x;t))}
\]
vanishes to order at least $L_M(\p)$ and therefore
$c_M(x)$ vanishes to order at least
\[
L_M(\p) + \sum_{i:i<M} O_{M,i}.
\]
Next, assume $c_{k}(x)$ vanishes to order at least $L_k(\p)+ \sum_{j:j<k} O_{kj}$
for $k= M, M-1,\dots, M-m$ where $m \geq 0$.
We will establish the claim for $k=M-m-1$. 
We know 
\begin{equation} \label{cjsum} 
\sum_{j=M-m-1}^{M} c_j(x) \frac{1}{\prod_{\overset{i\ne M-m-1}{i\leq j}} (a_{M-m-1}(x;t)-a_i(x;t))}
\end{equation}
vanishes to order at least $L_{M-m-1}(\p)$.  
For $j>M-m-1$ the term
\[
c_j(x) \frac{1}{\prod_{\overset{i\ne M-m-1}{i\leq j}} (a_{M-m-1}(x;t)-a_i(x;t))}
\]
vanishes to order at least
\[
\text{Ord}(c_j(x)) - \sum_{\overset{i\ne M-m-1}{i\leq j}} O_{i,M-m-1}
\geq L_j(\p) + \sum_{i:i<j} O_{ji} - \sum_{\overset{i\ne M-m-1}{i\leq j}} O_{i,M-m-1} \geq L_{M-m-1}(\p).
\]
Since \eqref{cjsum} vanishes to order at least $L_{M-m-1}(\p)$ 
by Proposition \ref{ccond} 
and all of the summands with $j>M-m-1$
vanish to at least this order, we must have 
\[
c_{M-m-1}(x) \frac{1}{\prod_{\overset{i\ne M-m-1}{i\leq M-m-1}} (a_{M-m-1}(x;t)-a_i(x;t))}
\]
vanishes to at least order $L_{M-m-1}(\p)$ and therefore $c_{M-m-1}(x)$ vanishes
to order at least
\[
L_{M-m-1}(\p) + \sum_{i< M-m-1} O_{M-m-1,i}
\]
as desired.
This proves Proposition \ref{simplec} and along
with it our main theorem, Theorem \ref{uberthm}.

\section{Appendix: Motivation from rational inner functions and sums of squares} \label{sec:sos}
This section discusses a specific problem
that led us to study the local $L^2$ integrability
question.  This section is purely here for motivation.

One natural motivation for the local $L^2$ theory for
rational functions comes from a circle of ideas around
polynomials with no zeros on the bidisk, associated sums of squares
decompositions, and associated determinantal representations.
This motivation is \emph{natural} because at the outset there is
no apparent connection to integrability yet it arises nonetheless.
The setup begins with a polynomial $p \in \C[z,w]$ with no zeros
on the bidisk $\D^2$.  The reflected polynomial here is
\[
\tilde{p}(z,w) = z^{n} w^{m} \overline{p(1/\bar{z}, 1/\bar{w})}
\]
where $(n,m)$ is the bidegree of $p$.  
We assume $p$ and $\tilde{p}$ have no common factors.
The function $\tilde{p}/p$ is rational, analytic on $\D^2$, 
has modulus less than $1$ in $\D^2$,
and has modulus $1$ on $\T^2$ outside of any boundary zeros.
The inequality $|\tilde{p}/p|\leq1$ on $\D^2$ can be certified via a special
sums of squares formula
\begin{align} \label{sos}
&|p(z,w)|^2 - |\tilde{p}(z,w)|^2 \\
&= (1-|z|^2)\sum_{j=1}^{n} |A_j(z,w)|^2 + (1-|w|^2)\sum_{j=1}^{m} |B_j(z,w)|^2
\end{align}
where $A_1,\dots, A_n, B_1,\dots, B_m \in \C[z,w]$. See \cite{gKpnozb}.
The sums of squares formula can be used to establish
a variety of useful formulas for $\tilde{p}/p$ (a transfer function 
realization formula) and $p$ (a determinantal representation).

To make a long story short, the polynomials in the sums of squares can be constructed
from Hilbert space 
operations by viewing polynomials as an inner product space with inner product
\[
\langle A, B\rangle = \int_{\T^2} \frac{A \bar{B}}{|p|^2} d\sigma \qquad (d\sigma = \text{ normalized Lebesgue measure})
\]
but for this to make sense we must have the constraint that $A/p, B/p \in L^2(\T^2)$.  
The story gets richer because the terms in \eqref{sos} need
not be unique even after taking into account unitary transformations applied
to the tuples $(A_1,\dots, A_n), (B_1,\dots, B_m)$.
The formula \eqref{sos} is minimal in the sense that the number of squares
required for each sum of squares term cannot be lowered from the 
bidegree $(n,m)$.
The sums of squares decomposition \emph{is} unique exactly when the following space
\[
\mathcal{K} = \{ A\in \C[z,w]: A/p\in L^2(\T^2), \deg_z A < n, \deg_w A < m\}
\]
is trivial.
There is an operator theoretic way to construct or determine all
of the minimal sum of squares decompositions. 
We can define a pair of commuting contractive matrices
on $\mathcal{K}$ via $T_1 = P_{\mathcal{K}} M_{z}|_{\mathcal{K}}$, $T_2^* = P_{\mathcal{K}} M^*_{w}|_{\mathcal{K}}$ where $M_z,M_w$ are the 
multiplication operators in $L^2(\frac{d\sigma}{|p|^2}, \T^2)$ and $P_{\mathcal{K}}$
is orthogonal projection onto $\mathcal{K}$ .  
It is non-trivial
to prove that $T_1, T_2^*$ commute.  
It is shown in \cite{gKintreg} (see also the more general work in \cite{BSV}) 
that
there is a 1-1 correspondence between joint invariant
subspaces of $(T_1,T_2^*)$ and minimal sums of squares
formulas for $p$.

Thus, it is of interest to understand $\mathcal{K}$ and closely related
spaces better.
In \cite{gKintreg}, the exact dimension was computed as
\[
\dim(\mathcal{K}) = nm - \frac{1}{2} N_{\T^2}(p,\tilde{p})
\]
where $N_{\T^2}(p,\tilde{p})$ is notation for the number of common 
zeros of $p$ and $\tilde{p}$ on $\T^2$ counted via intersection multiplicity.
In the course of determining this dimension, the larger space
\begin{equation} \label{largerspace}
\mathcal{K}_1 = \{ A\in \C[z,w]: A/p\in L^2(\T^2)\}
\end{equation}
was also studied and a concrete set of generators was determined
(essentially built from the sums of squares terms from earlier and
their reflected versions).
The proof surprisingly relied on the ``global'' B\'ezout's theorem 
adapted to $\mathbb{P}^1\times \mathbb{P}^1$
and we were able to avoid giving a direct local 
description of \eqref{largerspace}.  That problem
is remedied in the present paper.
All of this motivation is here to say that understanding $\mathcal{K}_1$
is interesting because of a problem that is not obviously
related.

\section{Frequently used theorems and definitions} \label{secfreq}

Here are the most frequently referenced theorems and definitions
gathered in one place.

\brackthm*

\IP*

\Onq*

\Oij*

\finalint*

\lpdim*

\facebranches*

\matchbranches*

\proper*

\Bvanish*

\Ijtp*

\onevarlp*

\ltwochar*

\lpchar*

\basis*

\uberthm*

\begin{bibdiv}\begin{biblist}

\bib{Anderson}{article}{
   author={Anderson, John T.},
   author={Bergqvist, Linus},
   author={Bickel, Kelly},
   author={Cima, Joseph A.},
   author={Sola, Alan A.},
   title={Clark measures for rational inner functions II: general bidegrees
   and higher dimensions},
   journal={Ark. Mat.},
   volume={62},
   date={2024},
   number={2},
   pages={331--368},
   issn={0004-2080},
   review={\MR{4816871}},
   doi={10.4310/arkiv.2024.v62.n2.a2},
}

\bib{BSV}{article}{
   author={Ball, Joseph A.},
   author={Sadosky, Cora},
   author={Vinnikov, Victor},
   title={Scattering systems with several evolutions and multidimensional
   input/state/output systems},
   journal={Integral Equations Operator Theory},
   volume={52},
   date={2005},
   number={3},
   pages={323--393},
   issn={0378-620X},
   review={\MR{2184571}},
   doi={10.1007/s00020-005-1351-y},
}

\bib{LCTsurvey}{article}{
   author={Benito, Ang\'{e}lica},
   author={Faber, Eleonore},
   author={Smith, Karen E.},
   title={Measuring singularities with Frobenius: the basics},
   conference={
      title={Commutative algebra},
   },
   book={
      publisher={Springer, New York},
   },
   date={2013},
   pages={57--97},
   review={\MR{3051371}},
   doi={10.1007/978-1-4614-5292-8\textunderscore3},
}

\bib{BH}{article}{
  author={Bickel, Kelly},
  author={Hong, Yang},
  title={Stable Polynomials via Undirected Colored Graphs},
  date={2025},
  note={arXiv:2506.12624 [math.CV]; \url{https://arxiv.org/abs/2506.12624}},
}

\bib{BKPS}{article}{
   author={Bickel, Kelly},
   author={Knese, Greg},
   author={Pascoe, James Eldred},
   author={Sola, Alan},
   title={Local theory of stable polynomials and bounded rational functions
   of several variables},
   journal={Ann. Polon. Math.},
   volume={133},
   date={2024},
   number={2},
   pages={95--169},
   issn={0066-2216},
   review={\MR{4839351}},
   doi={10.4064/ap240229-9-9},
}

\bib{BKPS2}{article}{
   author={Bickel, Kelly},
   author={Knese, Greg},
   author={Pascoe, James Eldred},
   author={Sola, Alan},
   title={Stable polynomials and admissible numerators in product domains},
   journal={Bull. Lond. Math. Soc.},
   volume={57},
   date={2025},
   number={2},
   pages={377--394},
   issn={0024-6093},
   review={\MR{4861887}},
   doi={10.1112/blms.13201},
}

\bib{BPS18}{article}{
   author={Bickel, Kelly},
   author={Pascoe, James Eldred},
   author={Sola, Alan},
   title={Derivatives of rational inner functions: geometry of singularities
   and integrability at the boundary},
   journal={Proc. Lond. Math. Soc. (3)},
   volume={116},
   date={2018},
   number={2},
   pages={281--329},
   issn={0024-6115},
   review={\MR{3764062}},
   doi={10.1112/plms.12072},
}

\bib{BPS20}{article}{
   author={Bickel, Kelly},
   author={Pascoe, James Eldred},
   author={Sola, Alan},
   title={Level curve portraits of rational inner functions},
   journal={Ann. Sc. Norm. Super. Pisa Cl. Sci. (5)},
   volume={21},
   date={2020},
   pages={449--494},
   issn={0391-173X},
   review={\MR{4288609}},
}

\bib{BPSAJM}{article}{
   author={Bickel, Kelly},
   author={Pascoe, James Eldred},
   author={Sola, Alan},
   title={Singularities of rational inner functions in higher dimensions},
   journal={Amer. J. Math.},
   volume={144},
   date={2022},
   number={4},
   pages={1115--1157},
   issn={0002-9327},
   review={\MR{4461957}},
   doi={10.1353/ajm.2022.0025},
}
		
\bib{Collins}{article}{
   author={Collins, Tristan C.},
   title={Log-canonical thresholds in real and complex dimension 2},
   language={English, with English and French summaries},
   journal={Ann. Inst. Fourier (Grenoble)},
   volume={68},
   date={2018},
   number={7},
   pages={2883--2900},
   issn={0373-0956},
   review={\MR{3959098}},
}

\bib{debranges}{book}{
   author={de Branges, Louis},
   title={Hilbert spaces of entire functions},
   publisher={Prentice-Hall, Inc., Englewood Cliffs, NJ},
   date={1968},
   pages={ix+326},
   review={\MR{0229011}},
}

\bib{Dirichletbook}{book}{
   author={Defant, Andreas},
   author={Garc\'{\i}a, Domingo},
   author={Maestre, Manuel},
   author={Sevilla-Peris, Pablo},
   title={Dirichlet series and holomorphic functions in high dimensions},
   series={New Mathematical Monographs},
   volume={37},
   publisher={Cambridge University Press, Cambridge},
   date={2019},
   pages={xxvii+680},
   isbn={978-1-108-47671-3},
   review={\MR{3967103}},
   doi={10.1017/9781108691611},
}

\bib{Fulton}{book}{
   author={Fulton, William},
   title={Algebraic curves},
   series={Advanced Book Classics},
   note={An introduction to algebraic geometry;
   Notes written with the collaboration of Richard Weiss;
   Reprint of 1969 original},
   publisher={Addison-Wesley Publishing Company, Advanced Book Program,
   Redwood City, CA},
   date={1989},
   pages={xxii+226},
   isbn={0-201-51010-3},
   review={\MR{1042981}},
}

\bib{GIK}{article}{
   author={Geronimo, Jeffrey S.},
   author={Iliev, Plamen},
   author={Knese, Greg},
   title={Polynomials with no zeros on a face of the bidisk},
   journal={J. Funct. Anal.},
   volume={270},
   date={2016},
   number={9},
   pages={3505--3558},
   issn={0022-1236},
   review={\MR{3475462}},
   doi={10.1016/j.jfa.2016.02.002},
}

\bib{Greenblatt}{article}{
   author={Greenblatt, Michael},
   title={Newton polygons and local integrability of negative powers of
   smooth functions in the plane},
   journal={Trans. Amer. Math. Soc.},
   volume={358},
   date={2006},
   number={2},
   pages={657--670},
   issn={0002-9947},
   review={\MR{2177034}},
   doi={10.1090/S0002-9947-05-03664-0},
}

\bib{hyperbolic}{article}{
   author={Harvey, F. Reese},
   author={Lawson, H. Blaine, Jr.},
   title={G\aa rding's theory of hyperbolic polynomials},
   journal={Comm. Pure Appl. Math.},
   volume={66},
   date={2013},
   number={7},
   pages={1102--1128},
   issn={0010-3640},
   review={\MR{3055586}},
   doi={10.1002/cpa.21443},
}

\bib{gKpnozb}{article}{
   author={Knese, Greg},
   title={Polynomials with no zeros on the bidisk},
   journal={Anal. PDE},
   volume={3},
   date={2010},
   number={2},
   pages={109--149},
   issn={2157-5045},
   review={\MR{2657451}},
   doi={10.2140/apde.2010.3.109},
}

\bib{gKintreg}{article}{
   author={Knese, Greg},
   title={Integrability and regularity of rational functions},
   journal={Proc. Lond. Math. Soc. (3)},
   volume={111},
   date={2015},
   number={6},
   pages={1261--1306},
   issn={0024-6115},
   review={\MR{3447794}},
   doi={10.1112/plms/pdv061},
}

\bib{semi}{article}{
   author={Knese, Greg},
   title={Determinantal representations of semihyperbolic polynomials},
   journal={Michigan Math. J.},
   volume={65},
   date={2016},
   number={3},
   pages={473--487},
   issn={0026-2285},
   review={\MR{3542761}},
   doi={10.1307/mmj/1472066143},
}

\bib{kollar}{article}{
      title={Bounded meromorphic functions on the complex 2-disc}, 
      author={J\'anos Koll\'ar},
      year={2023},
      journal = {Periodica Mathematica Hungarica},
      volume = {19},
      
}

\bib{LS}{article}{
   author={Lyubarskii, Yurii I.},
   author={Seip, Kristian},
   title={Complete interpolating sequences for Paley-Wiener spaces and
   Muckenhoupt's $(A_p)$ condition},
   journal={Rev. Mat. Iberoamericana},
   volume={13},
   date={1997},
   number={2},
   pages={361--376},
   issn={0213-2230},
   review={\MR{1617649}},
   doi={10.4171/RMI/224},
}

\bib{PS}{article}{
   author={Phong, D. H.},
   author={Stein, E. M.},
   title={The Newton polyhedron and oscillatory integral operators},
   journal={Acta Math.},
   volume={179},
   date={1997},
   number={1},
   pages={105--152},
   issn={0001-5962},
   review={\MR{1484770}},
   doi={10.1007/BF02392721},
}

\bib{Pramanik}{article}{
   author={Pramanik, Malabika},
   title={Convergence of two-dimensional weighted integrals},
   journal={Trans. Amer. Math. Soc.},
   volume={354},
   date={2002},
   number={4},
   pages={1651--1665},
   issn={0002-9947},
   review={\MR{1873022}},
   doi={10.1090/S0002-9947-01-02939-7},
}

\bib{Rudin}{book}{
   author={Rudin, Walter},
   title={Function theory in polydiscs},
   publisher={W. A. Benjamin, Inc., New York-Amsterdam},
   date={1969},
   pages={vii+188},
   review={\MR{0255841}},
}

\bib{Tully}{article}{
  author={Tully-Doyle, Ryan},
  author={Adlin, Lily},
  author={Thai, Giovani},
  author={Tiscareno, Samuel},
  title={Pick Functions as Cauchy Transforms of Graphs},
  date={2024},
  note={arXiv:2410.10695 [math.CV]; \url{https://arxiv.org/abs/2410.10695}},
}

\end{biblist}\end{bibdiv}

\end{document}